\def\Cal{\mathcal}
\let\phi\varphi
\newcommand{\into}{\hookrightarrow}
\newcommand{\al}{\alpha}
\renewcommand{\th}{\theta}
\newcommand{\si}{\sigma}
\newcommand{\ze}{\zeta}
\newcommand{\Ga}{\Gamma}
\newcommand{\bdet}{\textbf{det}}
\newcommand{\cD}{\mathcal{D}}
\newcommand{\cE}{\mathcal{E}}
\newcommand{\cT}{\mathcal{T}}
\newcommand{\rank}{\operatorname{rank}}
\newcommand{\rpl}                         % +) or <+
{\mbox{$
\begin{picture}(12.7,8)(-.5,-1)
\put(0,0.2){$+$}
\put(4.4,3.1){\oval(8,8)[r]}
\end{picture}$}}
\newcommand*{\isoarrow}[1]{\arrow[#1,"\rotatebox{90}{\(\sim\)}"
]}
\newcommand{\thorn}{\text{\slshape\itshape\th}}
\newtheorem{theorem}{Theorem}[section]
\newtheorem{lemma}[theorem]{Lemma}
\newtheorem{proposition}[theorem]{Proposition}
\newtheorem{corollary}[theorem]{Corollary}
\newtheorem{thm-and-def}[theorem]{Theorem and Definition}
\theoremstyle{definition}
\newtheorem{remark}[theorem]{Remark}
\newtheorem{notation-and-rem}[theorem]{Notation and Remark}
\newcommand{\bp}{\boldsymbol{p}}
\newcommand{\bc}{\boldsymbol{c}}
\def\sideremark#1{\ifvmode\leavevmode\fi\vadjust{\vbox to0pt{\vss% the remark
 \hbox to 0pt{\hskip\hsize\hskip1em%                          will appear only
 \vbox{\hsize3cm\tiny\raggedright\pretolerance10000%          on the side
  \noindent #1\hfill}\hss}\vbox to8pt{\vfil}\vss}}}%
\begin{document}
\title{Metrics in projective differential geometry: the geometry of solutions
  to the metrizability equation}

\author{Keegan Flood and A.\ Rod Gover}

\address{K.F. \& A.R.G.:Department of Mathematics\\
  The University of Auckland\\
  Private Bag 92019\\
  Auckland 1142\\
  New Zealand} 
\email{keegan.flood@auckland.ac.nz}
\email{r.gover@auckland.ac.nz}

\begin{abstract} 
Pseudo-Riemannian metrics with Levi-Civita connection in the
projective class of a given torsion free affine connection can be
obtained from (and are equivalent to) the maximal rank solutions of a
certain overdetermined projectively invariant differential equation
often called the metrizability equation. Dropping this rank assumption
we study the solutions to this equation given less restrictive generic
conditions on its prolonged system. In this setting we find that the
solution stratifies the manifold according to the strict signature
(pointwise) of the solution and does this in way that locally
generalizes the stratification of a model, where the model is, in each
case, a corresponding Lie group orbit decomposition of the
sphere. Thus the solutions give curved generalizations of such
embedded orbit structures. We describe the smooth nature of the strata
and determine the geometries of each of the different strata types;
this includes a metric on the open strata that becomes singular at the
strata boundary, with the latter a type of projective infinity for the
given metric. The approach reveals and exploits interesting highly non-linear relationships
between different linear geometric partial differential equations.
Apart from their direct significance, the results show
that, for the metrizability equation, strong results arising for so-called normal
BGG solutions, and the corresponding projective holonomy reduction, extend to a far wider
class of solutions. The work also provides new results for the
projective compactification of scalar-flat metrics.
\end{abstract}

\subjclass[2010]{Primary 53A20, 53B10, 53C21; Secondary 35N10, 53A30, 58J60}

\thanks{Both authors gratefully acknowledge support from the Royal
  Society of New Zealand via Marsden Grants 13-UOA-018 and 16-UOA-051.}

\maketitle

\section{Introduction}\label{intro}

On geometric manifolds the natural overdetermined partial differential
equations govern a variety of key phenomena including symmetry (such
as the Killing equation on infinitesimal isometries), so-called hidden
symmetries, and also many equations directly governing geometric
structure \cite{BMMR,CG2,EM,GM,Jez,Pap}. It turns out that a solution
of such an equation can often stratify the manifold in an important
way. A problem of classical interest is to determine the possible zero
locus of solutions of the Killing equation, the conformal Killing
equation, and related symmetry equations
\cite{BL,Derd,Leit2,Lis}.
In cases there is some
relation to the determination of nodal sets (as for eigenfunctions of
Laplacian cf. \cite{Z3,Z1,Z2}) but in general there are also important
differences because of the greater number of equations controlling the
solution. Indeed, for a given overdetermined PDE and solution thereof,
there is potentially very rich information available; for example
  the various strata on the same given manifold can
encode different geometries that are strikingly different (see
e.g.\ \cite{GPW}). Evidently, in such cases the solution smoothly
relates these different geometries and so can be used as a tool for
studying one in terms of the other in the spirit of the geometric
holography program (cf.\ \cite{FG,GLW,GZ,Juh,MM}). 
Thus given a
particular overdetermined partial differential equation on a given
manifold the first important problems for solutions (or perhaps some
distinguished class of solutions) are: (i) To determine the nature of
the strata, e.g. are they smoothly embedded submanifolds of some
dimension or rather more complicated variety type structures? (ii) To
determine in detail any geometries induced on the different
strata. (iii) To understand how the geometries on neighboring strata
are related.

Toward capturing the nature of the strata, the geometries they
determine, as well as the relation between these,  rather general
results are available in \cite{CGH1} and \cite{CGH2}. For a vast class of
overdetermined linear partial differential equations and solutions
thereof, the so-called {\em normal solutions} of {\em first BGG equations}, these sources show
that the stratifications must be locally diffeomorphic to
stratifications arising in simpler model cases and moreover the
different strata carry Cartan geometries that are, in a precise way,
curved analogues of the Klein (i.e. homogeneous) geometries on the
corresponding strata of the model.  However at this stage it seems the approach in these sources does
not extend beyond these special normal solutions (these are
solutions that correspond to suitable Cartan holonomy reductions). The
question then arises as to whether similar results might be available
for more general solutions.

On an $n$-manifold $M$ with an affine connection $\nabla$ an interesting question is
whether there is a metric on $M$ with the same geodesics, up to
reparametrization, as $\nabla$. Here and throughout $n\geq 2$.
It is a result of Mikes and Sinjukov
 \cite{Mik,Sinj} that this non-linear problem
can be recast in terms of an equivalent linear PDE problem: there is such a
metric if and only if there is a rank-$n$ symmetric contravariant
2-tensor $\zeta^{bc}$ that satisfies the equation

\begin{equation}\label{meteq}
\operatorname{trace-free} (\nabla_a \zeta^{bc}) = 0,
\end{equation}

\noindent where we employ an obvious (abstract) index notation. In the
case that there exists such a full rank solution of (\ref{meteq}) then
the corresponding (inverse) metric is given by
$g^{bc}=\operatorname{sgn}(\tau) \tau \zeta^{bc}$, where $\tau
\colonequals \bdet(\ze)$ is a suitable determinant of $\zeta$ defined
in expression (7). This equation and surrounding questions have been
the subject of intense recent interest and considerable progress
\cite{BM, BDE, DE, EM, FM, GM, KM, Mat2, MR, Mettler}, and there is
growing interest in the related c-projective analogue see
e.g. \cite{CEMN, Mettler2}.

In the current article we study the solutions of this {\em
  metrizability equation} (\ref{meteq}). Given its interpretation the
first important issue for any solution $\zeta$ is the nature of its
{\em degeneracy locus} $\cD(\zeta)$, that is the set of points where
the rank of $\zeta$ is less than $n$. On such a set there is in
general no metric, but, as we shall see, (given mild restrictions)
there is interesting geometry and one of our aims is to determine this
and also an understanding of how it arises from the  ambient metric
which is available on the open set where $\zeta$ has maximal
rank. This ambient metric is singular along $\cD(\zeta)$.  The problem
is of direct interest because of strong links with the program of
projectively compactifying complete non-compact Riemannian and
pseudo-Riemannian metrics, as developed and studied in
\cite{CG1,CG2,CG3}. Indeed, although ostensibly we study a different
problem, the current article provides a new perspective on the
projective compactification of metrics with scalar curvature pointwise
bounded away from zero and strong new results for the projective
compactification of scalar-flat metrics (see Corollary \ref{4.16}). In
addition to these motivations, the equation (\ref{meteq}) is also an
important ``test case'' for the general problem mentioned in the first
paragraph.

Throughout affine connections will be assumed torsion free. Two such
connections $\nabla$ and $\nabla'$ are said to be {\em projectively
  related} if they share the same geodesics as unparametrized
curves. An equivalence class $\bp:=[\nabla]$ of such connections is
termed a {\em projective structure} and a manifold $M$ equipped with
such an equivalence class, written $(M,\bp)$, is a {\em projective
  manifold}. The equation (\ref{meteq}) is a {\em projectively
  invariant} meaning that, when interpreted correctly, it descends to
a well defined equation on projective manifolds $(M,\bp)$, even though 
there is in general no distinguished affine connection in $\bp$.

Overdetermined equations are typically best studied by some form of
differential prolongation where new variables are introduced to
produce a first order closed system (see e.g. \cite{BCEG}).  Because equation
(\ref{meteq}) is projectively invariant this prolonged system is
handled naturally by the projective tractor calculus as presented in \cite{BEG}. In
fact (\ref{meteq}) falls into the class of {\em first BGG} equations
\cite{CSS,CSouc,CGH1,CGH2}. Associated with any first BGG
equation there is a canonical invariant differential operator called a
(first) BGG splitting operator which, informally speaking, maps the
domain section to its prolonged variable system.  In particular in
this case there is a projectively invariant second order operator
$\zeta\mapsto L(\zeta)$ and $L(\zeta)$ takes values in the second
symmetric power of the standard projective tractor bundle
$S^2\cT$. The solution $\zeta$ is {\em normal} if $L(\zeta)$ is
parallel for the tractor connection, but here we do {\em not} restrict to
normal solutions.  These objects are introduced in Sections \ref{background} and \ref{BGGsec} below, but the important thing at this stage is that they are canonically associated to the projective manifold and on an $n$-manifold the standard tractor bundle
$\cT$ has rank $n+1$. For sections of $S^2\cT$ there is a canonical
(projectively invariant) determinant available and so it is natural to
consider the composition of this with the $L(\zeta)$
\begin{equation}\label{det}
\zeta\mapsto L(\zeta)\mapsto \det L(\zeta). 
\end{equation}
Now a key point. If $\zeta$ is a maximal rank solution of
(\ref{meteq}) then $\det (L(\zeta))$ is, up to a non-zero constant, a
multiple of the scalar curvature of the corresponding metric $g$
with inverse $g^{-1}=\operatorname{sgn}(\tau) \tau \zeta$
\cite{CG3}. However the determinant (\ref{det}) is well defined even
where $\zeta$ is not of maximal rank. Thus it is natural to consider
solutions $\zeta$ of equation (\ref{meteq}) satisfying the 
condition that $\det L(\zeta)$ is nowhere zero, i.e.  with $L(\zeta)$
of maximal rank, but with no {\em a priori} restriction on the rank of
$\zeta$. Note that this is a generic condition. Furthermore it is a
generalization of constant scalar curvature, but where $\zeta$ is
allowed to have a non-trivial degeneracy locus. With $\tau=\bdet(\ze)$, as above,
we obtain the following result.

\begin{theorem}\label{one} Let $(M,\bp)$ be an $n-$dimensional projective manifold equipped with a solution $\ze^{ab}$ of the metrizablility equation such that $\mathcal{R}(L(\zeta))=n+1$.  If $L(\ze)$ is definite then the degeneracy locus $\mathcal{D}(\ze)$ is empty and $(M, \bp,\zeta)$ is
  a Riemannian manifold with inverse metric $g^{-1}=\operatorname{sgn}(\tau)\tau
  \ze$. If $L(\ze)$ has signature $(p+1,q+1)$, with $p,q\geq 0$, then $\mathcal{D}(\ze)$ is either empty or it is a smoothly embedded separating hypersurface such that the following hold: 
  \\
  (i) $M$ is stratified by the strict signature of $\ze$ as a (density
  weighted) bilinear form on $T^*M$ with the partition of $M$ given by
$$
M=  \coprod\limits_{i \in \{+,0,-\}} M_i
$$
%%where $\tau >0$ on $M_+$  $\tau <0$ on $M_-$, and $\tau = 0$ on $M_0$. \\
 where $\zeta$ has signature $(p+1,q)$,
$(p,q+1)$,and $(p,q,1)$ on $M_{+}$, $M_{-}$, and $M_0$, respectively. \\
(ii) $M_0$ has a conformal structure of signature $(p,q)$. \\
(iii) On $M_\pm$, $\zeta$ induces a pseudo-Riemannian metric $g_{\pm}$, of the same signature as $\zeta$, with inverse $g^{-1}_{\pm} = \operatorname{sgn}(\tau) \tau \zeta|_{M_{\pm}}$. \\
(iv) If $M$ is closed, then the components $(M \backslash M_{\mp},\bp)$ are 
order 2 projective compactifications of $(M_{\pm}, g)$, with boundary $M_0$.\\
 \end{theorem}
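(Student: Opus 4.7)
The plan is to exploit the fact that $\zeta$ is realised as the restriction of the non-degenerate tractor form $L(\zeta)$ to a natural codimension-one subbundle. The cotractor bundle fits into
$$0 \to T^*M(1) \to \cT^* \to \cE(1) \to 0,$$
and, viewing $L(\zeta)\in\Gamma(S^2\cT)$ as a symmetric bilinear form on $\cT^*$, its restriction to $T^*M(1)\hookrightarrow\cT^*$ is precisely $\zeta$ as a density-weighted bilinear form on $T^*M$. When $L(\zeta)$ is definite this restriction is also definite, so $\zeta$ is nowhere degenerate, $\cD(\zeta)=\emptyset$, and the Riemannian conclusion reduces to Mikes--Sinjukov. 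In the indefinite case of signature $(p+1,q+1)$, pure linear algebra says that the restriction of a non-degenerate form from an $(n+1)$-dimensional space to a hyperplane has signature one of $(p+1,q)$, $(p,q+1)$, or $(p,q,1)$; defining $M_+$, $M_-$, $M_0$ fibrewise by these three cases gives the partition in (i), and (iii) then follows by applying the definite argument pointwise on each open stratum.

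The crux is showing $M_0$ is smoothly embedded. Set $\tau=\bdet(\zeta)$, so $M_0=\{\tau=0\}$. At $x\in M_0$ the kernel of $\zeta$ is spanned by some $\alpha\in T^*_xM$, and the cofactor of $\zeta$ at $x$ is proportional to $\alpha\otimes\alpha$. From the prolonged form of the metrizability equation, $\nabla_a\zeta^{bc}=\delta_a^{(b}\mu^{c)}$ (with $\mu$ the middle slot of $L(\zeta)$ in a chosen scale), and a standard cofactor identity gives
$$\nabla_a\tau\big|_x \;\propto\; (\alpha_c\mu^c)\,\alpha_a.$$
If $\alpha_c\mu^c$ vanished at $x$, then the cotractor with $T^*M(1)$-part $\alpha$ and $\cE(1)$-part zero would lie in the kernel of $L(\zeta)$, contradicting $\mathcal{R}(L(\zeta))=n+1$. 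Hence $d\tau$ is non-zero along $M_0$, so $M_0$ is a smoothly embedded hypersurface, with conormal at each point equal to the null direction of $\zeta$.

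Item (ii) is then immediate: since the null direction of $\zeta\big|_{M_0}$ coincides with the conormal of $M_0$, $\zeta$ descends to a non-degenerate density-weighted bilinear form on $T^*M_0$ of signature $(p,q)$, and the residual density freedom is exactly the data of a conformal cometric.

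Item (iv) is expected to be the main obstacle. The plan is to verify the definition of order-two projective compactification from \cite{CG1,CG2,CG3} directly, using that $\tau$ is a smooth boundary-defining quantity and that $g^{-1}_\pm=\operatorname{sgn}(\tau)\tau\,\zeta$ combines the simple zero of $\tau$ with the controlled degeneration of $\zeta$ along the conormal of $M_0$. Carrying this out will require working in a scale adapted to $\tau$ and comparing the Levi-Civita connection of $g_\pm$ with connections from $\bp$ near $M_0$; by contrast items (i)--(iii) reduce via the tractor perspective to essentially pointwise linear algebra together with the single key observation that the non-degeneracy of $L(\zeta)$ excludes the simultaneous vanishing of $\zeta\alpha$ and $\alpha_c\mu^c$.
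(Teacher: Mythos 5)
Your treatment of the hypersurface and stratification claims, i.e.\ parts (i)--(iii), is correct but follows a genuinely different route from the paper. The paper (Proposition \ref{4.3}, Lemmas \ref{4.4} and \ref{4.5}) works with the pointwise inverse $\Phi_{AB}$ of $H^{AB}=L(\ze)$: differentiating $\Phi_{AC}H^{CB}=\delta^B_A$ shows that, for a solution of the metrizability equation, $\nabla_i\tau=2\eta_i$ along $\mathcal{Z}(\tau)$, where $\tau=X^AX^B\Phi_{AB}$ and $\eta_a$ is the middle slot of $\Phi$; the identity $\bdet(\ze)=\thorn\det(H)\,\tau$ identifies $\mathcal{Z}(\tau)$ with $\cD(\ze)$, and simultaneous vanishing of $\tau$ and $\eta$ would make $\Phi$ degenerate. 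You instead differentiate $\bdet(\ze)$ directly: at a degenerate point $\operatorname{adj}(\ze)=c\,\alpha\otimes\alpha$ with $c\neq0$ and $\alpha$ spanning $\ker\ze$ (corank one because $\ze$ is the restriction of the nondegenerate $L(\ze)$ to the hyperplane subbundle $T^*M(1)\subset\cT^*$), and the prolonged equation $\nabla_a\ze^{bc}=-2\delta_a^{(b}\lambda^{c)}$ then gives $\nabla_a\tau\propto(\alpha_c\lambda^c)\,\alpha_a$; if $\alpha_c\lambda^c$ vanished, the cotractor $Z^b_B\alpha_b$ would be annihilated by $L(\ze)$, contradicting $\mathcal{R}(L(\ze))=n+1$. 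This is sound --- indeed it is essentially the same adjugate computation the paper uses in part (iii) of Theorem \ref{4.6} to identify the conormal of $M_0$ with the null direction of $\ze$ --- and it is arguably more elementary since it never forms $\Phi$; what it gives up is the extra generality of Lemma \ref{4.5}, which obtains the hypersurface conclusion without assuming $\ze$ solves the equation. Your pointwise signature bookkeeping and the induced conformal structure on $M_0$ (the radical of $\ze|_{M_0}$ being exactly the conormal line, so $\ze$ descends nondegenerately to $T^*M_0$) match the paper's statements.

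The genuine gap is part (iv): you only announce a plan (``verify the definition of order-2 projective compactification directly \dots comparing the Levi-Civita connection of $g_\pm$ with connections from $\bp$ near $M_0$'') and do not carry it out, so as written (iv) is unproved. Moreover, the proposed head-on comparison of connections is not needed and would amount to reproving Proposition 2.3 of \cite{CG1}. You have already established everything required for the short argument the paper uses in Theorem \ref{4.6}(ii): $\tau=\bdet(\ze)$ is a defining density of weight $2$ for $M_0$, and $\nabla^{g_\pm}$ preserves $\tau$ (for instance because $\bdet(g_\pm^{-1})=\pm\tau^{\,n+1}$ and the Levi-Civita connection preserves its own volume density and the canonical $\epsilon^2$), so $\nabla^{g_\pm}\in\bp$ is precisely the scale connection $\nabla^\tau$ determined by that defining density; Proposition \ref{2.2} then yields order-2 projective compactness of $(M_\pm,g_\pm)$ with boundary $M_0$ immediately. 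Supplying this observation (or, alternatively, a complete direct verification) is necessary to close the proof.
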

\noindent Theorem \ref{one} is a summary of the results obtained in Lemma
\ref{4.5} and Theorem \ref{4.6}. With reference to the first statement in the
Theorem, note that if $L(\ze)$ is negative definite then the
corresponding metric $g$ is also negative definite.  We denote the
signature of a real symmetric bilinear form by $(p,q,r)$, where $p,q$
and $r$ are the number, counting multiplicity, of positive, negative,
and zero eigenvalues, respectively, of any matrix representing the
form once a basis has been chosen. When $r=0$ we omit it. We define a
hypersurface to be a smoothly embedded submanifold of codimension 1.

Next to make contact with scalar-flat metrics we must consider
solutions $\zeta$ with $\det L(\zeta)=0 $. On the other hand $\rank
(\zeta)\leq \rank(L(\zeta))$ so the case of interest is
$\rank(L(\zeta))=n$. Note that this is a generic case among solutions
with $\det L(\zeta)=0 $. In this setting the geometries involved
differ to those above, and there can be a finer stratifcation:

\begin{theorem}\label{two}
 Let $(M,\bp)$ be a projective 
manifold equipped with a solution $\ze$ of the metrizablility 
equation such that $L(\ze)$ has signature $(p,q,1)$. If $\mathcal{D}(\ze)=\varnothing$, then $\zeta$ induces a scalar-flat pseudo-Riemannian metric of signature $(p,q)$ on $M$. Otherwise, if $\varnothing \subsetneq\mathcal{D}(\ze)\subsetneq M$, then the following hold:

\noindent $(i)$ $\mathcal{D}(\ze)$ is a smoothly embedded hypersurface. If $M$ and $\mathcal{D}(\ze)$ are orientable then $\mathcal{D}(\ze)$ is separating and $M$ is stratified according to the strict sign of a canonical projective density, $\si$, that is locally a square 
root of ${\normalfont \bdet}(\ze)$ or $-{\normalfont \bdet}(\ze)$. The partition of $M$ is given by  
$$
M=  \coprod\limits_{i \in \{+,0,-\}} M_i
$$ with $\si >0$ on $M_+$ and $\si <0$ on $M_-$, and $\si = 0$ on $M_0=\mathcal{D}(\ze)$. 

\noindent $(ii)$ $M_0$ is totally geodesic and inherits a projective structure $\hat{\bp}$. 

\noindent $(iii)$ On $M_\pm$, $\zeta$ induces a scalar-flat pseudo-Riemannian metric $g_{\pm}$, of the same signature as $\zeta$, with inverse $g^{-1}_{\pm} = \operatorname{sgn}(\tau) \tau \zeta|_{M_{\pm}}$, where $\tau\colonequals \text{\normalfont \bdet}(\ze^{ab})$. If $M$ is closed, then the components $(M \backslash M_{\mp},\bp)$ are order 1 projective compactifications of $(M_{\pm}, g)$, with boundary $M_0$.

\noindent $(iv)$ $(M_0,\hat{\bp})$ inherits a solution $\hat{\zeta}=\ze|_{\Sigma}$ of the metrizability equation and $\Sigma \colonequals M_0$ decomposes into
$$
\Sigma=  \coprod\limits_{i \in \{+,0,-\}} \Sigma_i
$$
according to the strict signature of $\hat{\zeta}$, where $\Sigma_+$ and $\Sigma_{-}$ are the components with $\hat{\ze}$ of signature $(p,q-1)$ and $(p-1,q)$, respectively.
Further $\Sigma_0$ inherits a conformal structure $(\Sigma_{0}, \bc)$ of signature $(p-1,q-1)$.\\
 \end{theorem}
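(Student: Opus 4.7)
The plan is to parallel the strategy of Theorem~\ref{one}, but to exploit the one-dimensional null direction of $L(\zeta)$ to extract a canonical projective density $\sigma$ that will play the dual role of tractor scale and defining function for the degeneracy locus. The non-degenerate case $\mathcal{D}(\zeta)=\varnothing$ is immediate: here $\zeta$ has full rank $n$, and the identification of $\det L(\zeta)$ with a nonzero multiple of the scalar curvature of $g=\operatorname{sgn}(\tau)\tau\zeta$ (recalled above following \cite{CG3}) turns $\det L(\zeta)\equiv 0$ into scalar flatness.

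For the remainder the pivotal construction is $\sigma$. Because $L(\zeta)\in S^2\mathcal{T}$ has pointwise signature $(p,q,1)$, its kernel is a line subbundle of $\mathcal{T}^*$. I would use the projective filtration of $\mathcal{T}$ and the BGG splitting formula recalled in Section~\ref{BGGsec} to show that this line is locally generated by a density section $\sigma$ of a specific weight, determined up to sign, and satisfying $\sigma^2=\pm\bdet(\zeta)$. The projective scale equation which $\sigma$ inherits from the BGG machinery (via its identification with the null eigenvector of $L(\zeta)$) should force $d\sigma\neq 0$ whenever $\sigma=0$, so that $\mathcal{D}(\zeta)=\{\sigma=0\}$ is a smoothly embedded hypersurface; orientability then globalises $\sigma$ with a well-defined sign, giving the stratification in (i).

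The geometries of the strata would then unfold as follows. On $M_\pm$ the condition $\sigma\neq 0$ forces $\bdet(\zeta)\neq 0$; $\zeta$ has full rank, so the inverse-metric formula for $g_\pm$ is the standard one, with scalar flatness again a consequence of $\det L(\zeta)\equiv 0$. Crucially, because $\sigma$ is a square root of $\pm\bdet(\zeta)$ rather than $\bdet(\zeta)$ itself, the boundary-defining density has half the projective weight used in Theorem~\ref{one}; this is the structural input that upgrades the compactification from order $2$ to order $1$ in the sense of \cite{CG1,CG2,CG3}. To obtain that $M_0$ is totally geodesic with an induced projective structure $\hat{\bp}$, I would show that the projectively invariant second fundamental form of $\{\sigma=0\}$ vanishes; the natural candidate computation proceeds by pairing the tractor equation satisfied by $\sigma$ with a vector tangent to $M_0$.

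For part (iv), tangentiality of $\hat\zeta\colonequals \zeta|_{M_0}$ should follow from the fact that the conormal of $M_0$ is carried by $d\sigma$, which lies in the null direction of $L(\zeta)$, so $\zeta$ pairs trivially with it; hence $\hat\zeta$ descends to a symmetric tensor on $M_0$. Projective invariance and compatibility between restriction and the induced projective connection would then yield that $\hat\zeta$ solves the metrizability equation on $(M_0,\hat{\bp})$, with $\hat L(\hat\zeta)$ inheriting signature $(p,q)$ (the null direction of $L(\zeta)$ being absorbed into the hypersurface normal). Theorem~\ref{one} applied to $(M_0,\hat{\bp},\hat\zeta)$ then produces the refined partition of $\Sigma$ and the conformal structure on $\Sigma_0$. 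The most delicate point, and the main obstacle to the argument, will be the compatibility of the BGG splitting operator with restriction to a totally geodesic hypersurface: verifying that $\hat L(\hat\zeta)$ is indeed the correct restriction of $L(\zeta)$, with the claimed signature, will require a careful local tractor calculation in a scale adapted to $\sigma$, and must be done without recourse to any normal-solution hypothesis on $\zeta$.
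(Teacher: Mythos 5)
Your overall strategy mirrors the paper's (extract a weight-one density $\si$ from the null direction of $L(\ze)$, use it as a defining density of half the weight of $\bdet(\ze)$ to get order-1 compactness, and restrict $\ze$ to the degeneracy hypersurface), but at the two genuinely hard points the argument is missing the idea that makes it work. The central issue is that $L(\ze)$ is \emph{not} parallel: the prolonged system only gives $\nabla_c H^{AB}=-\tfrac{2}{n}X^{(A}W\indices{_{cE}^{B)}_F}H^{EF}$ as in \eqref{EMconn}. First, your claim that the ``scale equation'' for $\si$ ``should force'' $d\si\neq 0$ on $\mathcal{Z}(\si)$ presupposes $I_A=D_A\si$ for the (nowhere-vanishing) null cotractor $I_A$; this is exactly what has to be proved, and the paper does it by differentiating the signed adjugate $\mathcal{H}_{AB}=I_AI_B$, showing $X^AX^B\nabla_c\mathcal{H}_{AB}=2\si X^B\nabla_cI_B$ vanishes by the adjugate formula together with \eqref{EMconn}, and then a closure/continuity argument from the set where $\si\neq0$. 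Second, and more seriously, for (ii) your proposed computation --- pairing ``the tractor equation satisfied by $\si$'' with a tangent vector --- only yields $H^{AB}\xi^c\nabla^{\mathcal{T}}_cI_B=-(\xi^c\nabla^{\mathcal{T}}_cH^{AB})I_B$, and the right-hand side is a curvature contraction that is not obviously zero for non-normal solutions. Total geodesy of $M_0$ requires precisely the tangentiality statement $I_BZ^e_E\Omega\indices{_{ce}^{B}_{F}}H^{EF}=0$ along $\mathcal{Z}(\si)$, which is the paper's Lemma \ref{4.13}, proved via the rank-one factorization of $\operatorname{adj}(\ze)$ on $\Sigma\setminus\Sigma_0$, its proportionality with $\nabla\si$, and a density argument. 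Without this your proof of (ii) does not go through, and since your route to (iv) uses total geodesy to restrict the equation, the gap propagates.

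For (iv) you also take a route the paper deliberately avoids: applying Theorem \ref{one} intrinsically to $(M_0,\hat{\bp},\hat\ze)$ requires knowing that the \emph{intrinsic} splitting operator $L^{\Sigma}(\hat\ze)$ is everywhere nondegenerate of signature $(p,q)$, i.e.\ the compatibility of the BGG splitting with restriction to the (totally geodesic) hypersurface. You correctly flag this as the main obstacle but leave it unresolved, so as written it is a missing step, not a proof. The paper circumvents it: it never identifies $L^{\Sigma}(\hat\ze)$ with restricted tractor data, but instead works with $\hat{H}_{AB}$, the inverse of $H^{AB}$ on $I^{\perp}$, proves directly (Proposition \ref{4.12}, again using \eqref{EMconn}) that $\Sigma_0=\mathcal{Z}(\hat\tau)$ is a smooth hypersurface of $\Sigma$, checks by hand that $\hat\ze$ solves the metrizability equation for $\hat{\bp}$ (using total geodesy and tangentiality of $\lambda^a$), and obtains the conformal structure on $\Sigma_0$ by the kernel argument of Theorem \ref{4.6}(iii). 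If you want to keep your route, you must either prove the splitting-operator compatibility or replace that appeal by direct arguments of this kind.
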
 
\noindent The results in Theorem \ref{two} hold locally regardless of
orientability of $M$ and $\mathcal{D}(\ze)$.  The components $M_+$,
$M_0$, and $ M_{-}$ in the above theorems are not necessarily each
connected. Theorem 1.2 is a summary of the results obtained from
Theorem \ref{4.11}, Proposition \ref{4.12}, and Theorem \ref{4.14}. An
interesting feature of the development of these results is that it
involves a detailed treatment of highly non-linear relationships
between different linear geometric partial differential equations.

In \cite{CG3} it is shown that if the interior of a manifold with
boundary is equipped with a pseudo-Riemannian metric satisfying a
non-vanishing scalar curvature condition and whose Levi-Civita
connection does not extend to the boundary, while its projective
structure does, then the metric is projectively compact of order
2. From Theorem \ref{two} follows an analogue of that result for
metrics of zero scalar curvature.

\begin{corollary}\label{mcor}
Let $\overline{M}$ be an orientable, connected manifold with boundary $\partial M$ and interior $M$, equipped with a scalar-flat pseudo-Riemannian metric $g$ on $M$, such that its Levi-Civita connection $\nabla^{g}$ does not extend to any neighborhood of a boundary point, but the projective structure $\bp \colonequals [\nabla^g]$ does extend to the boundary. Let $\tau \colonequals \operatorname{vol}(g)^{-\frac{2}{n+2}}$. Then $\ze^{ab}\colonequals \tau^{-1}g^{ab}$ extends to the boundary. If $L(\ze^{ab})$ has rank $n$ on $\overline{M}$, then $(M,g)$ is projectively compact of order $1$.
\end{corollary}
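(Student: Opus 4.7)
The plan is to apply Theorem~\ref{two} to $\overline{M}$ equipped with the extension of $\zeta^{ab}$; the main work is to verify that the hypotheses of that theorem are met.

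First I would check that $\zeta^{ab}=\tau^{-1}g^{ab}$ extends smoothly across $\partial M$. The weight on $\tau$ is chosen precisely so that $\zeta$ becomes a section of a weighted contravariant $2$-tensor bundle whose transition functions depend only on the projective structure $\bp$ and the canonical projective density bundles; since $\bp$ extends to $\partial M$ by hypothesis, so does $\zeta$. (This is the same mechanism that underlies the projective compactification results of \cite{CG3}.) Then, on the interior, the Mikes--Sinjukov theorem applied to $g\in\bp$ tells us that $\zeta$ satisfies the metrizability equation (\ref{meteq}); both sides of that equation are continuous on $\overline{M}$, so the equation persists to the boundary.

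Next I would determine the signature of $L(\zeta)$. By the identification recalled in the excerpt, on the open set where $\zeta$ has full rank, $\det L(\zeta)$ is a non-zero constant multiple of the scalar curvature of $g$. Since $g$ is scalar-flat on $M$, $\det L(\zeta)\equiv 0$ on $M$, and hence on all of $\overline{M}$ by continuity. Combined with the hypothesis $\rank L(\zeta)=n$, this forces the signature of $L(\zeta)$ to be $(p,q,1)$ with $p+q=n$. Moreover, the degeneracy locus of $\zeta$ is precisely $\partial M$: on $M$, $\zeta$ has full rank $n$ because it comes from a non-degenerate metric; on $\partial M$ it must degenerate, since otherwise $\bdet(\zeta)$ would be nonzero at some boundary point and $g=\operatorname{sgn}(\tau)\tau\zeta$ would extend across it together with its Levi-Civita connection, contradicting the hypothesis on $\nabla^g$.

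All the assumptions of Theorem~\ref{two} are now in place, with $M_0=\partial M$, $M_+=M$ (after a choice of sign), and $M_-=\varnothing$. Part (iii) of that theorem then identifies $(\overline{M},\bp)$ as an order-$1$ projective compactification of $(M,g)$, which is the desired conclusion. The subtlety I expect to require the most care is that Theorem~\ref{two}(iii) is phrased for closed ambient manifolds, whereas $\overline{M}$ is a manifold with boundary here; the resolution is that the order-$1$ projective compactification claim is a local condition along the separating hypersurface, so once $\cD(\zeta)$ has been identified with $\partial M$ the argument transfers verbatim.
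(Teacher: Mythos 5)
There is a genuine gap at the very first step, which is in fact part of the statement to be proved: that $\ze^{ab}=\tau^{-1}g^{ab}$ extends smoothly to $\partial M$. Your justification---that the relevant weighted bundle $\mathcal{E}^{(ab)}(-2)$ has transition functions depending only on $\bp$ and the projective density bundles, and $\bp$ extends, hence $\ze$ extends---is a non sequitur: the bundle being defined over all of $\overline{M}$ says nothing about whether a particular section given only over the interior $M$ admits a smooth (or even continuous) extension to the boundary. Any smooth function on $M$ lives in a bundle that trivially extends to $\overline{M}$, yet need not extend itself. The paper closes exactly this gap with a real argument: since $\ze$ solves the metrizability equation on $M$, the tractor $H^{AB}=L(\ze)$ is parallel on $M$ for the prolongation (``Eastwood--Matveev'') connection given by the left-hand side of \eqref{EMconn}; that connection is determined by $\bp$ alone and so is defined on all of $\overline{M}$, a parallel section over the interior therefore extends smoothly (by parallel transport) across $\partial M$, and $\ze$ is recovered as the projecting slot of the extended parallel tractor. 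Without this (or an equivalent asymptotic argument in the spirit of \cite{CG2,CG3}), your subsequent steps---continuity of $\det L(\zeta)$ up to the boundary, persistence of the metrizability equation, and the identification $\mathcal{D}(\ze)=\partial M$---have nothing to act on.

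The remainder of your outline does match the paper's route: once $\ze$ is known to extend, the signature of $L(\ze)$ is $(p,q,1)$ by constancy of rank on the connected $\overline{M}$, the degeneracy locus is all of $\partial M$ because nondegeneracy of $\ze$ at a boundary point would let $g$ and $\nabla^g$ extend there (the paper's argument verbatim), and the conclusion follows from Theorem \ref{4.14} (the body of Theorem \ref{two}). Your remark about the closedness hypothesis is handled correctly in spirit: order-$1$ projective compactness is a local statement near $\partial M$ obtained from Proposition \ref{2.2}, since $\si$ (locally a square root of $\pm\bdet(\ze)$) is a weight-$1$ defining density for $\mathcal{D}(\ze)$ that is a scale for $g$ on the interior. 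So the proposal is salvageable, but as written the extension step---the one point where the hypothesis on $\nabla^g$ versus $\bp$ genuinely enters analytically---is unproved.
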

\noindent The condition that $L(\ze^{ab})$ have rank $n$ on $\overline{M}$
implies that the scalar curvature is identically zero on $M$.

Insight
and further motivation for the work here is provided by the models for
these structures. Just as the usual round sphere is a compact
homogeneous model for Riemannian geometry there are corresponding
compact models for the structures captured in Theorems \ref{one} and
\ref{two}, as follows.

The standard homogeneous model for projective geometry is the
$n$-sphere arising as the ray projectivization
$S^{n}=\mathbb{P}_+(\mathbb{R}^{n+1})$ of $\mathbb{R}^{n+1}$
(i.e.\ the double cover of $\mathbb{RP}_{n+1}$). The unparametrized
geodesics are the embedded great circles. On this the group
$G=SL(\mathbb{R}^{n+1})$ acts transitively. Now suppose we fix on
$\mathbb{R}^{n+1}$ a non-degenerate symmetric bilinear form $h$ of
signature $(p+1,q+1)$. In $G$ consider the subgroup $H:=SO(h)\cong
SO(p+1,q+1)$ fixing $h$ (so $p+q=n-1$). This acts on the projective
sphere $S^n$ but now with orbits parametrized by the strict sign of
$h(X,X)$ where $X$ denotes the homogeneous coordinates of a given
point on $S^n$. The projective sphere $S^n$ equipped with this action
of $H$ and accompanying orbit decomposition is the model for the
structure discussed in Theorem \ref{one}. This follows easily from the
tractor approach that we use with the interpretation of the tractor
bundles over the homogeneous space $G/P$.  (So the Theorem also
reveals, for this model, the general features of the orbits and the
geometries thereon.) In fact, $h^{-1}=L(\zeta)$ where $\zeta$ is the
corresponding solution of (\ref{meteq}) and, in the language of
\cite{CSlov}, this is a holonomy reduction of a flat Cartan geometry
(namely $G\rightarrow S^n$). Turning this around we see that the
Theorem \ref{one} shows that solutions $\zeta$ of equation
(\ref{meteq}), satisfying that $\det (L(\zeta)) $ is nowhere zero,
provide well behaved curved generalizations of this model even though
$\zeta$ is not required to be normal (i.e. $L(\zeta)$ is not required
to be parallel).

%RS: Keegan. It might be better if we say very briefly why these model claims are clear from our work below\\ RS: Also add other references -- where else did we treat the metrizability equation? Was in the art with Andi and Heather?}

Next consider again $S^{n}=\mathbb{P}_+(\mathbb{R}^{n+1})$ and acting
on this the group $G$ as above. Consider now a rank $n$ symmetric
bilinear form $k$ on $(\mathbb{R}^{n+1})^*$, of signature $(p,q,1)$,
and a covector $0\neq u\in(\mathbb{R}^{n+1})^* $ satisfying
$k(u,\cdot)=0$. The subgroup $H<G$ simultaneously fixing $k$ and $u$
is a copy of the pseudo-Euclidean group $SO(p,q)\rtimes \mathbb{R}^n$,
and $S^n$ with this action is the model for the structure treated in
Theorem \ref{two}. In this case $L(\zeta)=k$ where $\zeta$ is a
corresponding solution of (\ref{meteq}), again these claims follow
easily from the general theory in \cite{CGH2}, namely that each
component of the manifold decomposition corresponds to an orbit on the
model, together with our results in Section \ref{dsec}. Thus Theorem
\ref{two} shows that solutions to (\ref{meteq}) with $\rank
(L(\zeta))=n$, at all points, are curved generalizations of this
model. These are well behaved in the spirit of the results in
\cite{CGH1,CGH2} but without the assumption of solution
normality. Furthermore, the corollary shows that we obtain a
projective compactification that generalizes the model case. In the
model case we identify both the lower and upper hemispheres of
$\mathbb{S}^n$, via central projection, with indefinite
pseudo-Euclidean $n$-space, $\mathbb{E}^{(p,q)}$. Then, via this
construction, the boundary of projectively compact pseudo-Euclidean
space is identified with the closed equatorial $H$ orbit,
$\mathbb{S}^{n-1}$, which is itself a lower dimensional copy of the
model discussed previously, and hence decomposes into $SO(p,q)$
orbits.

This result, that the Theorems show the structures we consider
generalize in a very precise way these orbit decompositions,
demonstrates that the structures we consider (i.e. projective
manifolds equipped with solutions of \eqref{meteq} satisfying the
given constant rank conditions on their prolonged systems $L(\zeta)$)
are sound and interesting. The existence of curved examples follows at
once from the examples of projectively compactified metrics discussed in \cite{CG1,CG2}. The
assumption in Theorems \ref{one} and \ref{two} that $L(\zeta)$ has
constant rank ($n+1$ and $n$ respectively) is, in the language of
\cite{CGH2,CGH1} a {\em constant $G$-type} assumption. (On connected
manifolds this is clearly automatic for normal solutions.)  On a
Riemannian manifold the scalar curvature can be locally almost any
function, as is clear from the results Kazdan and Warner on prescribed
scalar curvature \cite{KW1,KW2}. This shows that there are solutions
of equation (\ref{meteq}) where the rank of $L(\zeta)$ moves between
$(n+1)$ and $n$ in a very complicated manner. So it would seem that
the fixed-rank $G$-type assumptions are necessary to get a reasonable
theory.

%% %But since this corresponds to a change in the strict sign of the
%% %generalized scalar curvature the results of show that the zero locus
%% %of the generalized scalar curvature and hence the corresponding
%% %closed G-orbit need not be a smooth submanifold in general. So our
%% %fixed G-type assumption is indeed a natural restriction to impose.

The structure of the article is as follows. In the Section \ref{background} we
briefly review projective tractor calculus and projective
compactification. These provide the framework and computational tools
we  utilize. In Section \ref{BGGsec} we describe
BGG machinery and develop two examples that are relevant to our later
results. Finally, in Section \ref{results}, we state and prove the main results.

\section{Projective Tractor Calculus and Projective Compactification}\label{background}
\setcounter{section}{2}

Let $M$ be a manifold of dimension $n \geq 2$, equipped with $\bp$, a
projective class of torsion free affine connections. Then the pair
$(M, \bp)$ is called a {\em projective manifold}. Connections in the
projective class $\bp$ have the same geodesics up to reparametrization
(i.e. as unparametrized curves).  Two such connections $\nabla,
\overline{\nabla} \in \bp$ are explicitly related by the formula
\begin{align*}
\overline{\nabla}_{a} Y ^{b} = \nabla _{a} Y ^{b} + \Upsilon _{a} Y ^{b} +  \Upsilon _{c} Y ^{c} \delta ^{b}_{a}, 
\end{align*}
and its dual
\begin{align*}
\overline{\nabla}_{a} u _{b} = \nabla _{a} u _{b} - \Upsilon _{a} u
_{b} - \Upsilon _{b} u _{a}\end{align*} for $Y \in \Gamma (TM)$, $u
\in \Gamma (T^{*} M)$, and for some one-form $\Upsilon \in \Gamma
(T^{*} M)$. The indices in the above formulae are abstract indices.

We will use Penrose abstract index notation when convenient. So for
example $\mathcal{E}^a$ and $\mathcal{E}_a$ are alternative notations
for $TM$ and $T^*M$ respectively. Contraction is indicated by repeated
indices in the usual way. We symmetrize over abstract indices
contained in parentheses and skew over indices contained in square brackets,
e.g. $T_{(ab)} = \frac{1}{2}(T_{ab}+T_{ba})$ and $S_{[ab]} =
\frac{1}{2}(S_{ab}-S_{ba})$.

In our treatment an important role is
played by the links between metrics and projective structure.  For
details on metrics and Einstein metrics in projective geometry see
e.g. \cite{CGM,EM,GLW2,GH,GM}.

\subsection{Tractor bundles and tractor connections}\label{tbundles} 
The basic invariant calculus on projective manifolds is the so-called projective tractor calculus \cite{BEG,CGM} and we briefly  recall this here. 
Let $\mathcal{E}(1)$ be the $(2n + 2)^{th}$ root of the naturally
oriented lined bundle $(\Lambda ^{n} TM)^{2}$, and by $\mathcal{E} (w)
\coloneqq (\mathcal{E} (1))^{w}$ we will denote the $w^{th}$ power of
$\mathcal{E} (1)$, for $w \in \mathbb{R} $.  Then we write
$\mathcal{B} (w) \coloneqq \mathcal{B} \otimes \mathcal{E} (w)$ for
any bundle $\mathcal{B}$.
Note that any affine connection acts on $(\Lambda ^{n} TM)^{2}$ and hence on its roots $\mathcal{E} (w)$.

The {\em cotractor bundle}
${\mathcal{E}}_{A}$ is defined by
\begin{align*}
{\mathcal{E}}_{A} \colonequals J^{1}( \mathcal{E}  (1)),
\end{align*}
where $J^1( \mathcal{E} (1))$ denotes the bundle of $1$-jets of
sections of $ \mathcal{E} (1)$. The short exact sequence, usually called the jet exact sequence at 1-jets, 
\begin{align}\label{ceuler}
0 \rightarrow \mathcal{E}_b(1) \rightarrow J^1(\mathcal{E}(1)) \rightarrow \mathcal{E}(1) \rightarrow 0
\end{align}
describes the filtration structure on the cotractor bundle, in the
sense that there is a subbundle $\mathcal{B} \subseteq \mathcal{E}_A$,
such that $\mathcal{B} \cong \mathcal{E}_a(1)$ and $\mathcal{E}_A /
\mathcal{B} \cong \mathcal{E}(1)$. A connection on $\mathcal{E}(1)$ is
the same as a splitting of the sequence \eqref{ceuler}, so we will
sometimes refer to a choice of connection in the projective class as a
{\em splitting}. So given a choice of $\nabla \in \bp$, $\mathcal{E}_A$ decomposes as the direct sum
\[
{\mathcal{E}}_{A} \overset{\nabla}{\cong}  \mathcal{E}  (1) \oplus {\mathcal{E}} _{b} (1).
\]

The {\em standard tractor bundle}, $\mathcal{E}_{A}$, is the dual bundle to the standard cotractor bundle and so has the composition series
\[
0 \rightarrow \mathcal{E}(-1) \xrightarrow{X^A} \mathcal{E}^A \xrightarrow{Z^b_A} \mathcal{E}^b(-1) \rightarrow 0
\] 
where $X^A$ and $Z^b_A$ are projectively invariant. Given a choice of splitting, we denote the lifting map from the weighted tangent bundle to tractor bundle by $W_a^A:\mathcal{E}^a(-1)\rightarrow \mathcal{E}^A$ and the projection by $Y_A:\mathcal{E}^A\rightarrow \mathcal{E}(-1)$. By definition these satisfy the following relations: 

$\ $

\begin{center}
\begin{tabular}{ c| c  c }
 & $X^A$ & $W^A_a$ \\
\hline
$Y_A$ & 1 & 0 \\ 

$Z_A^b$ & 0 & $\delta^b_a$ \\ 
\end{tabular}
\end{center}

$\ $

We denote sections $V^A\in \Gamma(\mathcal{E}^A)$ and $U_A\in \Gamma(\mathcal{E}_A)$, respectively, by $V^A=W^A_a\nu^a + X^A\rho$ and $U_A = Y_A\xi + Z_A^a\mu_a$. In the presence of a splitting we will often abuse notation and denote these sections as follows,
\begin{align*}
V^A = W^A_a\nu^a + X^A\rho \overset{\nabla}{=} \left(
\begin{array}{c}
\nu ^{a}  \\
\rho  \\
\end{array} 
\right)
\ \ \ \ \ \operatorname{and} \ \ \ \ \ 
U_A = Y_A\xi + Z_A^a\mu_a \overset{\nabla}{=} \left(
\begin{array}{c}
\xi   \\
\mu_{a}  \\
\end{array} 
\right).
\end{align*}
When we wish to suppress the abstract indices we will denote the tractor and cotractor bundles by $\mathcal{T}$ and $\mathcal{T}^*$. respectively.

Associated with a projective structure on an $n$-dimensional manifold
$M$ is a canonically determined linear connection
$\nabla^{\mathcal{T}}$, on the bundle $\mathcal{T}$, known as the {\em
  normal tractor connection}.  In terms of a splitting the tractor and
cotractor connection this is given explicitly by
\begin{align}
\nabla ^{\mathcal{T}}_{a }
\left(
\begin{array}{c}
\nu ^{b}  \\
\rho  \\
\end{array} 
\right)
= 
\left(
\begin{array}{c}
\nabla_{a} \nu ^{b} + \rho \delta ^{b}_{a}  \\
\nabla _{a} \rho - P_{ab} \nu ^{b}  \\
\end{array} 
\right)
\ \ \ \ \ \text{\normalfont and} \ \ \ \ \ 
\nabla ^{\mathcal{T}_{*}}_{a} 
\left(
\begin{array}{c}
\xi  \\
\mu _{b}  \\
\end{array} 
\right)
 = 
 \left(
\begin{array}{c}
\nabla _{a} \xi - \mu _{a}  \\
\nabla _{a} \mu _{b} + P_{ab} \xi  \\
\end{array} 
\right),
\end{align} 

\noindent where $P_{ab}$ denotes the projective Schouten tensor as
defined in \cite{BEG} and \cite{CSlov}. We shall be mainly interested
in affine connections $\nabla$ that are {\em special}, meaning that
$\nabla$ preserves a volume density. Then, with the curvature
$R_{ab}{}^c{}_d$ of $\nabla $ on $TM$ given by
$(\nabla_a\nabla_b-\nabla_a\nabla_b)v^c=R_{ab}{}^c{}_dv^d$, we have
$(n-1)P_{bd}=\operatorname{Ric}_{bd}$, where $\operatorname{Ric}$ is the
Ricci tensor $R_{ab}{}^a{}_d$.

\begin{remark} \label{2.1}
The normal tractor connection is equivalent to the normal Cartan
connection if we view our projective manifold as a Cartan geometry
$(\mathcal{G},\omega)$ of type $(G ,P)$ where $G:=SL(\mathbb{R}^{n+1})$ and $P$ is
the parabolic subgroup stabilizing a fixed ray in
$\mathbb{R}^{n+1}$. Then for any $G$ representation $V$ we say that
$\mathcal{V} \colonequals (\mathcal{G}\times _P G) \times _G V =
\mathcal{G} \times_P V$ is a tractor bundle \cite{CGH2}. The Cartan
connection $\omega$ extends to a G-principal connection on
$\mathcal{G}\times _P G$, which in turn, induces a linear connection
on $\mathcal{V}$ called the tractor connection. In this language the
standard tractor bundle corresponds to the standard representation of
$G\cong SL(n+1)$, i.e. $\mathcal{T} = \mathcal{G} \times_P \mathbb{R}^{n+1}$.
\end{remark}

We will also be using the projectively invariant Thomas $D-$operator
$D_A:\mathcal{E}^{\bullet}(\omega)\rightarrow
\mathcal{E}_A^{\bullet}(w-1)$, as in \cite{BEG}.  Here $\mathcal{E}^{\bullet}$ denotes
any tractor bundle, and in a splitting $D_A$ is defined by

\begin{align*}
D_A U^{\bullet} \colonequals \left(
\begin{array}{cc}
\omega U^{\bullet} \\
\nabla_a U^{\bullet}   \\
\end{array} 
\right)
 =Y_A \omega U^{\bullet} + Z^a_A \nabla_a U^{\bullet},
\end{align*}
where $\nabla$ denotes the connection that couples the affine
connection of the spliting with the tractor connection. We are giving
the operator both in terms of the matrix presentation and tractor
injectors. Note that in particular this acts on projective densities:
$D_A:\mathcal{E}(\omega)\rightarrow\mathcal{E}_A(\omega-1)$, again
given explicitly by $\sigma \mapsto Y_A\sigma + Z^a_A\nabla_a\sigma$
where now $\nabla $ is simply the affine connection associated with
the given splitting.

In a splitting, sections $H^{AB} \in \Gamma ({\mathcal{E}^{(AB)}})$ and $H_{AB} \in \Gamma ({\mathcal{E}_{(AB)}})$ can be expressed as follows,

\begin{align}
{H^{AB}} = 
\left(
\begin{array}{cc}
\zeta^{ab} \\
\lambda^{a}  \\
\tau  \\
\end{array} 
\right) \colonequals W^A_cW^B_d\zeta^{cd} + 2 X^{(B}W^{A)}_c\lambda^c +X^AX^B\tau
\ \ \ \ \ \text{\normalfont and} \ \ \ \ \ 
{H_{AB}} = 
\left(
\begin{array}{cc}
\tau  \\
\lambda _{a}\\
\zeta_{ab} \\
\end{array} 
\right).
\end{align}
These could also be given by square symmetric matrices, but we use the above ``column'' form for ease of readability.
For later reference we note that the tractor curvature of $H^{AB}\in \Gamma(\mathcal{E}^{(AB)})$ is given by
\begin{align}
\Omega\indices{_{ab}^{C}_{E}}H^{DE} = (\nabla_a\nabla_b - \nabla_b\nabla_a)
\left(
\begin{array}{c}
\ze ^{cd}  \\
\lambda^c  \\
\tau \\
\end{array} 
\right)
=
\left(
\begin{array}{c}
W\indices{_{ab}^c_e}\ze^{de} + W\indices{_{ab}^d_e}\ze^{ce}  \\
W\indices{_{ab}^c_d}\lambda^d - Y_{abd}\ze^{cd}  \\
-2Y_{abc}\lambda^c \\
\end{array} 
\right),
\end{align}
where $W\indices{_{ab}^c_d}$, the {projective Weyl tensor} is totally trace-free and $Y_{abc} \colonequals \nabla_aP_{bc}-\nabla_b P_{ac}$ is the {\em projective Cotton tensor}.

Let  $\epsilon^2_{a_1 \cdots a_nb_1 \cdots b_n}$ denote the canonical section of $\mathcal{E}_{[a_1 \cdots a_n][b_1\cdots b_n]}(2n+2)$ which gives the identifying bundle map $\mathcal{E}^{[a_1 \cdots a_n][b_1 \cdots b_n]} \rightarrow \mathcal{E}(2n+2)$. This allows us to define the determinant of weighted contravariant 2-tensors as follows 
\begin{align}
\bdet : \mathcal{E}^{ab}(m) & \rightarrow \mathcal{E}(nm + 2n +2) \\
\nonumber \sigma^{ab} & \mapsto \epsilon^2_{a_1 \cdots a_nb_1 \cdots b_n}\sigma^{a_1b_1}\cdots \sigma^{a_nb_n}.
\end{align}
Next the projectively invariant parallel tractor
$$
\epsilon^2_{AB\cdots
  CDE\cdots E} \colonequals \epsilon^2_{b \cdots ce \cdots
  f}Y_{[A}Z^b_B \cdots Z^c_{C]}Y_{[D}Z^e_E \cdots Z^f_{F]},$$ which is
the square of the {\em tractor volume form} in the orientable case,
allows us to take determinants of contravariant 2-tractors,
\begin{align*}
\det : \mathcal{E}^{AB} & \rightarrow \mathbb{R} \\
H^{AB} & \mapsto \epsilon^2_{A_0 \cdots A_nB_0 \cdots B_n}H^{A_0B_0}\cdots H^{A_nB_n}.
\end{align*}

\subsection{Projective compactification}\label{proj-c-sec}

Projective compactification is a notion of compactification for affine
connections that is connected to projective differential geometry. It
was introduced in \cite{CG1} following the observation of special
cases in \cite{CGH1,CGH2,FG}.  For pseudo-Riemannian metrics this is
defined in the first instance via the Levi-Civita connection. First we give some background.

Let $M$ be a manifold and $\Sigma$ a smoothly embedded submanifold of
codimension 1 which we will call a {\em hypersurface}. A {\em local
  defining function} for a hypersurface $\Sigma$ is a smooth function
$r:U \rightarrow \mathbb{R}_{\geq 0}$, defined on an open  subset $U$ of $M$,
satisfying $\mathcal{Z}(r) = \Sigma \cap U$ and $\mathcal{Z}(dr) \cap
\Sigma = \varnothing$ on $\Sigma \cap U$, where $\mathcal{Z}(-)$
denotes the zero locus. Then, extending this concept, a {\em defining
  density of weight w} is a local section $\sigma$ of $\mathcal{E}(w)$
such that $\sigma = r\hat{\sigma}$, where $r$ is a defining function
for $\Sigma$ and $\hat{\sigma}$ is a section of $\mathcal{E}(w)$ that
is nonvanishing on $U$. Phrased differently, $\sigma\in \Gamma
(\mathcal{E}(w))$ is a defining density of weight $w$ if it satisfies
$\mathcal{Z}(\sigma) = \Sigma\cap U$ and $\mathcal{Z}(\nabla \sigma)
\cap \Sigma = \varnothing$, for some, equivalently any, connection
$\nabla$ on $\mathcal{E}(w)$. If $\si$ with these properties is defined globally then
$M_0 \colonequals
\mathcal{Z}(\sigma)$ is a separating hypersurface in that it partitions
$M$ into the disjoint union
\begin{equation}\label{strat}
M= M_{-} \cup M_{0} \cup M_{+}
\end{equation}
of open components $M_{-} \colonequals \{x \in M : \sigma <0\}$ and $M_{+} \colonequals \{x \in M : \sigma >0\}$, and closed component $M_{0}$. 
The components $M_{-}$, $M_{0}$, and $ M_{+}$ are not
necessarily connected. Note $M \backslash M_{\pm}$ is a manifold with boundary $M_0$. 

On a manifold $\overline{M}$, with boundary $\partial M$ and interior $M$, a connection $\nabla$ on $TM$ is said to be {\em projectively compact of order} $\alpha \in \mathbb{R}_{+}$ if for any point $x\in \partial M$ there is a local defining function $r:U \rightarrow \mathbb{R}_{\geq 0}$ defined on an open subset $U\subseteq \overline{M}$ containing $x$ such that the projectively related connection 
\begin{align*}
\hat{\nabla}_{\mu}\xi = \nabla_{\mu}\xi +\frac{dr(\mu)}{\alpha r}\xi + \frac{dr(\xi)}{\alpha r}\mu,
\end{align*}
defined on $U\cap M$, is smooth up to the boundary for all vector fields $\mu$ and $\xi$ that are smooth up to the boundary.

Recal that the bundles $\cE(w)$ are oriented. For any $w\neq 0$, it is
well known, and easily verified, that any nowhere-vanishing section
$\sigma\in\cE(w)$ determines a connection $\nabla$ in $\bp$
characterised by $\nabla_a\sigma=0$. For $0\neq w \in\mathbb{R}$ we call
a nowhere vanishing section of $\cE(w)$ as well as its corresponding
connection $\nabla^\si$ in $\bp$ a \emph{scale}. Note $\nabla\in\bp$ is a scale
if and only if it is special in the sense that it preserves a volume density.  

We will use often a characterization of projective compactness from part (ii) of Proposition 2.3 in \cite{CG1}: 
\begin{proposition}\label{2.2} 
Let $\overline{M}$ be a smooth $n$--dimensional manifold with boundary
$\partial M$ and with interior $M$. Let $\alpha\in \mathbb{R}_+$. 
 Suppose that $\overline{M}$ is endowed with a projective structure, and
  that $\si\in\Ga(\Cal E(\al))$ is a defining density for $\partial
  M$. Then one can view $\si$ as a scale for the restriction of the
  projective structure to $M$ and the affine connection $\nabla^\si$ on
  $M$ determined by this scale is projectively compact of order $\al$.
\end{proposition}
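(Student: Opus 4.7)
The plan is to exhibit, near each boundary point, an explicit projectively related connection that is smooth up to $\partial M$, and to read off its projective one-form from the defining density itself. The upshot will be that the one-form has the precise singular form $-dr/(\alpha r)$ demanded by the definition of projective compactness of order $\alpha$.

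First, I would unpack the definition of a defining density of weight $\alpha$. Fix $x\in\partial M$, shrink $U$ if necessary, and write $\sigma=r\hat\sigma$ on $U$ where $r$ is a local defining function for $\partial M$ and $\hat\sigma\in\Ga(\Cal E(\al)|_U)$ is nowhere vanishing and smooth up to the boundary. Since the projective structure is smooth up to $\partial M$ and $\hat\sigma$ is a nonvanishing density of nonzero weight up to the boundary, the scale $\hat\sigma$ determines a connection $\nabla^{\hat\si}\in\bp$, characterised by $\nabla^{\hat\si}_a\hat\si=0$, that is smooth up to $\partial M$ on $U$. By contrast $\nabla^\si$, characterised by $\nabla^\si_a\si=0$, is a priori defined only on $U\cap M$. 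This $\nabla^{\hat\si}$ will be the candidate for the connection $\hat\nabla$ appearing in the definition of projective compactness.

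Next I would compute the projective one-form $\Upsilon$ relating the two scales. Write $\nabla^\si=\nabla^{\hat\si}+\Upsilon$ in the sense of the transformation rule given at the start of this section. For a density $\tau\in\Ga(\Cal E(w))$ this rule induces
\[
\nabla^\si_a\tau=\nabla^{\hat\si}_a\tau+w\,\Upsilon_a\tau,
\]
as follows at once from the vector/covector formulas applied to the volume density in $\Cal E(n+1)\cong\Lambda^n TM$. Applying this with $\tau=\si$ of weight $\alpha$, using $\nabla^\si_a\si=0$ and $\nabla^{\hat\si}_a\si=\nabla^{\hat\si}_a(r\hat\si)=(dr)_a\,\hat\si$, yields
\[
0=(dr)_a\hat\si+\al\,\Upsilon_a\,r\hat\si,\qquad\text{hence}\qquad \Upsilon_a=-\frac{(dr)_a}{\al r}\quad\text{on }U\cap M.
\]

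Finally I would substitute this $\Upsilon$ into the projective transformation formula for vector fields and invert to express $\nabla^{\hat\si}$ in terms of $\nabla^\si$. A direct reading gives, for all vector fields $\mu,\xi$ on $U\cap M$,
\[
\nabla^{\hat\si}_\mu\xi=\nabla^\si_\mu\xi+\frac{dr(\mu)}{\al r}\xi+\frac{dr(\xi)}{\al r}\mu,
\]
which is exactly the defining identity for $\nabla^\si$ to be projectively compact of order $\al$, with $\hat\nabla\colonequals\nabla^{\hat\si}$ being smooth up to the boundary by construction. Since this works at every $x\in\partial M$, the proof is complete.

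The content is essentially unavoidable once $\hat\nabla$ is chosen correctly; the only place where care is needed is the sign and weight in the density transformation rule, and the verification that the factorisation $\si=r\hat\si$ with $\hat\si$ a smooth nonvanishing density of weight $\al$ is legitimately available up to $\partial M$ (which is precisely the definition of a defining density of weight $\al$ recalled just above the proposition). No deeper obstacle is expected.
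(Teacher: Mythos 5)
Your proof is correct. Note that the paper itself gives no argument for this statement—it is quoted from part (ii) of Proposition 2.3 of \cite{CG1}—and your computation (factor $\si=r\hat\si$, compare the scale $\nabla^{\si}$ with the boundary-smooth scale $\nabla^{\hat\si}$ via the density transformation rule to get $\Upsilon_a=-\tfrac{(dr)_a}{\al r}$, then invert the vector-field formula) is essentially the standard proof given in that reference.
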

\noindent Thus in the setting of \eqref{strat}  it follows that $(M\backslash
 M_{\mp},[\nabla^{\sigma}])$ is the order $\alpha$ projective
 compactification of $(M_{\pm},\nabla^{\sigma})$.
 For more on projective compactness,
 see \cite{CG2}.

$\ $

\section{BGG Equations and the Metrizability Equation} \label{BGGsec}
\setcounter{theorem}{0}

Now we give a brief overview of the BGG machinery of \cite{CD,CSS},
drawing from  the summaries in \cite{CGH2,CGM} the tools necessary for our
purposes. Given a tractor bundle $\mathcal{V}$, via its tractor
connection we form the exterior covariant derivative on
$\mathcal{V}$-valued forms to obtain the de Rham sequence twisted by
$\mathcal{V}$.
\[
0 \xrightarrow{} \mathcal{V} \xrightarrow{d^{\nabla}} \mathcal{V} \otimes \mathcal{E}_a \xrightarrow{d^{\nabla}} \mathcal{V} \otimes \mathcal{E}_{[ab]} \xrightarrow{d^{\nabla}} ...
\]
Then, via the canonical map
\[
\dagger : \mathcal{E}_a \rightarrow \operatorname{End}(\mathcal{T}) \ \ \ \ \operatorname{given\ by} \ \ \ \ \alpha_a \mapsto X^BZ_A^a \alpha_a
\]
one can construct a special case of the Kostant codifferential $\partial^{*}$, that gives a complex of natural bundle maps on $\mathcal{V}$-valued differential forms going in the opposite direction to the twisted de Rham sequence,
\[
0 \xleftarrow{\partial^{*}} \mathcal{V} \xleftarrow{\partial^{*}}  \mathcal{V} \otimes \mathcal{E}_a \xleftarrow{\partial^{*}}  \mathcal{V} \otimes \mathcal{E}_{[ab]} \xleftarrow{\partial^{*}}  ...
\]

The homology of this sequence gives natural subquotient bundles 
\begin{align*}
H_{k}(M,\mathcal{V})\colonequals \operatorname{ker}(\partial^{*})/\operatorname{im}(\partial^{*}).
\end{align*}
There are natural bundle projections $\Pi_{k}:
\operatorname{ker}(\partial^*) \subseteq \mathcal{V} \otimes
\mathcal{E}_{[ab...c]} \rightarrow H_{k}(M, \mathcal{V})$, from the
indicated $\mathcal{V}$-valued $k$-forms to the $k$th BGG homology.
Given a smooth section $\rho$ of $H_{k}(M, \mathcal{V})$ there is a
unique smooth section $L_{k}(\rho)$ of
$\operatorname{ker}(\partial^*)\subseteq \mathcal{V}\otimes
\mathcal{E}_{[a...b]}$ such that $\Pi_{k}(L_{k}(\rho))=\rho$ and
$\partial^{*}(d^{\nabla^{\mathcal{V}}} L_{k}(\rho))$$=0$.  This
characterizes a projectively invariant differential operator $L$
called the BGG splitting operator, or just the {\em splitting
  operator}. We can then define the $k$th BGG operator
$\Theta_{k}:H_{k}(M, \mathcal{V}) \rightarrow H_{k+1}(M, \mathcal{V})$
by $\rho \mapsto \Pi_{k+1}(d^{\nabla^{\mathcal{V}}}L_{k}(\rho))$. It
follows from these definitions that parallel sections of $\mathcal{V}$
are equivalent to (via $\Pi_0$ and $L_0$) a special class of so-called
{\em normal} solutions of the first BGG operator $\Theta_{0}:H_{0}(M,
\mathcal{V}) \rightarrow H_{1}(M, \mathcal{V})$ associated with
$\mathcal{V}$. Equations induced on the sections of $H_0(M,
\mathcal{V})$ by the BGG operator $\Theta_0$ are known as ({\em
  first}) {\em BGG equations}. Note that the BGG sequence, given by
the BGG operators, is not a complex in general, unless the connection
$\nabla^{\mathcal{V}}$ is flat.

We consider two related BGG equations, determined via application of the BGG machinery to $\mathcal{E}_{AB}$ and $\mathcal{E}^{AB}$, respectively. The second is the metrizability equation. 

\begin{proposition} \label{3.1} 
Let $(M, \bp)$ be a projective manifold. The first BGG operator $\Theta_0: H_0(M,\mathcal{E}_{(AB)}) \rightarrow H_1(M,\mathcal{E}_{(AB)})$, induces the following third order, totally symmetric, equation on $\tau \in \mathcal{E}(2)$,
\begin{align}
\nabla_{(a} \nabla _{b} \nabla_{c)} \tau + 4P_{(ab} \nabla _{c)} \tau + 2 \tau \nabla_{(a} P_{bc)} =0.
\end{align}
\end{proposition}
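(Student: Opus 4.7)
My plan is to apply the BGG machinery directly to $\mathcal{V} = \mathcal{E}_{(AB)}$ and compute $\Theta_0 = \Pi_1 \circ d^{\nabla^{\mathcal{V}}} \circ L_0$ in an arbitrary scale. The first step is to identify the cohomology bundles: in any scale the composition series of $\mathcal{E}_{(AB)}$ is $\mathcal{E}(2) \oplus \mathcal{E}_a(2) \oplus \mathcal{E}_{(ab)}(2)$, and an analysis of the Kostant codifferential (equivalently, Kostant's theorem applied to $S^2 V^*$ for $V$ the standard $\mathfrak{sl}(n+1)$-representation) gives $H_0(M, \mathcal{E}_{(AB)}) \cong \mathcal{E}(2)$ (the top slot) and $H_1(M, \mathcal{E}_{(AB)}) \cong \mathcal{E}_{(abc)}(2)$. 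This already accounts for the third-order and totally symmetric character of the expected equation, and identifies $\tau \in \mathcal{E}(2)$ as the BGG primary.

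Next I would determine the splitting operator $L_0 \colon \mathcal{E}(2) \to \mathcal{E}_{(AB)}$. Writing
\[
L_0(\tau) = Y_A Y_B \tau + 2 Y_{(A} Z^b_{B)} \lambda_b + Z^a_A Z^b_B \zeta_{ab}
\]
in a scale $\nabla \in \bp$, with $\lambda_b \in \mathcal{E}_b(2)$ and $\zeta_{ab} \in \mathcal{E}_{(ab)}(2)$ to be determined, and using the derivative identities $\nabla_c Y_A = P_{ca} Z^a_A$ and $\nabla_c Z^a_A = -\delta^a_c Y_A$ (which follow from the cotractor connection formula) together with the Leibniz rule, I compute the three slots of $\nabla_c L_0(\tau)$. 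Imposing the normalisation $\partial^*(\nabla^{\mathcal{T}^*} L_0(\tau)) = 0$ then forces
\[
\lambda_b = \tfrac{1}{2} \nabla_b \tau, \qquad \zeta_{ab} = \tfrac{1}{2} \nabla_{(a} \nabla_{b)} \tau + P_{ab} \tau.
\]
Equivalently, $L_0(\tau) = \tfrac{1}{2} D_{(A} D_{B)} \tau$, which may be verified directly from the definition of the Thomas $D$-operator.

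With these values substituted, the top slot of $\nabla_c L_0(\tau)$ vanishes by construction; the middle slot reduces to $\tfrac{1}{2} \nabla_{[c}\nabla_{b]} \tau$ which is zero because the curvature of a scale on the density line $\mathcal{E}(2)$ vanishes; and the bottom slot comes out as
\[
\tfrac{1}{2} \nabla_c \nabla_{(a} \nabla_{b)} \tau + (\nabla_c P_{ab}) \tau + P_{ab} \nabla_c \tau + P_{c(a} \nabla_{b)} \tau.
\]
The projection $\Pi_1$ to $H_1 \cong \mathcal{E}_{(abc)}(2)$ is then the totally symmetric part in $(abc)$; a short index manipulation using symmetry of $P$ shows $P_{(c(a} \nabla_{b))} \tau = P_{(ab} \nabla_{c)} \tau$ (both equal $\tfrac{1}{3}(P_{ab}\nabla_c + P_{bc}\nabla_a + P_{ca}\nabla_b)\tau$), and twice the symmetrised bottom slot then yields the claimed equation with coefficients $1$, $4$, and $2$. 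The main obstacle throughout is getting the normalisation of $\partial^*$ on $\mathcal{E}_{(AB)}$-valued $1$-forms correct (to pin down the factors of $\tfrac{1}{2}$ in $\lambda$ and $\zeta$) and tracking density weights consistently; once the splitting operator is in hand, the rest is a mechanical tractor-calculus computation.
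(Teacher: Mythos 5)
Your proposal is correct and follows essentially the same route as the paper: determine the splitting operator by imposing $\partial^*(\nabla^{\mathcal{T}^*}L_0(\tau))=0$ in a chosen splitting (yielding $\lambda_a=\tfrac12\nabla_a\tau$ and $\zeta_{ab}=\tfrac12\nabla_{(a}\nabla_{b)}\tau+P_{ab}\tau$), observe that only the $Z^a_AZ^b_B$ slot of $\nabla_c L_0(\tau)$ survives, and project it to $H_1\cong\mathcal{E}_{(abc)}(2)$ by total symmetrization, which reproduces the stated coefficients. The identification of the homology bundles via the Kostant codifferential and the closing observation that $L_0(\tau)=\tfrac12 D_{(A}D_{B)}\tau$ are consistent with, and only mild embellishments of, the paper's argument.
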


\begin{proof}
Given a section $H_{AB}\in \Gamma(\mathcal{E}_{(AB)})$ we begin by computing $\nabla_{c}^{\mathcal{T^{*}}} {H_{AB}}$.
\begin{align*}
\nabla_{c}^{\mathcal{T^{*}}} {H_{AB}} & = 
\nabla_{c}^{\mathcal{T^{*}}}
\left(
\begin{array}{cc}
\tau  \\
\lambda _{a}\\
\zeta_{ab} \\
\end{array} 
\right)
= \left(
\begin{array}{cc}
\nabla_{c} \tau - 2 \lambda _{c} \\
\nabla _{c} \lambda _{a} + P_{ca} \tau - \zeta_{ca}\\
\nabla_{c} \zeta_{ab} + 2P_{c(b} \lambda_{a)} \\
\end{array} 
\right). \\
\end{align*}

Then $\partial^*(\nabla^{\mathcal{T}^*}_cH_{AB}) = 2(X^DZ^c_{(A}\nabla_c^{\mathcal{T}^*}H_{B)D})$. To explicitly determine the splitting operator we set $\partial^*(\nabla^{ \mathcal{T}^*} H_{AB})=0$, which yields the following system of equations;
\begin {align*}
\zeta_{ab} & = \nabla _{a} \lambda _{b} + P_{ab} \tau , \\
\lambda _{a} & =  \frac {1}{2}\nabla _{a} \tau.  
\end{align*}
Thus a section  in the image of the splitting operator is of the form
\begin{align*}
{H_{AB}} = L(\tau)  = 
\left(
\begin{array}{cc}
\tau  \\
\frac{1}{2}\nabla_{a}\tau \\
\frac{1}{2}\nabla_{a}\nabla_{b}\tau + P_{ab}\tau \\
\end{array} 
\right).
\end{align*}
Note that the $Z^a_AZ^b_B$ component of $\nabla_c^{\mathcal{T}^*}L(\tau)$ is the only non-vanishing component. It is precisely, 
\begin{align*}
\frac{1}{2}\nabla _{a} \nabla_{b} \nabla_{c} \tau + \tau \nabla _{a} P_{bc} + P_{bc}\nabla _{a} \tau + \frac{1}{2} P_{ac} \nabla_{b} \tau + \frac{1}{2}P_{ab}\nabla _{c} \tau.
\end{align*}
Via the Kostant codifferential $\partial^*$ it is straightforward\footnote{For details see Sec. 3.1 of \cite{CG1}} to verify that $H_1(M,\mathcal{E}_{AB})=\mathcal{E}_{(ab)c}(2)/\mathcal{E}_{a[bc]}(2) \cong \mathcal{E}_{(abc)}(2)$. Thus, symmetrizing the expression above, gives 
\begin{align}
\Theta_0(\tau) \colonequals \Pi_1(d^{\nabla}L_0(\tau)) = \nabla_{(a} \nabla _{b} \nabla_{c)} \tau + 2 \tau \nabla_{(a} P_{bc)} + 4P_{(ab} \nabla _{c)} \tau.
\end{align}
\end{proof}

\begin{proposition} \label{3.2}
Let $(M, \bp)$ be a projective manifold. The first BGG operator $\Theta_0: H_0(M,\mathcal{E}^{(AB)}) \rightarrow H_1(M,\mathcal{E}^{(AB)})$, induces the following projectively invariant first order equation on $\mathcal{E}^{ab}(-2)$,
\begin{align}
\nabla_{c} \zeta^{ab} - \frac{1}{n+1}\delta_{c}^{a}\nabla_{d}\zeta^{db} - \frac{1}{n+1}\delta_{c}^{b}\nabla_{d}\zeta^{ad} = 0.
\end{align}
\end{proposition}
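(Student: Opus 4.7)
The plan is to mirror the structure of the proof of Proposition~\ref{3.1}, now working with the contravariant symmetric tractor bundle $\mathcal{E}^{(AB)}$ in place of $\mathcal{E}_{(AB)}$. First I would fix a connection $\nabla\in\bp$ and expand a general section $H^{AB}\in\Gamma(\mathcal{E}^{(AB)})$ in its splitting form
\begin{equation*}
H^{AB} = W^A_aW^B_b\,\zeta^{ab} + 2X^{(A}W^{B)}_a\,\lambda^a + X^AX^B\,\tau,
\end{equation*}
as in equation~(4), where $\zeta^{ab}\in\mathcal{E}^{(ab)}(-2)$, $\lambda^a\in\mathcal{E}^a(-2)$, and $\tau\in\mathcal{E}(-2)$. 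Then I would compute $\nabla^{\mathcal{T}}_c H^{AB}$ using the Leibniz rule together with the identities $\nabla_cX^A=W^A_c$ and $\nabla_cW^A_a=-X^AP_{ca}$ (which follow directly from the formula for $\nabla^{\mathcal{T}}$ on $\mathcal{E}^A$). A short computation yields three slots: a top slot $\nabla_c\zeta^{ab}+2\delta^{(a}_c\lambda^{b)}$ (coefficient of $W^A_aW^B_b$), a middle slot $\nabla_c\lambda^a+\delta^a_c\tau-P_{cb}\zeta^{ab}$ (coefficient of $2X^{(A}W^{B)}_a$), and a bottom slot $\nabla_c\tau-2P_{cb}\lambda^b$ (coefficient of $X^AX^B$).

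Next I would determine the splitting operator by requiring $\partial^*(\nabla^{\mathcal{T}}H^{AB})=0$. The dual of the formula used in Proposition~\ref{3.1} gives the Kostant codifferential on contravariant symmetric tractor-valued $1$-forms as
\begin{equation*}
\partial^*\phi^{AB}_c = 2X^{(A}Z^c_D\,\phi^{B)D}{}_c,
\end{equation*}
which uses $Z^c_D$ to extract a top slot with respect to one tractor index and then reinserts it into the bottom slot via $X^{(A}$. Applying this to $\nabla^{\mathcal{T}}_c H^{AB}$, and using $Z^c_DW^D_b=\delta^c_b$ and $Z^c_DX^D=0$, the vanishing of the middle and bottom slots of $\partial^*(\nabla^{\mathcal{T}}H^{AB})$ reduces to the two scalar equations $\nabla_b\zeta^{ab}+(n+1)\lambda^a=0$ and $\nabla_a\lambda^a+n\tau-P_{ab}\zeta^{ab}=0$. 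These solve uniquely for $\lambda^a=-\tfrac{1}{n+1}\nabla_b\zeta^{ab}$ and then $\tau$ in terms of $\zeta$, determining $L_0(\zeta)$ explicitly.

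Finally, the first BGG operator is $\Theta_0(\zeta)=\Pi_1(\nabla^{\mathcal{T}}L_0(\zeta))$. By construction the middle and bottom slots of $\nabla^{\mathcal{T}}L_0(\zeta)$ lie in the image of $\partial^*$-related terms and vanish modulo those; so the operator is obtained by reading off the top slot, which yields
\begin{equation*}
\nabla_c\zeta^{ab} + \delta^a_c\lambda^b + \delta^b_c\lambda^a = \nabla_c\zeta^{ab} - \tfrac{1}{n+1}\delta^a_c\nabla_d\zeta^{db} - \tfrac{1}{n+1}\delta^b_c\nabla_d\zeta^{ad}
\end{equation*}
after substituting $\lambda^a=-\tfrac{1}{n+1}\nabla_d\zeta^{ad}$. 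The main subtlety, rather than an obstacle, is identifying $H_1(M,\mathcal{E}^{(AB)})$ as the irreducible component $\mathcal{E}^{(ab)}{}_c(-2)$ cut out by the condition that the $\delta^a_c$-trace vanishes; that the displayed expression is manifestly trace-free in $c$ against $a$ or $b$ confirms that it is the component lying in $H_1$, and projective invariance is automatic because the construction is entirely in terms of the tractor calculus.
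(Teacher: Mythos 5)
Your proposal is correct and follows essentially the same route as the paper: expand $H^{AB}$ in a splitting, compute $\nabla^{\mathcal{T}}_cH^{AB}$ (your slot formulas and the identities $\nabla_cX^A=W^A_c$, $\nabla_cW^A_a=-X^AP_{ca}$ agree with the paper's display), impose $\partial^*(\nabla^{\mathcal{T}}H)=0$ to get the trace conditions $\lambda^a=-\tfrac{1}{n+1}\nabla_b\zeta^{ab}$ and the formula for the bottom slot, and read off the trace-free top slot of $\nabla^{\mathcal{T}}L_0(\zeta)$ as $\Theta_0(\zeta)$, identifying $H_1(M,\mathcal{E}^{(AB)})\cong\mathring{\mathcal{E}}^{(ab)}_c(-2)$. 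No gaps of substance; the differences from the paper's proof are purely notational (your $\tau$ is the paper's $\rho$).
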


\begin{proof}
Let $H^{AB}\in\Gamma(\mathcal{E}^{(AB)})$. Then we compute $\nabla_{c}^{\mathcal{T}} {H^{AB}}$.
\begin{align*}
\nabla_{c}^{\mathcal{T}} {H^{AB}} & = 
\nabla_{c}^{\mathcal{T}}
\left(
\begin{array}{cc}
\zeta^{ab} \\
\lambda ^{a}\\
\rho  \\
\end{array} 
\right) 
= \left(
\begin{array}{cc}
\nabla_{c} \zeta^{ab} + 2\delta_{c}^{(a} \lambda ^{b)} \\
\nabla _{c} \lambda ^{a} + \delta ^{a}_{c}\rho - P_{cb}\zeta^{ab} \\
\nabla_{c} \rho - 2P_{ca} \lambda ^{a} \\
\end{array} 
\right). \\
\end{align*}
Then $\partial^*(\nabla_c^{\mathcal{T}}H^{AB})= Z_D^cX^{(A}\nabla_c^{\mathcal{T}}H^{B)D}=0$ gives the following system of equations.
\begin{align}
\nabla_{c} \zeta^{ab} & = - 2\delta_{c}^{(a} \lambda ^{b)}, \\
\nabla_{c} \lambda^{a} & = P_{cb} \zeta^{ab} - \delta^{a}_{c}\rho ,
\end{align}
Tracing gives

\begin{align}
\lambda^{a} & = \frac{-1}{n+1}\nabla_{b} \zeta^{ab}, \\
\rho & = \frac{1}{n} P_{ba} \zeta^{ab} + \frac{1}{n(n+1)} \nabla _{a} \nabla_{b} \zeta^{ab}.
\end{align}
It follows that a symmetric bilinear form, $H^{AB}$, on the cotractor bundle in the image of the splitting operator is of the form
\begin{align*}
{H^{AB}} = L(\zeta^{ab}) = 
\left(
\begin{array}{cc}
\zeta^{ab} \\
\frac{-1}{n+1}\nabla_{b} \zeta^{ab}  \\
\frac{1}{n} P_{ba} \zeta^{ab} + \frac{1}{n(n+1)} \nabla _{a} \nabla_{b} \zeta^{ab}  \\
\end{array} 
\right).
\end{align*}
Substituting gives the following first-order BGG equation on $\mathring{\mathcal{E}}_c^{(ab)} (-2)$ (which denotes the trace-free component of $\mathcal{E}_c^{(ab)} (-2)$)

\begin{align}
\operatorname{trace-free}(\nabla_c\ze^{ab}) = 0 \Longleftrightarrow  \nabla_{c} \zeta^{ab} - \frac{1}{n+1}\delta_{c}^{a}\nabla_{d}\zeta^{db} - \frac{1}{n+1}\delta_{c}^{b}\nabla_{d}\zeta^{ad}=0.
\end{align}
Projective invariance follows from a straightforward computation. It is easy to see that $H_1(M, \mathcal{E}^{AB}) = \mathring{\mathcal{E}}^{(ab)}_c(-2)$. So we have given the explicit form of $\Theta_0(\zeta^{ab})\colonequals \Pi_1(d^{\nabla}L(\zeta^{ab}))=0$, which is  the metrizability equation (\ref{meteq}). Thus the metrizability equation of  Mikes and Sinjukov is seen to be a first BGG equation.  
\end{proof}
In summary, we have the following:

\begin{corollary} \label{3.3}
Let $\tau \in \Gamma (\mathcal{E}(2))$ and $\ze \in \Gamma (\mathcal{E}^{ab}(-2))$. Then their images under their respective splitting operators, both denoted by $L$, are given by
\begin{align*}
L(\tau)  = 
\left(
\begin{array}{cc}
\tau  \\
\frac{1}{2}\nabla_{a}\tau \\
\frac{1}{2}\nabla_{a}\nabla_{b}\tau + P_{ab}\tau \\
\end{array} 
\right) 
\ \ \ \text{\normalfont and} \ \ \ 
L(\zeta^{ab}) = 
\left(
\begin{array}{cc}
\zeta^{ab} \\
\frac{-1}{n+1}\nabla_{b} \zeta^{ab}  \\
\frac{1}{n} P_{ba} \zeta^{ab} + \frac{1}{n(n+1)} \nabla _{a} \nabla_{b} \zeta^{ab}  \\
\end{array} 
\right).
\end{align*}
\end{corollary}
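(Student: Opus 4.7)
The plan is straightforward since the corollary is simply a repackaging of computations already carried out inside the proofs of Propositions \ref{3.1} and \ref{3.2}. Recall that the (first) BGG splitting operator $L$ associated to a tractor bundle $\mathcal{V}$ is characterized uniquely by the two conditions $\Pi_0 \circ L = \operatorname{id}$ and $\partial^*(d^{\nabla^{\mathcal{V}}}L(\cdot)) = 0$. Once one fixes a splitting $\nabla\in\bp$ and expands a section of $\mathcal{E}_{(AB)}$ (respectively $\mathcal{E}^{(AB)}$) into its slots determined by that splitting, the determination of $L$ reduces to a small linear algebraic system in the subleading slots.

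For $\tau\in\Gamma(\mathcal{E}(2))$, I would set the leading slot of $L(\tau)$ equal to $\tau$ and denote the unknown middle and bottom slots by $\lambda_a$ and $\zeta_{ab}$. Applying the cotractor connection and then the Kostant codifferential $\partial^*$ to the resulting $\mathcal{E}_{(AB)}$-valued one-form, as computed in the proof of Proposition \ref{3.1}, yields $\lambda_a = \tfrac{1}{2}\nabla_a\tau$ and $\zeta_{ab} = \nabla_a\lambda_b + P_{ab}\tau$. Substituting the first into the second gives precisely the formula stated for $L(\tau)$.

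For $\zeta^{ab}\in\Gamma(\mathcal{E}^{ab}(-2))$, the analogous strategy applies with one additional step: after applying $\partial^*$ to $\nabla^{\mathcal{T}}H^{AB}$ one obtains an underdetermined-looking system for the slots $\lambda^a$ and $\rho$, which is then solved by taking appropriate traces to extract $\lambda^a = -\tfrac{1}{n+1}\nabla_b\zeta^{ab}$ and $\rho = \tfrac{1}{n}P_{ba}\zeta^{ab} + \tfrac{1}{n(n+1)}\nabla_a\nabla_b\zeta^{ab}$. This is exactly the content of the proof of Proposition \ref{3.2}.

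The only step where care is needed is that $\partial^*$ has slightly different explicit realisations in the cotractor and tractor cases (it is a contraction with $X$ via $\dagger$, but against tractor indices of different variance), though in both cases it is simply the symmetrisation of the form-index contracted into the tractor indices via $X$ and $Z$. No additional argument is required beyond collecting the formulas established in the two propositions, so the corollary is immediate.
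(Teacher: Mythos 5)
Your proposal is correct and follows essentially the same route as the paper: Corollary \ref{3.3} is just the collection of the splitting-operator formulas already derived in the proofs of Propositions \ref{3.1} and \ref{3.2}, namely solving $\partial^*(\nabla^{\mathcal{T}^*}L(\tau))=0$ for the lower slots and solving (by tracing) the system from $\partial^*(\nabla^{\mathcal{T}}H^{AB})=0$ for $\lambda^a$ and $\rho$. Nothing further is needed.
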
 

Note that a parallel section of a tractor bundle is necessarily in the image of the splitting operator. 

\section{Submanifolds and Stratifications} \label{results}
\setcounter{theorem}{0}

Recall we denote the degeneracy loci and the zero loci of tensors and densities, respectively, by $\mathcal{D}(-)$ and $\mathcal{Z}(-)$. Let $\mathcal{R}$ denote the map taking a tensor to its rank. We first consider a very simple case that is related to our study of degenerate solutions to the metrizability equation. 

\begin{lemma} \label{4.1}
Let $(M, \bp)$ be a projective manifold equipped with $\tau \in \Gamma(\mathcal{E}(2))$ such that the (possibly degenerate)
symmetric bilinear form, $L(\tau)$, on the tractor bundle, of signature $(p,q,r)$, satisfies 
$\mathcal{R}(\nabla\nabla\tau) < \mathcal{R} (L(\tau ))$ on 
$\mathcal{Z}(\tau)$. Then the following hold:

$(i)$ Either $\tau$ is nowhere zero or $L(\tau)$ is not definite and its 
zero locus is a smoothly embedded separating hypersurface $M_0$. If 
$L(\tau)$ has signature $(p,q,r)$ then $M$
is stratified by the strict sign of the 2-density $\tau$ and is partitioned as 
$$
M=  \coprod\limits_{i \in \{+,0,-\}} M_i
$$ with $\tau >0$ on $M_+$ and $\tau <0$ on $M_-$, and 
$\tau = 0$ on $M_0$. 
The components $M_+$, $M_0$, and $ M_{-}$ are not
necessarily each connected. 

$(ii)$ If $r=0$ and $M_0 \neq \varnothing$ then $M_0$ inherits a conformal structure $(M_0, \bc)$ of signature $(p-1,q-1)$. 

$(iii)$ If $M$ is closed, then $(M\backslash M_{\mp},\bp)$ is a projective compactification of order 2 of $(M_{\pm},\nabla^{\tau})$, with boundary $M_0$, where $\nabla^{\tau}\in \bp$ is the connection that preserves $\tau$ away from $\mathcal{Z}(\tau)$.\\
\end{lemma}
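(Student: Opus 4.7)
The plan is to exploit the explicit formula for $L(\tau)$ from Corollary \ref{3.3}:
\[
L(\tau)=\begin{pmatrix}\tau\\ \tfrac{1}{2}\nabla_a\tau\\ \tfrac{1}{2}\nabla_a\nabla_b\tau+P_{ab}\tau\end{pmatrix},
\]
which on $\mathcal{Z}(\tau)$ reduces to a symmetric tractor whose $X^AX^B$-component vanishes.

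First I would show that $\nabla_a\tau\neq 0$ on $\mathcal{Z}(\tau)$. If instead $\tau=0$ and $\nabla_a\tau=0$ at some point, the nonzero entries of $L(\tau)$ sit entirely in the bottom slot $\tfrac12\nabla_a\nabla_b\tau$, so $\mathcal{R}(L(\tau))=\mathcal{R}(\nabla\nabla\tau)$, contradicting the hypothesis. Note that on $\mathcal{Z}(\tau)$ the covector $\nabla_a\tau$ is independent of the connection in $\bp$ since two such connections differ by a term proportional to $\tau$. Hence $\tau$ is a defining density of weight $2$ for $M_0\colonequals\mathcal{Z}(\tau)$, so $M_0$ is a smoothly embedded hypersurface. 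The canonical orientation of $\mathcal{E}(1)$ equips $\tau$ with a well-defined sign off $M_0$, yielding the decomposition $M=M_+\sqcup M_0\sqcup M_-$ asserted in part (i). Contracting $L(\tau)_{AB}$ with $X^AX^B$ returns $\tau$, so at any point of $M_0$ the tractor $X^A$ is a nonzero isotropic direction for $L(\tau)$, ruling out definiteness.

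For part (ii), with $r=0$ and $M_0\neq\varnothing$, I would build the conformal structure using the tractor filtration. At a point of $M_0$ the Thomas $D$-operator gives $D_A\tau=Z^a_A\nabla_a\tau\neq 0$, defining a rank-$n$ subbundle $\ker(D_A\tau)\subset\mathcal{E}^A|_{M_0}$. A direct check using the contraction table for $X^A,W^A_a,Y_A,Z^a_A$ shows that $X^A\subset\ker(D_A\tau)$ and that $\ker(D_A\tau)$ is exactly the $L(\tau)$-orthogonal complement of $X^A$. Since $L(\tau)$ is nondegenerate of signature $(p,q)$ with $p+q=n+1$ and $X^A$ is null, the quotient $\ker(D_A\tau)/\langle X^A\rangle$ inherits a nondegenerate symmetric bilinear form of signature $(p-1,q-1)$. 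Chasing the composition series $0\to\mathcal{E}(-1)\xrightarrow{X^A}\mathcal{E}^A\xrightarrow{Z^a_A}\mathcal{E}^a(-1)\to 0$ restricted to $M_0$ identifies this quotient canonically with $TM_0\otimes\mathcal{E}(-1)|_{M_0}$, so the induced form determines a conformal class $\bc$ on $M_0$ of signature $(p-1,q-1)$.

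For part (iii), I would apply Proposition \ref{2.2} directly. On $M\setminus M_-=M_+\cup M_0$, viewed as a manifold with boundary $M_0$, the weight-$2$ density $\tau$ is a defining density in the sense of Section \ref{proj-c-sec}, so $\tau$ is a scale on the interior $M_+$ and the associated connection $\nabla^\tau$ is projectively compact of order $2$. The argument for $M\setminus M_+$ is parallel, using $-\tau$ as the defining density (which determines the same connection $\nabla^\tau$). The main technical obstacle I anticipate is the identification in part (ii): one must chase the tractor filtrations with appropriate care for density weights to confirm that the induced form on $\ker(D_A\tau)/\langle X^A\rangle$ corresponds to a well-defined conformal class on $M_0$ of signature $(p-1,q-1)$. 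Parts (i) and (iii) follow relatively directly from the explicit splitting formula for $L(\tau)$ and Proposition \ref{2.2}.
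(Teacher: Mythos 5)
Your proposal is correct and follows essentially the same route as the paper: the same rank-comparison argument (if $\tau$ and $\nabla\tau$ vanished simultaneously then $\mathcal{R}(L(\tau))=\mathcal{R}(\nabla\nabla\tau)$) shows $\nabla\tau\neq 0$ on $\mathcal{Z}(\tau)$, so $\tau$ is a weight-$2$ defining density, yielding the smooth separating hypersurface, the sign stratification, and, via Proposition \ref{2.2}, the order-$2$ projective compactness. For part (ii) you simply write out explicitly the tractor-filtration argument (the form induced by $L(\tau)$ on $\ker(D_A\tau)/\langle X^A\rangle\cong TM_0\otimes\mathcal{E}(-1)|_{M_0}$ having signature $(p-1,q-1)$) that the paper cites from Theorem 3.2 of \cite{CGH1}, and this matches the intended argument.
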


\begin{proof}
First we will show that $\mathcal{Z}(\tau) \cap \mathcal{Z}(\nabla_{a}\tau) = \varnothing$. Suppose, for contradiction, that $\mathcal{Z}(\tau) \cap \mathcal{Z}(\nabla_{a}\tau) \neq \varnothing$. Then, using the formula for $L(\tau)$, at $x \in \mathcal{Z}(\tau) \cap \mathcal{Z}(\nabla_{a}\tau)$, $H_{AB}$ reduces to 
\begin{align*}
{L(\tau)} & = 
\left(
\begin{array}{cc}
0 \\
0  \\
\frac{1}{2}\nabla_{a}\nabla_{b}\tau \\
\end{array} 
\right),
\end{align*}
giving us that $\mathcal{R}(\nabla_{a}\nabla_{b}\tau) = \mathcal{R}(L(\tau))$, a contradiction. 

Now, choosing a nonvanishing $\gamma \in \Gamma(\mathcal{E}(1))$. Then $\gamma ^{-2} \tau$ is a defining function for $M_0$, whence it follows from the implicit function theorem that $\mathcal{Z}(\tau)$ is a smoothly embedded submanifold of codimension $1$, i.e. a smoothly embedded hypersurface. It is clearly separating, since $\nabla\tau \neq 0$ along $\mathcal{Z}(\tau)$, so $M$ decomposes as the disjoint union of $M_+$, $M_0$, and $M_-$.

Thus $\tau$ is a defining density of weight 2 for $M_0$, so the claim
of projective compactness of $\nabla^\tau$ follows at once from
Proposition \ref{2.2}.  The (possibly degenerate) conformal
structure on the closed component $M_0$ follows by the same argument
as in Theorem \ref{3.2} of \cite{CGH1}.
\end{proof}

Our main application of Lemma \ref{4.1} is the following immediate consequence.

\begin{proposition} \label{4.2}
Let $(M, \bp)$ be a projective manifold equipped with $\tau\in \mathcal{E}(2)$ such that $L(\tau)$ is nondegenerate. Then $\mathcal{Z}(\tau)$ is either empty or it consists of smoothly embedded hypersurfaces of $M$. 
\end{proposition}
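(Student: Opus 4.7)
The plan is to derive Proposition 4.2 directly from Lemma 4.1 by verifying that the nondegeneracy of $L(\tau)$ forces the rank hypothesis of Lemma 4.1 automatically at every zero of $\tau$.

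First I would recall the explicit form of the splitting operator from Corollary 3.3: given a splitting,
\begin{align*}
L(\tau) = \left(\begin{array}{c} \tau \\ \tfrac{1}{2}\nabla_a \tau \\ \tfrac{1}{2}\nabla_a\nabla_b \tau + P_{ab}\tau \end{array}\right).
\end{align*}
Since $L(\tau)$ is a section of $\cE_{(AB)}$ on the rank $n+1$ standard cotractor bundle, the hypothesis that $L(\tau)$ is nondegenerate means $\mathcal{R}(L(\tau)) = n+1$ pointwise.

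The key observation is that $\nabla_a\nabla_b \tau$ is a section of $\cE_{(ab)}(2)$, i.e.\ a symmetric bilinear form on the $n$-dimensional tangent spaces of $M$, so $\mathcal{R}(\nabla\nabla\tau) \leq n$ at every point. In particular, on the zero locus $\mathcal{Z}(\tau)$ we have
\begin{equation*}
\mathcal{R}(\nabla\nabla\tau) \;\leq\; n \;<\; n+1 \;=\; \mathcal{R}(L(\tau)),
\end{equation*}
so the rank hypothesis of Lemma \ref{4.1} holds trivially.

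Having checked this, I would simply invoke Lemma \ref{4.1}(i): either $\tau$ is nowhere zero, in which case $\mathcal{Z}(\tau)=\varnothing$, or $\mathcal{Z}(\tau)$ is a smoothly embedded hypersurface of $M$ (indeed a separating one, although the statement of the proposition only records the local conclusion). I do not expect any real obstacle here, since the only content is the observation that the ambient tractor bundle has one more dimension than the tangent bundle, which forces a strict rank drop of the manifest $\nabla\nabla\tau$-block at any zero of $\tau$.
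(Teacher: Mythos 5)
Your proposal is correct and matches the paper's reasoning: the paper presents Proposition \ref{4.2} as an immediate consequence of Lemma \ref{4.1}, precisely because nondegeneracy gives $\mathcal{R}(L(\tau))=n+1$ while $\nabla\nabla\tau$, being a bilinear form on the $n$-dimensional tangent space, has rank at most $n$, so the rank hypothesis of the Lemma is automatic on $\mathcal{Z}(\tau)$. Your added remark that the Proposition only records the local (non-separating) part of the Lemma's conclusion is also consistent with the paper.
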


This generalizes a result from \cite{CG2} where $(M,\bp)$ was shown to
decompose according to the strict sign of $\tau$, as in Proposition
\ref{4.2}, when $L(\tau)$ was assumed to be nondegenerate {\em and parallel}. Our
proposition here drops the parallel assumption, instead needing only the
nondegeneracy of $L(\tau)$.

A projective manifold equipped with a nondegenerate symmetric bilinear
form, $L(\tau)$, on the tractor bundle has a canonical
pseudo-Riemannian structure $(M_{\pm},g)$ on the open orbits $M_{\pm}$
where $g_{ab}\colonequals P_{ab}$. The projective Schouten is seen to
be nondegenerate since $L(\tau) = (\tau, 0,P_{ab}\tau)^t$ when working
in the scale $\nabla^{\tau}$ preserving the density $\tau$. It is
evident, by working in the splitting $\nabla^{\tau}$ and applying the
tractor connection, that if the metric $g$ is preserved by
$\nabla^{\tau}$, whence $\nabla^{\tau}$ is the Levi-Civita connection
for $g$, then $L(\tau)$ is in fact parallel. But, in general, the
Levi-Civita connection corresponding to this metric need not lie in
the projective class $\bp$ (however the structure can still be of considerable
interest, see e.g.\ \cite{Mar}).

\subsection{Degenerate solutions of the metrizability equation: the order 2 projective compactification case}\label{order2dsec}

Next we will consider the case where we are given a solution $\ze$ to the metrizability equation and hence a symmetric bilinear form on the cotractor bundle, given by $H^{AB}=L(\zeta^{ab})\in \Gamma(\mathcal{E}^{(AB)})$. In this subsection we will address the case where $L(\zeta^{ab})$ is everywhere nondegenerate. We will see that, for a solution $\ze$ of the metrizability equation, nondegeneracy of $L(\ze)$ is enough to imply that the degeneracy locus of $\zeta^{ab}$, when nonempty, is a smoothly embedded hypersurface. Where it exists, we let $\Phi_{AB} \colonequals (H^{AB})^{-1}$ denote the pointwise inverse of $H^{AB}$. Given a splitting, say $\nabla\in \bp$, $H$ and $\Phi$ can be written

\begin{align}
H^{AB}  = 
\left(
\begin{array}{cc}
\ze^{ab}  \\
\lambda^a \\
\rho \\
\end{array} 
\right) 
\ \ \ \text{\normalfont and} \ \ \ 
{\Phi_{AB}} = 
\left(
\begin{array}{cc}
\tau  \\
\eta _{a}\\
\phi_{ab} \\
\end{array} 
\right),
\end{align}
for smooth sections $\rho \in \Gamma (\mathcal{E}(-2))$, $\lambda^a
\in \Gamma (\mathcal{E}^a(-2))$, $\ze^{ab} \in \Gamma
(\mathcal{E}^{ab}(-2))$, $\tau \in \Gamma (\mathcal{E}(2))$, $\eta_a
\in \Gamma (\mathcal{E}_a(2))$, and $\phi_{ab} \in \Gamma
(\mathcal{E}_{ab}(2))$.
%\edz{RS: $\tau =X^AX^B \Phi_{AB}$ here}

\begin{proposition} \label{4.3}
Let $(M, \bp)$ be a projective manifold equipped with a section $\ze^{ab}\in \Gamma(\mathcal{E}^{(ab)}(-2))$, such that $H^{AB}\colonequals L(\zeta^{ab})$ is everywhere nondegenerate. Suppose that the pointwise inverse of $H^{AB}$ is given by $\Phi_{AB}$, as above, in the splitting determined by a connection $\nabla \in \bp$. Then
\begin{align*}
\nabla_{i} \tau & = 2 \eta_{i} - 2\tau \eta_{c} \psi^{c}_{i} -\tau^{2}\omega_{i} - \chi^{cb}_{i}\eta_{c}\eta_{b}
\end{align*}
holds for smooth sections $\chi^{cb}_{i} \in \mathcal{E}^{cb}_{i}(-2)$, $\psi^{c}_{i} \in \mathcal{E}^{c}_{i}(-2)$, and $\omega_{i} \in \mathcal{E}_{i}(-2)$, where $\chi$ is given by
\begin{align*}
\chi^{cb}_{i} & = \nabla_{i} \zeta^{cb} - \frac{1}{n+1}\delta_{i}^{c}\nabla_{d}\zeta^{db} - \frac{1}{n+1}\delta_{i}^{b}\nabla_{d}\zeta^{cd}.
%\psi^{c}_{i} & = \frac{1-n}{n(n+1)}\nabla_{i}\nabla_{b}\zeta^{cb} + \frac{1-n}{n}P_{bi}\zeta^{cb}, \\
%\omega_{i} & = \frac{1}{n}\zeta^{cb}\nabla_{i}P_{bc} + \frac{1}{n}P_{bc}\nabla_{i}\zeta^{cb} + \frac{1}{n(n+1)}\nabla_{i}\nabla_{c}\nabla_{b}\zeta^{cb} + \frac{2}{n+1}P_{ic}\nabla_{b}\zeta^{cb}.
\end{align*}
\end{proposition}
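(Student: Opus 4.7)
The strategy is to differentiate the pointwise inverse relation $\Phi_{AB}H^{BC}=\delta_A^C$ using the tractor connection and then extract the top slot. Because $\delta_A^C$ is parallel, this gives the Jacobi-type identity
$$\nabla_i^{\mathcal T}\Phi_{AE}=-\Phi_{AB}\Phi_{EC}\nabla_i^{\mathcal T}H^{BC}.$$
From the cotractor formula recalled in Section~\ref{background}, the $Y_AY_E$-component of $\nabla_i^{\mathcal T^*}\Phi_{AE}$ reads $\nabla_i\tau-2\eta_i$, and it is isolated by contracting the whole identity with $X^AX^E$. Thus the problem reduces to computing the right-hand side of the above in the same contraction.

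Working in a chosen splitting $\nabla\in\bp$, I would next compute $X^A\Phi_{AB}$. Using $Y_AX^A=1$ and $Z_A^aX^A=0$, this equals $Y_B\tau+Z_B^a\eta_a$, so the right-hand side becomes
$$-(Y_B\tau+Z_B^a\eta_a)(Y_C\tau+Z_C^b\eta_b)\nabla_i^{\mathcal T}H^{BC},$$
which splits naturally into three homogeneous pieces in $\tau^2$, $\tau\eta$, and $\eta\eta$. Substituting the explicit tractor presentation of $\nabla_i^{\mathcal T}H^{BC}$ from the proof of Proposition~\ref{3.2}, the $X^BX^C$ slot pairs with $\tau^2$ to produce $\tau^2(\nabla_i\rho-2P_{ia}\lambda^a)$; the $X^{(B}W^{C)}_a$ slot pairs with $2\tau\eta$ to give $2\tau\eta_c(\nabla_i\lambda^c+\delta_i^c\rho-P_{ib}\zeta^{cb})$; and the $W^B_cW^C_b$ slot pairs with $\eta_c\eta_b$ to yield $\eta_c\eta_b(\nabla_i\zeta^{cb}+2\delta_i^{(c}\lambda^{b)})$. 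This suggests the definitions $\omega_i:=\nabla_i\rho-2P_{ia}\lambda^a$ and $\psi^c_i:=\nabla_i\lambda^c+\delta_i^c\rho-P_{ib}\zeta^{cb}$, living respectively in $\Gamma(\mathcal E_i(-2))$ and $\Gamma(\mathcal E^c_i(-2))$.

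To finish, I would substitute $\lambda^a=-\tfrac{1}{n+1}\nabla_b\zeta^{ab}$ from Corollary~\ref{3.3} into the $\eta\eta$ coefficient; this converts $\nabla_i\zeta^{cb}+2\delta_i^{(c}\lambda^{b)}$ into precisely the trace-free expression $\chi^{cb}_i$ stated in the proposition, and rearranging delivers the claimed formula. The computation is essentially linear-algebraic and the only real obstacle is bookkeeping: keeping the tractor injector pairings straight via the duality table, verifying that the symmetrisations commute with the $\eta_c\eta_b$ contraction, and confirming that each of the three resulting terms has total projective weight $+2$ so that both sides are sections of $\mathcal E_i(2)$, as required.
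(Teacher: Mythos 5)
Your proposal is correct and follows essentially the same route as the paper: differentiate $\Phi_{AB}H^{BC}=\delta_A^C$ with the tractor connection to get $\nabla_i^{\mathcal T}\Phi=-\Phi(\nabla_i^{\mathcal T}H)\Phi$, insert the explicit slot formulas for $\nabla_i^{\mathcal T}H$ and $\nabla_i^{\mathcal T^*}\Phi$, and identify $\chi,\psi,\omega$ with the three slots, with $\lambda^a=-\tfrac{1}{n+1}\nabla_b\zeta^{ab}$ giving the trace-free form of $\chi$. The only cosmetic difference is that you isolate just the top slot by contracting with $X^AX^E$, whereas the paper writes out all three slot equations and reads off the last one.
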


\begin{proof}
By definition we have 
\begin{align*}
\Phi_{AC}H^{CB} = \delta^{B}_{A}.
\end{align*}
Applying $\nabla^{\mathcal{T}}_{i}$ to both sides gives
\begin{align*}
(\nabla^{\mathcal{T}}_{i} \Phi_{AC})H^{CB}= - \Phi_{AC} \nabla ^{\mathcal{T}}_{i} H^{CB}.
\end{align*}
Applying $\Phi_{BD}$ to each side gives
\begin{align*}
\nabla^{\mathcal{T}}_{i} \Phi_{AD} = -\Phi_{AC} (\nabla^{\mathcal{T}}_{i}H^{CB}) \Phi_{BD} .
\end{align*}
By the formula for the tractor connection 
\begin{align*}
\nabla^{\mathcal{T}}_{i} \Phi_{AD}
= \left(
\begin{array}{cc}
\nabla_{i} \tau - 2 \eta _{i} \\
\nabla _{i} \eta _{a} + P_{ia} \tau - \phi_{ia}\\
\nabla_{i} \phi_{ad} + 2P_{i(d} \eta_{a)} \\
\end{array} 
\right)
\end{align*}
and 
\begin{align*}
 \nabla^{\mathcal{T}}_{i}H^{CB} = 
\left(
\begin{array}{cc}
\nabla_{i} \zeta^{cb} + 2\delta_{i}^{(c} \lambda ^{b)} \\
\nabla _{i} \lambda ^{c} + \delta ^{c}_{i}\rho - P_{ib}\zeta^{cb} \\
\nabla_{i} \rho - 2P_{ic} \lambda ^{c} \\
\end{array} 
\right)
\equalscolon \left(
\begin{array}{cc}
\chi_{i}^{cb} \\
\psi_{i}^{c}\\
\omega_{i} \\
\end{array} 
\right).
\end{align*}
Then we compute
\begin{align*}
\nabla^{\mathcal{T}}_{i} \Phi_{AD} = -\Phi_{AC} (\nabla^{\mathcal{T}}_{i}H^{CB}) \Phi_{BD} 
\end{align*}
to get the following system of equations
\begin{align*}
\nabla_{i} \phi_{ad} & = - 2P_{i(d} \eta_{a)} - \phi_{ac}\chi^{cb}_{i}\phi_{bd} - \phi_{ac}\psi^{c}_{i}\eta_{d} - \eta_{a}\psi_{i}^{b}\phi_{bd} - \eta_{a}\omega_{i}\eta_{d}  \\
\nabla _{i} \eta _{a} & = \phi_{ia} - P_{ia} \tau  -\phi_{ac}\chi^{cb}_{i}\eta_{b} -  \phi_{ac}\psi^{c}_{i}\tau - \eta_{a}\psi^{b}_{i}\eta_{b} - \eta_{a}\omega_{i}\tau \\
\nabla_{i} \tau & = 2 \eta_{i} - 2\tau \eta_{c} \psi^{c}_{i} -\tau^{2}\omega_{i} - \chi^{cb}_{i}\eta_{c}\eta_{b}.
\end{align*}
The last equation is precisely what we set out to show.
\end{proof}

Note that $\chi^{cb}_{i}=0$ is just the metrizability equation (\ref{meteq}). Thus, for a solution $\zeta^{ab}$ of that equation, along $\mathcal{Z}(\tau)$ we have
\begin{align} \label{dkey}
\nabla_{i} \tau & = 2 \eta_{i}.
\end{align}

\begin{lemma}\label{4.4}
Let $(M, \bp)$ be an $n-$dimensional projective manifold equipped with $H^{AB}\in \Gamma (\mathcal{E}^{(AB)})$, a nondegenerate symmetric bilinear form on the cotractor bundle, with pointwise inverse $\Phi_{AB}$. Then $\mathcal{D}(\zeta^{ab}) = \mathcal{Z}(\tau)$ where $\tau = X^AX^B\Phi_{AB}$ and $\zeta^{ab} = Z_A^aZ_B^bH^{AB}$.
\end{lemma}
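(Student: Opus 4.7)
The plan is to verify the equality fiberwise. At each $x \in M$ the question reduces to a linear-algebra statement about the symmetric $(n+1) \times (n+1)$ bilinear form $H^{AB}$ and its inverse $\Phi_{AB}$, once an arbitrary splitting $\nabla \in \bp$ is fixed: in the resulting decomposition $\zeta^{ab} = Z^a_A Z^b_B H^{AB}$ is the ``top'' $n \times n$ block of $H^{AB}$ complementary to the $X$-direction, while $\tau = X^A X^B \Phi_{AB}$ is the corresponding corner entry of $\Phi$. By Cramer's rule $\tau = \bdet(\zeta^{ab})/\det(H^{AB})$ up to a nonzero normalization coming from the tractor volume form, and since $H^{AB}$ is nondegenerate we have $\det(H^{AB}) \neq 0$ everywhere, hence $\tau(x) = 0$ iff $\bdet(\zeta^{ab})(x) = 0$ iff $\zeta^{ab}(x)$ has rank less than $n$.

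To convert this into a proof that stays inside the tractor calculus I would instead establish the two inclusions directly from the defining identity $\Phi_{AC} H^{CB} = \delta^B_A$ combined with well-chosen contractions against $X^A$. For $\mathcal{D}(\zeta) \subseteq \mathcal{Z}(\tau)$: given $u_a \neq 0$ with $u_a \zeta^{ab} = 0$ at $x$, the cotractor $U_A := Z^a_A u_a$ is nonzero, and expanding $H^{AB} U_A$ in the splitting using $U_A W^A_c = u_c$, $U_A X^A = 0$, and the degeneracy hypothesis collapses it to $\alpha X^B$ with $\alpha := u_c \lambda^c$. Invertibility of $H^{AB}$ then forces $\alpha \neq 0$ (else $U_A$ would lie in the kernel of $H$). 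Applying $\Phi_{BE}$ to both sides gives $U_E = \alpha \Phi_{BE} X^B$, and contracting with $X^E$ yields $0 = X^E U_E = \alpha \tau(x)$, so $\tau(x) = 0$.

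For the reverse inclusion, consider $V_A := \Phi_{AB} X^B$. Invertibility of $\Phi$ together with $X^B \neq 0$ forces $V_A$ to be nowhere zero, and in the splitting $V_A = Y_A \tau + Z^a_A \eta_a$. At any $x$ where $\tau(x) = 0$, this reduces to $V_A|_x = Z^a_A \eta_a$ with $\eta_a \neq 0$; contracting the identity $H^{CA} V_A = X^C$ with $Z^c_C$ then produces $\zeta^{ac} \eta_a = 0$, exhibiting $\zeta(x)$ as degenerate.

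The only real obstacle is notational: one must systematically apply the pairings $Y_A X^A = 1$, $Z^a_A W^A_b = \delta^a_b$, and the vanishing of the cross pairings in order to reduce everything to the fiberwise linear algebra above. The metrizability equation plays no role here; fiberwise nondegeneracy of $H^{AB}$ is the sole structural input, which is why this lemma will be available in the subsequent sections whenever $L(\zeta)$ is nondegenerate, independently of whether $\zeta$ is normal.
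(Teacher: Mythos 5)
Your argument is correct, but it takes a genuinely different route from the paper. The paper proves the lemma by a determinant identity: using the squared tractor volume form and the adjugate of $H$ it computes $\bdet(\zeta)=\thorn\,\det(H)\,\tau$ for a nonzero constant $\thorn$ (equation \eqref{star}), and then nondegeneracy of $H$ gives $\mathcal{Z}(\tau)=\mathcal{D}(\zeta)$ at once. Your first paragraph is this same idea in Cramer's-rule form, but the proof you actually carry out is the more elementary two-inclusion kernel argument: if $u_a\zeta^{ab}=0$ with $u\neq 0$ then $H^{AB}Z^a_Au_a=\alpha X^B$ with $\alpha\neq 0$ by invertibility, and contracting $U_E=\alpha\Phi_{BE}X^B$ with $X^E$ forces $\tau=0$; conversely where $\tau=0$ the nonzero cotractor $V_A=\Phi_{AB}X^B$ has only a $Z^a_A\eta_a$ part and $Z^c_CH^{CA}V_A=0$ exhibits $\eta_a$ as a kernel element of $\zeta$. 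Both directions check out (the weights and the pairings $Y_AX^A=1$, $Z^b_AW^A_a=\delta^b_a$, $Z^b_AX^A=0$ work exactly as you use them), and your approach has the virtue of using only the fiberwise identity $\Phi_{AC}H^{CB}=\delta^B_A$, with no adjugate or $\epsilon^2$ gymnastics. What it does not deliver, however, is the quantitative relation $\bdet(\zeta)=\thorn\det(H)\tau$ itself, which the paper's proof establishes in passing and then reuses: it is invoked in Theorem \ref{4.6}(ii) (to see that $\hat\tau=\bdet(\zeta)$ is a defining density, via \eqref{dkey} and \eqref{star}), in the proof of Proposition \ref{4.10}, and in the adapted form in Theorem \ref{4.14}. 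So if one adopted your proof verbatim, that identity would still need to be proved separately for the later arguments; as a proof of the stated set equality alone, yours is complete and arguably cleaner.
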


\begin{proof}
Let $\operatorname{Adj}(H)$ denote the tractor field that is given by the adjugate of $H$ in a local frame. In such a frame this is just the cofactor transpose. This has the property that $\operatorname{Adj}(H)_{AB}H^{AB}=\operatorname{det} (H)$, we compute
\begin{align*}
\text{\normalfont \bdet}(\zeta^{a_nb_n}) & =  \epsilon^2_{a_0\cdots a_{n-1}b_0\cdots b_{n-1}}Z^{a_0}_{A_0} \cdots Z^{a_{n-1}}_{A_{n-1}}Z^{b_0}_{B_0} \cdots Z^{b_{n-1}}_{B_{n-1}}H^{A_0B_0}\cdots H^{A_{n-1}B_{n-1}}\\
& =  \thorn X^{A_n}X^{B_n}\mathcal\epsilon^2_{A_0\cdots A_nB_0\cdots B_n}H^{A_0B_0}\cdots H^{A_{n-1}B_{n-1}}\\
& =  \thorn X^{A_n}X^{B_n} \text{\normalfont Adj}(H)_{A_nB_n}\\
& = \thorn \operatorname{det}(H)\tau,
\end{align*}
and so 
\begin{equation}\label{star}
\text{\normalfont \bdet}(\zeta^{a_nb_n}) = \thorn \operatorname{det}(H)\tau.
  \end{equation}
for some non-zero constant $\thorn$. The nondegeneracy of $H^{AB}$ allows us to conclude that $\mathcal{Z}(\tau) = \mathcal{D}(\zeta^{ab})$. 
\end{proof}

\noindent Next we show that the equation  
\begin{align}\label{EMconn}
\nabla_{c}H^{AB} + \frac{2}{n}X^{(A}W\indices{_{cE}^{B)}_F}H^{EF}=0,
\end{align}
is equivalent to the prolonged system in \cite{CGM} corresponding to the metrizability equation, where $W\indices{_{cE}^{B}_F} \colonequals Z_{E}^{e}\Omega\indices{_{ce}^{B}_F}$, and $\Omega$ denotes the tractor curvature. 

Notably, a solution $H^{AB}$ of \eqref{EMconn} is equivalent to a solution $\zeta^{ab}$ to the metrizability equation. This is straightforward to check, we just compute the slots and see that they agree with \cite{CGM, EM}\footnote{Eastwood and Matveev \cite{EM} use a slightly different convention for the tractor connection and projective Cotton tensor.}. Let us begin by writing
\begin{align*}
H^{DE}=X^DX^E\rho + 2 X^{(D}W^{E)}_e\mu^e+W^D_dW^E_e\ze^{de}.
\end{align*}
Then 
\begin{align*}
H^{DE}\Omega\indices{_{ab}^C_E} = & X^DX^C(-2Y_{abc}\mu^c) + X^{D}W^{C}_g(W\indices{_{ab}^g_d}\mu^d - Y_{abd}\ze^{cd}) \\
& +X^{C}W^{D}_d(W\indices{_{ab}^d_c}\mu^c - Y_{abc}\ze^{cd})+W^D_dW^C_g(W\indices{_{ab}^g_e}\ze^{de}+W\indices{_{ab}^d_e}\ze^{ge}).
\end{align*}
So,
\begin{align*}
H^{DE}X^{(F}\Omega\indices{_{ab}^{C)}_E}= & X^DX^{(F}X^{C)}(-2Y_{abc}\mu^c) + X^{D}X^{(F}W^{C)}_g(W\indices{_{ab}^g_d}\mu^d - Y_{abd}\ze^{cd}) \\
& + X^{(F}X^{C)}W^{D}_d(W\indices{_{ab}^d_c}\mu^c - Y_{abc}\ze^{cd})+W^D_dX^{(F}W^{C)}_g(W\indices{_{ab}^g_e}\ze^{de}+W\indices{_{ab}^d_e}\ze^{ge}).
\end{align*}
Which gives us that
\begin{align*}
Z^b_DH^{DE}X^{(F}\Omega\indices{_{ab}^{C)}_E}= & X^{(F}X^{C)}\delta^b_d(W\indices{_{ab}^d_c}\mu^c - Y_{abc}\ze^{cd})+\delta^b_dX^{(F}W^{C)}_g(W\indices{_{ab}^g_e}\ze^{de}+W\indices{_{ab}^d_e}\ze^{ge})\\
& + X^{(F}X^{C)}(-2Y_{abc}\ze^{cb})+X^{(F}W^{C)}_g(W\indices{_{ad}^g_e}\ze^{de}).\\
\end{align*}
Thus we see that the slots indeed agree with \cite{CGM}, i.e. 

\begin{align*}
\nabla_c 
\left(
\begin{array}{cc}
\zeta^{ab} \\
\mu^{a}  \\
\rho  \\
\end{array} 
\right)
= \frac{1}{n}
\left(
\begin{array}{cc}
0 \\
-W\indices{_{cd}^a_e}\ze^{de}  \\
2Y_{cba}\ze^{ba}  \\
\end{array} 
\right)
\end{align*}

\noindent We are now prepared to prove our first main result, Theorem \ref{one}, wherein we generalize a result in \cite{CGH2} showing that an everywhere nondegenerate parallel symmetric bilinear form on the standard tractor bundle induces a decomposition of the underlying manifold. Note that in the following Lemma we do not assume that $\zeta$ is a solution to the metrizability equation. Recall the notation of Proposition \ref{4.3}.

\begin{lemma} \label{4.5}
Let $(M,\bp)$ be a projective manifold equipped section $\zeta^{ab}\in\Gamma(\mathcal{E}^{(ab)})$ such that $H^{AB}=L(\zeta^{ab})$ is everywhere nondegenerate as a symmetric bilinear form on the cotractor bundle. Denote its pointwise inverse by $\Phi_{AB}$, as above. If $\mathcal{Z} (\tau) \cap \mathcal{Z} (\nabla_{i} \tau) \subseteq \mathcal{Z}(\chi_{i}^{ab}\eta _{a}\eta_{b})$, where $\chi$ is given in Proposition \ref{4.3}, then either
$\ze$ is everywhere non-degenerate or its degeneracy locus is a
smoothly embedded separating hypersurface
$M_0$.  If $L(\ze)$ is definite then the degeneracy locus
is empty, otherwise if $L(\ze)$ has signature $(p+1,q+1)$ then: \\
(i) $M$ is stratified by the strict signature of $\ze$ as a (density
weighted) bilinear form on $T^*M$ and the partitioning is  
$$
M=  \coprod\limits_{i \in \{+,0,-\}} M_{i}
$$ 
where $\zeta$ has signature $(p+1,q)$,
$(p,q+1)$,and $(p,q,1)$ on $M_{+}$, $M_{-}$, and $M_0$, respectively. \\
(ii) If $M$ is closed, then $(M\backslash M_{\mp},\bp)$ is an order 2 projective compactification of $(M_{\pm},\nabla^{\tau})$, with boundary $M_0$.\\
\end{lemma}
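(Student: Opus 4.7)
The plan is to mirror the structure of Lemma \ref{4.1}, using Proposition \ref{4.3} and Lemma \ref{4.4} to transfer information between $L(\zeta) = H^{AB}$ and the density $\tau = X^A X^B \Phi_{AB}$ built from its pointwise inverse $\Phi_{AB}$. Since Lemma \ref{4.4} already identifies $\mathcal{D}(\zeta)$ with $\mathcal{Z}(\tau)$, the argument reduces to understanding the zero set of $\tau$ and the behaviour of $\zeta$ near it. For the definite case I would observe that definiteness of $H^{AB}$ forces $\Phi_{AB}$ to be definite, and since $X^A$ is a nowhere-vanishing canonical tractor, $\tau$ has constant nonzero sign; Lemma \ref{4.4} then yields $\mathcal{D}(\zeta) = \varnothing$.

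For the indefinite case the key step is showing $\mathcal{Z}(\tau) \cap \mathcal{Z}(\nabla_i \tau) = \varnothing$, after which the implicit function theorem gives a smoothly embedded separating hypersurface $M_0 = \mathcal{Z}(\tau) = \mathcal{D}(\zeta)$. I would argue by contradiction: at a point $x$ in the intersection, the hypothesis gives $\chi_i^{ab}\eta_a\eta_b(x) = 0$, while $\tau(x) = 0$ and $\nabla_i \tau(x) = 0$ annihilate the two $\tau$-dependent terms on the right of the identity in Proposition \ref{4.3}, leaving $\eta_i(x) = 0$. At such a point the contraction $\Phi_{AB} X^A = \tau Y_B + \eta_a Z^a_B$ vanishes, so $X^A$ lies in the kernel of $\Phi_{AB}$, contradicting the nondegeneracy of $\Phi$. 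This is the main new input compared with Lemma \ref{4.1}.

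Once $M_0$ is known to be a smooth hypersurface, the stratification $M = M_+ \sqcup M_0 \sqcup M_-$ by the strict sign of $\tau$ is immediate. For the signature determination on each open stratum I would perform a Schur complement calculation for $H^{AB}$ expressed in the basis $(Y_A, Z^b_A)$ of the cotractor bundle as
\[
\begin{pmatrix} \rho & \lambda^b \\ \lambda^a & \zeta^{ab} \end{pmatrix},
\]
which on $M_\pm$ (where $\zeta$ is nondegenerate by Lemma \ref{4.4}) block-diagonalises via $\widetilde{Y}_A = Y_A - [\zeta^{-1}]_{ac}\lambda^c Z^a_A$ to give signature of $H$ equal to signature of $\zeta$ plus the sign of $\rho - \lambda^a [\zeta^{-1}]_{ab} \lambda^b = 1/\tau$. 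Combined with the identity $\bdet(\zeta) = \thorn \det(H) \tau$ from Lemma \ref{4.4}, this determines signatures $(p+1, q)$ and $(p, q+1)$ on the two open strata, with the sign of $\thorn$ and the cone position of $X^A$ relative to $\Phi$ fixing the assignment to $M_+$ and $M_-$. On $M_0$ the determinant identity forces $\bdet(\zeta) = 0$; nondegeneracy of $H$ combined with continuity from the neighbouring open strata then shows $\zeta$ drops rank by exactly one, giving signature $(p, q, 1)$. For part (ii), $\tau$ is by construction a defining density of weight $2$ for $M_0$, so Proposition \ref{2.2} yields directly that the scale connection $\nabla^\tau$ on each component of $M \setminus M_0$ is projectively compact of order $2$ with boundary $M_0$.

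The hard part will be the signature computation on the open strata: the Schur complement calculation only pins down signature up to an ambiguity between $(p+1, q)$ and $(p, q+1)$, and I expect most of the technical work to lie in fixing sign conventions (the value of $\thorn$ and the orientation induced by $X^A$ in the positive cone of $\Phi$) to match the assignment in the statement. The hypersurface/contradiction step and the definite case are routine consequences of Proposition \ref{4.3} and Lemma \ref{4.4}.
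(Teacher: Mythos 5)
Your proposal is correct and follows essentially the same route as the paper: use the identity of Proposition \ref{4.3} together with the hypothesis to show $\mathcal{Z}(\tau)\cap\mathcal{Z}(\nabla_i\tau)=\varnothing$ (since $\tau=0$ and $\eta_i=0$ force $\Phi_{AB}X^B=0$, contradicting nondegeneracy), invoke the implicit function theorem and Lemma \ref{4.4} to identify $\mathcal{D}(\zeta)=\mathcal{Z}(\tau)$ as a smooth separating hypersurface, and obtain order-$2$ projective compactness from $\tau$ being a weight-$2$ defining density via Proposition \ref{2.2}. Your Schur-complement analysis of the signatures supplies detail the paper leaves terse (it defers the rank discussion to Theorem \ref{4.6}(iii)), and the $(p+1,q)$ versus $(p,q+1)$ ``ambiguity'' you worry about is only a labelling convention for $M_\pm$, so no further work is needed there.
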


\begin{proof}
Let $x \in \mathcal{Z} (\tau) \cap \mathcal{Z} (\nabla_{i} \tau)$, if non-empty. It follows that $x \in \mathcal{Z} (\chi_{i}^{ab}\eta _{a}\eta_{b})$. Thus, at $x$, 
\begin{align*}
\nabla_{i} \tau & = 2 \eta_{i} - 2\tau \eta_{c} \psi^{c}_{i} -\tau^{2}\omega_{i} - \chi^{cb}_{i}\eta_{c}\eta_{b}
\end{align*}
from Proposition \ref{4.3} reduces to 
\begin{align*}
\eta_{i} = 0.
\end{align*}
This implies that $\Phi$ is degenerate, a contradiction. Hence $\mathcal{Z} (\tau) \cap \mathcal{Z} (\nabla_{i} \tau) =\varnothing$. Thus, by the implicit function theorem, $\mathcal{Z} (\tau)$ is a smoothly embedded hypersurface. By the previous Lemma $\mathcal{Z} (\tau) = \mathcal {D} (\zeta^{ab})$, whereby we conclude that $\mathcal{D} (\zeta^{ab})$ is a smoothly embedded hypersurface with defining density $\tau$ of weight 2, from which the order 2 projective compactness of the open components $(M_\pm,\nabla^{\tau}$) follows as in Lemma \ref{4.1}. Recall from Lemma \ref{4.4} that $\tau$ is equal to the determinant of $\zeta$ up to multiplication by a smooth nonvanishing function. Thus the manifold decomposes into the claimed disjoint union according to the strict sign of $\tau$ which itself varies according to the strict signature of $\zeta$. 
%Projective compactness of order 2 of the open components $(M_{\pm}\nabla^{\tau})$ follows from the fact that the defining density, $\tau$, for the boundary $M_0$ is a 2-density. 
\end{proof}
The components $M_+$, $M_0$, and $ M_{-}$ in the preceding theorem are not
necessarily each connected. 

\begin{theorem} \label{4.6}
Under the conditions of the previous Lemma, with also  $\zeta^{ab}$ assumed to be a solution to the metrizability equation (so $\chi _{i}^{ab} = 0$),
 the following hold: 

 (i) $\mathcal{D}(\zeta^{ab})$, if nonempty, is a smoothly embedded hypersurface of $M$, and $M$ decomposes according to the previous Lemma \ref{4.5}. 
 
(ii) On $M_\pm$, $\zeta$ induces a pseudo-Riemannian metric $g_{\pm}$
 of the same signature as $\zeta$, where $g_{\pm}^{ab} =
 \operatorname{sgn}(\hat{\tau}) \hat{\tau} \zeta^{ab}|_{M_{\pm}}$,
 where $\hat{\tau}\colonequals \bdet(\ze)$. This satisfies $\nabla^{g_{\pm}}\in \bp$ and
 $ \hat{\tau}$ is a projective weight 2 defining density for $\mathcal{D}(\zeta^{ab})$.
 If $M$ is closed, then
 $(M\backslash M_{\mp},\bp)$ is an order 2 projective compactification
 of $(M_{\pm},\nabla^{g_\pm})$, with boundary $M_0$, where $\nabla^{g_\pm}$ is
 the Levi-Civita connection corresponding to $g_\pm$.

 (iii) $M_0$
 inherits a conformal structure with signature $(p,q)$. 
\end{theorem}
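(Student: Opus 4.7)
The plan is to derive the three conclusions sequentially, leveraging Proposition \ref{4.3}, Lemma \ref{4.4}, and Lemma \ref{4.5} together with the Mikes--Sinjukov correspondence recalled around equation \eqref{meteq}.

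For (i), I observe that the extra hypothesis now in force is precisely $\chi^{ab}_i=0$ (the metrizability equation), which makes the set-inclusion hypothesis of Lemma \ref{4.5} vacuous. Hence Lemma \ref{4.5} applies and delivers at once the smoothly embedded separating hypersurface structure of $\mathcal{D}(\zeta)$, together with the stratification of $M$ by the strict signature of $\zeta$.

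For (ii), on each open stratum $M_\pm$ the tensor $\zeta^{ab}$ has maximal rank $n$, so the Mikes--Sinjukov correspondence produces a pseudo-Riemannian metric $g_\pm$ with inverse $g_\pm^{-1}=\operatorname{sgn}(\hat\tau)\hat\tau\,\zeta|_{M_\pm}$ whose Levi-Civita connection lies in $\bp$ and has the same signature as $\zeta|_{M_\pm}$. To promote $\hat\tau=\mathbf{det}(\zeta)$ to a weight-2 defining density for $M_0$, I appeal to the identity $\hat\tau=\thorn\,\det(H)\,\tau$ established in the proof of Lemma \ref{4.4}: since $H^{AB}$ is everywhere nondegenerate, $\det(H)$ is nowhere zero, so $\hat\tau$ has the same zero locus and the same first-order vanishing behavior as $\tau$. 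The delicate point, which I expect to be the main obstacle, is the verification that $\nabla^{g_\pm}$ coincides with the scale connection $\nabla^{\hat\tau}$ determined by the defining density; once this is checked, Proposition \ref{2.2} applies directly to give the asserted order-2 projective compactification. This equality can be obtained by combining the metrizability equation, written in the scale $\nabla^{g_\pm}$, with the fact that $\nabla^{g_\pm}$ preserves $g_\pm=\operatorname{sgn}(\hat\tau)\hat\tau\,\zeta^{-1}$.

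For (iii), the decisive step is to pin down the one-dimensional kernel of $\zeta^{ab}|_{M_0}$. At $x\in M_0$ we have $\tau(x)=0$ and, by the reduction of Proposition \ref{4.3} under the metrizability equation recorded in \eqref{dkey}, $\eta_a(x)=\tfrac12\nabla_a\tau(x)\neq 0$. Writing the defining relation $\Phi_{AC}H^{CB}=\delta_A^{\,B}$ in block form at $x$ in a splitting, the block that at $x$ reduces to $\zeta^{bc}\eta_b$ must vanish, so $\zeta^{bc}\eta_b=0$. Hence the kernel of $\zeta|_{M_0}$ is spanned by the weighted conormal $\eta_a\propto d\tau$, and $\zeta$ descends to a nondegenerate weighted bilinear form of signature $(p,q)$ on $T^*M|_{M_0}/\langle d\tau\rangle\cong T^*M_0$. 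The residual scaling freedom in trivializing the ambient density $\mathcal{E}(-2)|_{M_0}$ returns precisely a conformal class on $M_0$, by the same argument already invoked for Lemma \ref{4.1}(ii) and in Theorem 3.2 of \cite{CGH1}.
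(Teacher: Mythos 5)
Your proposal is correct, and for parts (i) and (ii) it is essentially the paper's own argument: (i) is immediate from Lemma \ref{4.5} once $\chi_i{}^{ab}=0$, and in (ii) the paper likewise promotes $\hat\tau=\bdet(\ze)$ to a weight-2 defining density by combining \eqref{star} (with $\det(H)$ nowhere zero) and \eqref{dkey}, then notes that $\nabla^{g_\pm}$ preserves $\hat\tau$, so that it is the scale connection to which Proposition \ref{2.2} applies; your ``delicate point'' is exactly this observation, and your suggested verification works (in fact one needs only $\nabla^{g_\pm}g_\pm=0$ together with the algebraic identity $\bdet(g_\pm^{-1})=\pm\hat\tau^{\,n+1}$, the metrizability equation having already been spent on $\nabla^{g_\pm}\in\bp$). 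Where you genuinely diverge is (iii). The paper first proves the pointwise bound $\mathcal{R}(\ze)\geq n-1$ by a minor argument applied to $L(\ze)$, and then identifies the kernel by writing $\operatorname{adj}(\ze)_{ab}=\alpha_a\alpha_b$ on $M_0$ and computing $\ze^{cd}\nabla_c\hat\tau=0$ directly from the metrizability equation. You instead read off the slot identity $\tau\lambda^b+\eta_d\ze^{db}=0$ from $\Phi_{AC}H^{CB}=\delta_A{}^{B}$, evaluate it at $\tau=0$, and use \eqref{dkey} to recognize $\eta_a=\tfrac12\nabla_a\tau$ as the (weighted) conormal; this is shorter and bypasses the adjugate computation entirely. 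The only caveat is that you take the one-dimensionality of $\ker\ze$ along $M_0$ from the signature statement of Lemma \ref{4.5} (legitimate, since that lemma is hypothesized), whereas the paper re-derives it inside the proof of (iii); if you want your route to be self-contained on this point, note that the further slot identity $\eta_a\lambda^b+\phi_{ad}\ze^{db}=\delta_a^b$ from the same block decomposition gives it at once, since contracting with any $v_b\in\ker\ze$ at a point of $M_0$ yields $v_a=(\lambda^bv_b)\,\eta_a$, so the kernel is exactly the span of $\eta_a$ and $\ze$ descends with signature $(p,q)$ as claimed.
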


 \begin{proof}
 $(i)$ $\ze^{ab}$ is a solution to the metrizability equation so $\chi_i^{ab}=0$, whence the result follows from Lemma \ref{4.5}. 
 
   \noindent $(ii)$ The first statement expresses the standard
   relation between a solution of \eqref{meteq} and a metric $g_\pm$ such
   that $\nabla^{g_\pm}\in \bp$. Via the formula $g_{\pm}^{ab} =
 \operatorname{sgn}(\hat{\tau}) \hat{\tau} \zeta^{ab}|_{M_{\pm}}$ it follows that $\nabla^{g_\pm}$ also preserves $\zeta$ and $\hat{\tau}$, and that either of the latter characterize $\nabla^{g_\pm}\in \bp$. 
  Next that the weight 2 $ \hat{\tau}$ is
 a defining density for $\mathcal{D}(\zeta^{ab})$ is an immediate
 consequence of \eqref{dkey} and \eqref{star}. Since
 $\nabla^{g_\pm}\hat{\tau}=0$ and $ \hat{\tau}$ is a defining density
 for the smooth hypersurface $\mathcal{D}(\zeta^{ab}=:M_0$ it follows
 that $(M\backslash M_{\mp},\bp)$ is an order 2 projective
 compactification as claimed.

\noindent $(iii)$ $M_0$ inheriting a conformal structure follows by showing that $\ze$ has rank $k \geq n-1$ and then that the normal, $\nabla\tau$, to the hypersurface, $M_0$, is in the kernel of $\zeta$ so $\ze$ is nondegenerate on $T^*M_0$ and hence gives a conformal metric. 
 
 Given an $(n+1) \times (n+1)$ matrix of rank $m$, removing a column (resp. row) reduces the rank by at most 1. Then removing a row (resp. column) again decreases the rank by at most one. Hence and $n\times n$ submatrix has rank $k \geq m-2$. Picking a local frame so that $\ze$ and $L(\ze)$ can be presented as a matrices we see that we are in the case where $k \geq n-1$. Hence $\mathcal{R}({\ze}) \geq n-1$, so the kernel of $\ze$ has dimension less than or equal to $1$. In particular, when $\ze$ becomes degenerate its kernel has rank 1. Now we show that when $\ze$ degenerates that its kernel is spanned by $\nabla \hat\tau$. 
 
The adjugate of $\ze$ is symmetric and on $\mathcal{D}(\ze)$ it has
rank $1$, hence locally it can be written
$\operatorname{adj}(\ze)_{ab}=\alpha_a\alpha_b$ for some $1-$form
$\alpha$. Observe that, up to a nonzero constant,
$\bdet(\ze)=\operatorname{adj}(\ze)_{ab}\ze^{ab}$. So
$\alpha_a\ze^{ab}=0$ on $\mathcal{D}(\ze)$. Then on $\mathcal{D}(\ze)$
we have the following:
 \begin{align*}
\ze^{cd} \nabla_c\hat \tau & = \ze^{cd}\nabla_c(\epsilon^2_{a_1 \cdots a_nb_1 \cdots b_n}\ze^{a_1b_1}\cdots \ze^{a_nb_n}) \\
 & = n\ze^{cd}\epsilon^2_{a_1 \cdots a_nb_1 \cdots b_n}\ze^{a_1b_1}\cdots \ze^{a_{n-1}b_{n-1}}(\nabla_c \ze^{a_nb_n}) \\
 & = n\ze^{cd}\operatorname{adj}(\ze)_{a_nb_n}(\frac{1}{n+1}\delta^{a_n}u_c\nabla_e\ze^{eb_n}+\frac{1}{n+1}\delta^{b_n}_c\nabla_e\ze^{ea_n}) \\
 & = \frac{n}{n+1}( \ze^{a_nd} \alpha_{a_n}\alpha_{b_n} \nabla_e\ze^{eb_n}+\ze^{b_nd}\alpha_{a_n}\alpha_{b_n}\nabla_e\ze^{ea_n}) \\
 & = 0.
 \end{align*}
\end{proof}

Let $S\colonequals \operatorname{det}(L(\zeta^{ab}))$. In \cite{CG3} Proposition
3 it was shown that this defines a smooth function on $M$ that
generalizes the notion of scalar curvature. On each of $M_{\pm}$ it agrees (up to a constant factor) with the usual scalar curvature of
the metric $g_{\pm}$
(determined by $\zeta$ on $M_{\pm}$). But $S$ is well defined where the metric is singular (i.e. on $M_0$).  Note  that while
the scalar curvature can change sign on M, the {\em generalized scalar
curvature} $S$ obviously does not, given our assumptions.

\begin{remark} \label{4.7}
Let $(M,\bp)$ be a projective manifold satisfying the conditions of
Theorem \ref{4.6}, and assume that $\ze$ is a solution of the
metrizability equation. Assume $M_0 \neq \varnothing$, so
$\hat\tau\colonequals \bdet(\ze)$
is a defining density for $M_0$ regarded
as the boundary of the projective compactification of $(M_{\pm},
\nabla^{\tau})$. In \cite{CG3} the generalized scalar curvature, $S$,
was shown to be locally constant and nowhere vanishing on $M_0$. The
generalized scalar curvature being locally constant on $M_0$ is
equivalent to the contravariant index of a $\ze$ trace of the
projective Weyl tensor, $\xi^aW\indices{_{ab}^{c}_{d}}\zeta^{bd}$,
being tangential along $M_0=\mathcal{D}(\zeta^{bd})$, for all $\xi^a\in
TM_0$. One can see this as follows:

%The nondegeneracy of $H^{AB}$ implies $S$ is everywhere nonvanishing, in particular along $M_0$. 

Since $S$ is locally constant on $M_0$ we know that along vector fields $\xi^c \in \Gamma (TM_0)$, the determinant of $H^{AB}$ has derivative zero. 
Note that, as seen above, $\tau \colonequals \Phi_{AB}X^AX^B$ is also a defining density for $M_0$. So, $\nabla \tau $ is nowhere zero along $M_0$, for  any scale $\nabla$  that is define in a neighborhood of  the boundary. So $\nabla\tau$ is a (weighted) conormal to $M_0$ and 
\[
D_A(\tau) =
\left(
\begin{array}{cc}
\tau \\
\nabla\tau
\end{array}
\right) =
\left(
\begin{array}{cc}
0 \\
\nabla \tau
\end{array}
\right)
\]
on $M_0$. Since the Thomas-D operator satisfies the Leibniz rule we see 
\begin{align*}
D_A \tau = (D_A\Phi_{BC})X^BX^C + 2\Phi_{AB}X^B.
\end{align*}
A straightforward computation shows that $(D_A \Phi_{BC})X^BX^C=0$, so that we have $D_A\tau = 2 \Phi_{AB}X^B$. Another straightforward computation gives that $\nabla_c^{\mathcal{T}} \operatorname{det}(H^{AB}) = (\nabla_a\tau) W\indices{_{cd}^a_e}\zeta^{de} + 2\tau Y_{cde}\zeta^{de}$. Of course, along the boundary this reduces to 
\[
\xi^c\nabla_c^{\mathcal{T}} \operatorname{det}(H^{AB}) = \xi^c(\nabla_a\tau) W\indices{_{cd}^a_e}\zeta^{de},
\]
where $\xi^c\in \Gamma(TM_0) \subset \Gamma(TM)$. If the contravariant index in the projective Weyl tensor is tangential then this is certainly zero. On the other hand, if it is equal to zero, then either the Weyl tensor vanishes in which case the contravariant index it tangential, or it does not vanish in which case it is forced to be tangential.

We will derive an analogous result, regarding the tangential index of the contracted Weyl, in the following subsection, where it will play a fundamental role in showing that the degeneracy locus is totally geodesic. 
\end{remark}

\subsection{Degenerate solutions of the metrizability equation: the order 1 projective compactification case}\label{dsec}

We begin with another simple case where we stipulate a fixed rank
condition on the splitting operator applied to a 2-density. Then we
will move to a ``dual case'' concerning solutions to the metrizability
equation which, under new assumptions, are related to scalar-flat
metrics.

\begin{proposition}\label{4.8}
Let $(M^n,\bp)$ be a connected, projective 
manifold equipped with a section $\tau \in \Gamma(\mathcal{E}(2))$ such that $\mathcal{R}(L(\tau))=1$. Then either $\mathcal{Z}(\tau)$ is empty or $M$
is locally stratified by the strict sign of a canonical 1-density $\si$, that is locally the square 
root of $\tau$ or $-\tau$. That is given $x\in M$ there is an open set $U\subseteq M$ containing $x$ such that $U$ is the disjoint union  
$$
U=  \coprod\limits_{i \in \{+,0,-\}} U_i
$$ with $\si >0$ on $U_+$ and $\si <0$ on $U_-$, and $\si = 0$ on $U_0$. If closed, $(U \backslash U_{\mp},\bp)$ are order 1 projective compactifications of $(U_{\pm},\nabla^{\si})$, with boundary $U_0$. In any case the latter inherits a projective structure. 

%% %If $\nabla^{\mathcal{T}}_cH_{AB} = CH_{AB}$, for some constant
%% $C$, along $\mathcal{Z}(\sigma)$ in directions tangential to
%% $\mathcal{Z}(\sigma)$ then $M_0$ inherits a projective structure. \\
\end{proposition}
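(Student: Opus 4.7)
The plan is to exploit the constant rank 1 hypothesis on $L(\tau)$ to locally factor $L(\tau)_{AB}=\epsilon\si_A\si_B$ for a smooth, nowhere-vanishing cotractor $\si_A$ and a locally constant sign $\epsilon\in\{\pm 1\}$, extract the sought density $\si$ from the top slot of $\si_A$, and verify that $\si$ is a defining density of weight $1$ for $M_0=\mathcal{Z}(\tau)$. The order 1 projective compactness then follows immediately from Proposition \ref{2.2}, while the induced projective structure on $M_0$ emerges from parallelism of $\si_A$, itself forced by the BGG structure of $L(\tau)$.

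In a chosen splitting write $\si_A=Y_A\si+Z_A^a\mu_a$. Matching the slots of $\epsilon\si_A\si_B$ against the explicit formula for $L(\tau)$ given by Corollary \ref{3.3} yields
\[
\epsilon\si^2=\tau,\qquad \epsilon\si\mu_a=\tfrac12\nabla_a\tau,\qquad \epsilon\mu_a\mu_b=\tfrac12\nabla_{(a}\nabla_{b)}\tau+P_{(ab)}\tau.
\]
The first identifies $\si$ as a local square root of $\epsilon\tau=\pm\tau$. Differentiating it and substituting into the middle relation forces $\si(\mu_a-\nabla_a\si)=0$, and then substituting $\mu_a=\nabla_a\si$ into the last relation produces $\si(\nabla_a\nabla_b\si+\si P_{ab})=0$. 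The rank $1$ hypothesis prevents $\mathcal{Z}(\tau)$ from having interior — otherwise $L(\tau)$ would vanish on that open set, contradicting constant rank one — so $\{\si\ne 0\}$ is dense. By continuity we obtain $\mu_a=\nabla_a\si$ and, working in a special $\nabla\in\bp$ so that $P_{ab}$ is symmetric and $\nabla_a\nabla_b\si$ is also symmetric (the curvature of a special connection on $\mathcal{E}(1)$ vanishing), $\nabla_a\nabla_b\si+\si P_{ab}=0$ on all of $M$. In particular $\si_A=D_A\si$ and $\si_A$ is parallel for the tractor connection.

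Since $L(\tau)$ is everywhere of rank one, $\si_A$ is nowhere zero. At any $x\in M_0$ we have $\si(x)=0$, so $0\ne \si_A(x)=Z_A^a\nabla_a\si(x)$, whence $\nabla\si|_{M_0}\ne 0$. Thus $\si$ is a defining density of weight $1$ for $M_0$, and the implicit function theorem realises $M_0$ as a smooth embedded hypersurface. The local partition by the strict sign of $\si$ is then immediate, and Proposition \ref{2.2} applied with $\al=1$ gives the order 1 projective compactness of $\nabla^\si\in\bp$ on each open component $U_\pm$. Finally, the parallel cotractor $\si_A$ is a holonomy reduction of the projective Cartan geometry to the stabiliser of a cotractor in $SL(n+1)$; the closed orbit in the flat model $S^n$ is itself a lower-dimensional projective sphere, and the curved orbit decomposition theory of \cite{CGH2} therefore endows $M_0$ with a canonical projective structure.

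The main obstacle is to establish the smooth local factorisation $L(\tau)=\epsilon\si_A\si_B$ and to extend the identities $\mu_a=\nabla_a\si$ and $\nabla_a\nabla_b\si+\si P_{ab}=0$ across the zero locus $M_0$ — both resting on the density of $\{\si\ne 0\}$ and, for the latter, on the symmetry ensured by a special scale. Once these are in place, the projective compactness assertion is immediate from Proposition \ref{2.2} and the induced projective structure on $M_0$ is a direct application of the general curved orbit theory for the resulting holonomy reduction.
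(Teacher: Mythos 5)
Your proposal is correct, and its core coincides with the paper's argument: both locally factor the rank-one form as $L(\tau)=\pm V_AV_B$ with $V_A$ smooth and nonvanishing, identify $V_A$ with $D_A\si$ for $\si$ a local square root of $\pm\tau$ by a density-plus-continuity argument (the zero set of $\tau$ has empty interior under the rank hypothesis), deduce that $\si$ is a weight~$1$ defining density for $\mathcal{Z}(\tau)$, and then invoke Proposition \ref{2.2} with $\al=1$. Where you diverge is in the mechanism and in the last step. The paper extends $V_A=D_A\si$ across $\mathcal{Z}(\tau)$ by exploiting that the top slots of $\nabla_c L(\tau)$ vanish (so $\si\, X^B\nabla_cV_B=0$) together with a closure argument, and it works in the scale $\nabla^\tau$ to see that the Schouten tensor vanishes off the zero locus; you instead compare the slots of $\epsilon\si_A\si_B$ directly with the formula of Corollary \ref{3.3} and differentiate $\epsilon\si^2=\tau$, which yields the stronger (projectively invariant) conclusion $\nabla_a\nabla_b\si+P_{ab}\si=0$ everywhere, i.e.\ that $D_A\si$ is parallel — a fact the paper leaves implicit. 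For the induced projective structure on $U_0$, the paper argues via the smooth extension of $\si P_{ab}$ (Proposition 8 of \cite{CG2}) and the total-geodesy criterion of \cite{CG3}, then restricts the ambient structure; you instead use the parallelism you established to view $D_A\si$ as a (local) holonomy reduction and appeal to the curved orbit theory of \cite{CGH2}, whose closed orbit in the model is an equatorial projective sphere. Both routes are legitimate: yours buys the normality of the solution and a cleaner conceptual explanation of the boundary structure, at the cost of relying on the general machinery of \cite{CGH2} (and, strictly, on its local applicability, since your parallel section is only local); the paper's route is more self-contained, stays at the level of the explicit slot computations, and additionally records that $U_0$ is totally geodesic and that the off-boundary geometry is Ricci-flat. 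Note also that, having $\nabla_a\nabla_b\si+P_{ab}\si=0$, you could obtain total geodesy directly (on $U_0$ the tangential derivative of the conormal $\nabla_b\si$ vanishes), avoiding the appeal to \cite{CGH2} altogether.
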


\begin{proof} 
Let $H_{AB} \colonequals L(\tau)$. The rank $1$ assumption implies
that, given any $x\in M$ there is an open neighborhood $U$ of $x$ such that
$H_{AB}=fV_AV_B$ for $f$ nowhere zero and $V_A$ also
nonvanishing. We work locally on such an open set.
If $f<0$, replace $\tau$ with its negative. So, without
loss of generality we can take $f= 1$. Then $\sigma \colonequals
X^AV_A = \sqrt{\tau}$. Clearly
$\mathcal{Z}(\sigma)=\mathcal{Z}(\tau)$. We show that $V_A=D_A\sigma$
on the closure of the set $\tilde{U}\colonequals \{x\in U : \sigma(x)
\neq 0\}$ and the hypersurface decomposition follows, as does the
order 1 projective compactification.

Away from $\mathcal{Z}(\tau)$ we can work in the scale $\nabla^{\tau}
\in \bp$ which preserves $\tau$. Then $\nabla^{\tau}$ preserves $\tau$
and $\sigma$ since $\tau= \sigma^2$. Thus it follows that, in the
scale $\nabla^{\tau}$, $D_A\sigma=Y_A \sigma$. Working, away from
$\mathcal{Z}(\tau)$, in the splitting $\nabla^{\tau}$ we see that
$H_{AB} = (\sigma ^2,0,P_{ab} \sigma^2)^t$. By our assumption that
this is rank 1, it follows that the projective Schouten vanishes away
from $\mathcal{Z}(\tau)$, and $V_AV_B=H_{AB}=Y_AY_B\tau$.
%%We see immediately that $D_A\sigma=Y_A\sigma$
So on $U \backslash \mathcal{Z}(\tau)$ we have $H_{AB} =
(D_{A}\sigma)(D_{B}\sigma)$. In order to conclude that
$\mathcal{Z}(\sigma)$ is a smoothly embedded hypersurface we will show
that $H_{AB} = (D_{A}\sigma)(D_{B}\sigma)$ holds on
$\mathcal{Z}(\sigma)\cap U$. Then $D_A\sigma$ nonvanishing implies
that $\nabla_a\sigma \neq 0$ on $\mathcal{Z}(\sigma)$, and the result
follows from the implicit function theorem.

Now the cotractor $V$, in arbitrary scale, is of the form
\begin{align*}
V_{A} = 
\left(
\begin{array}{c}
\sigma \\
\mu_{a} \\
\end{array} 
\right).
\end{align*}
We will show that $X^{B}\nabla_{c}V_{B} = 0$. From the formula for the tractor connection this implies that $\mu_a=\nabla_a\sigma$, whence  $V_{A} = D_{A}\sigma$. Since the top two slots of $\nabla_cL(\tau)$ vanish, regardless of rank assumptions on $L(\tau)$, by Corollary 3.3, we see, in particular, that

\begin{align*}
0 & = X^{A}X^{B}\nabla_{c}H_{AB} =  2X^{A}X^{B}V_{A}(\nabla_cV_{B}) =  2\sigma X^{B}\nabla_cV_{B}.  \\
\end{align*}
Thus $\sigma X^{B}\nabla_cV_{B}=0$. Observe that if $\tilde{U} = \varnothing$ then $\mathcal{Z}(\sigma)$ is all of $U$, implying $L(\tau)$ has rank zero, a contradiction. Thus $\tilde{U}$ is nonempty. It follows, from the continuity of $\sigma$, that $\tilde{U}$ is open. Then on this nonempty open set we see that $X^{B}\nabla_cV_{B} = 0$. On the other hand, the zero locus $\mathcal{Z}(X^{B}\nabla_cV_{B})$ is a closed set, also by continuity. But the smallest closed set containing a given open set is the closure of that open set, hence cl$(\tilde{U}) \subseteq \mathcal{Z}(X^{B}\nabla_cV_{B})$, implying that on cl$(\tilde{U})$, 
\begin{align*}
V_{A} = 
\left(
\begin{array}{c}
\sigma \\
\nabla_{b}\sigma \\
\end{array} 
\right).
\end{align*}
But $V_A$ is nonvanishing on $U$, and in particular, on cl$(\tilde{U})$. Thus $\nabla_b\sigma \neq 0$ on the boundary of cl$(\tilde{U})$. Thus $\mathcal{Z}(\sigma)$ is is a smooth hypersurface. We can conclude that $V_A=D_A\sigma$ everywhere on the local region where $H_{AB}=V_AV_B$.

By Proposition 8 in \cite{CG2} we know that $\sigma P_{ab}$ admits a smooth extension to $U_0$, and by \cite{CG3} Proposition 3.1 $U_0$ is totally geodesic if and only if the smooth extension vanishes identically on $U_0$, where $U_0$ is given in the Proposition statement. But $P_{ab}$ vanishes identically on the open set $U \backslash U_0$ hence any smooth extension vanishes on this set as well as its closure (in $U$), $U$. We conclude that $U_0$ is totally geodesic, whence it inherits a projective structure via restriction of the ambient projective structure. Further, since the Schouten is symmetric it is a scalar multiple of the Ricci, the structure is in fact Ricci-flat.
If $\mathcal{Z}(\tau)$ is orientable then a consistent choice of sign for $\sigma$ can be made in which case the local sections $V_A$ can be patched to form a global nonvanishing section and $\mathcal{Z}(\sigma)$ is a separating hypersurface.
\end{proof}

Next we proceed to considering the case where
$\mathcal{R}(L(\zeta^{ab}))=n$.  Note that in this  case the generalized
scalar curvature $\operatorname{det}({L(\zeta^{ab})})$
obviously vanishes on all of $M$ since $L(\ze)$ is corank $1$. Thus trivially we
have the following:

\begin{proposition} \label{4.9}
  Let $(M^n, \bp)$ be a projective manifold equipped with a solution $\zeta$ of the metrizability equation such that $\mathcal{R}(L(\zeta))=n$. Then on $M \backslash \mathcal{D}(\zeta^{ab})$ the metric (inverse) $g^{ab} = \operatorname{sgn}(\tau)
\tau \zeta^{ab}$
  corresponding to $\zeta$  is scalar-flat.
\end{proposition}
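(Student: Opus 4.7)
The plan is to combine the corank-one hypothesis on $L(\zeta)$ with the scalar-curvature interpretation of $\det L(\zeta)$ recalled in the paragraph preceding Remark \ref{4.7}. First, I would observe that $L(\zeta^{ab})$ is a symmetric bilinear form on the cotractor bundle $\mathcal{E}^{AB}$, which has rank $n+1$; the hypothesis $\mathcal{R}(L(\zeta))=n$ therefore forces $L(\zeta)$ to be of corank one at every point of $M$. In particular, the tractor determinant $S \colonequals \det L(\zeta^{ab})$, defined as in Section \ref{tbundles} using the parallel tractor $\epsilon^2_{A_0 \cdots A_n B_0 \cdots B_n}$, vanishes identically on $M$.

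Second, I would invoke Proposition 3 of \cite{CG3}, recalled in the excerpt between Theorem \ref{4.6} and Remark \ref{4.7}: the function $S$ is smooth on all of $M$ and, on each open stratum $M_{\pm}$ where $\zeta^{ab}$ has maximal rank and defines the (inverse) metric $g^{ab}=\operatorname{sgn}(\tau)\tau\zeta^{ab}$, it agrees up to a fixed nonzero constant with the scalar curvature of $g_{\pm}$. Combining this with the identical vanishing of $S$ from the previous step yields that the scalar curvature of $g$ vanishes on all of $M\setminus\mathcal{D}(\zeta^{ab})$, which is exactly the claim of scalar-flatness.

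Since the paper itself signals the result as immediate (``thus trivially we have the following''), I do not anticipate any genuine obstacle; the work has already been done in establishing the tractor interpretation of the scalar curvature. The only point worth being careful about is that the proportionality constant between $S$ and the scalar curvature on each open component $M_{\pm}$ is universal (independent of the point), which is clear from the fact that both sides are obtained as polynomial natural invariants evaluated in the scale $\nabla^{\tau}$, where $L(\zeta)$ takes the explicit form given in Corollary \ref{3.3}.
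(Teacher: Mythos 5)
Your proposal is correct and follows essentially the paper's own route: the paper likewise notes that the corank-one hypothesis forces $\det L(\zeta)$ to vanish and then appeals to the form of $L(\zeta)$ in the metric scale from Proposition 3 of \cite{CG3} (where the bottom slot is $\tfrac{1}{n}\operatorname{sgn}(\tau)\tau^{-1}R$), so that the rank condition kills the scalar curvature on $M\setminus\mathcal{D}(\zeta)$. The only cosmetic difference is that you pass through the identity $\det L(\zeta)\propto R$, while the paper reads off $R=0$ directly from the block form of $L(\zeta)$ together with $\mathcal{R}(L(\zeta))=n$.
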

Indeed considering $\zeta^{ab}$ away from its degeneracy
locus we have $g^{ab} = \operatorname{sgn}(\tau)
\tau \zeta^{ab}$, where $\tau =
\operatorname{\bdet}(\ze^{ab})$.
%\edz{RS: Here $\tau =
%\operatorname{\bdet}(\ze^{ab})$}
Letting $\nabla^{g} \in \bp$ denote
the corresponding Levi-Civita connection, and $R$ its scalar curvature
then in the scale determined by $g$ we have, as in \cite{CG3}
Proposition 3, that
\begin{align*}
L(\ze^{ab}) = 
\left(
\begin{array}{cc}
\operatorname{sgn}(\tau)\tau^{-1}g^{ab} \\
0 \\
\frac{1}{n}\operatorname{sgn}(\tau)\tau^{-1}R \\
\end{array} 
\right).
\end{align*} 
Since $g^{ab}$ is nondegenerate, and $L(\ze)$ is of corank $1$, it follows that $R=0$. Thus $M \backslash \mathcal{D}(\zeta^{ab})$ is scalar-flat.

Given a solution $\ze$ of the metrizability equation such that $\mathcal{R}(L(\zeta))=n$, its pointwise inverse is undefined. So, in order to study the degeneracy locus $\mathcal{D}(\zeta^{ab})$, we form the adjugate
%%%(i.e.\ the object that is its cofactor transpose when working in a local frame)
$\overline{\mathcal{H}}_{AB}$, of $H^{AB}\colonequals L(\ze^{ab})$ by taking

\begin{align}\label{tradj}
\overline{\mathcal{H}}_{A_{0}B_{0}}:=\epsilon^2_{A_0A_1\cdots
A_nB_0B_1\cdots B_n} H^{A_{1}B_{1}}\cdots H^{A_nB_n}.
\end{align}

Since $H$ has rank $n$, we have that

\begin{align*}
H^{AB}\overline{\mathcal{H}}_{BC}= \frac{1}{n+1}\text{\normalfont det}(H^{AB})\delta^{A}_{C}=0.
\end{align*} 
$\overline{\mathcal{H}}_{AB}$ inherits its symmetry from $H^{AB}$, as
is clear from \eqref{tradj}, and is rank $1$ by the assumption that
$H^{AB}$ is rank $n$. As a symmetric rank $1$ tensor,
$\overline{\mathcal{H}}_{AB}$ is certainly locally simple. So  given
a point $x \in M$ there exists an open set $U \subseteq M$ containing
$x$ such that, on $U$, $\overline{\mathcal{H}}_{AB}= fI_{A}I_{B}$ for
some smooth function $f$ and cotractor $I$, both of which must be
nonvanishing since otherwise there would exist points in $M$ with
$\mathcal{R}(L(\ze))<n$, contradicting our assumption.  Then Lemma
\ref{4.4} {\em mutatis mutandis}, shows that
$X^AX^B\overline{\mathcal{H}}_{AB}=\bdet(\zeta^{ab})$. If $f>0$ define
$\mathcal{H}_{AB}\colonequals \overline{\mathcal{H}}_{AB}$ and if
$f<0$ define $\mathcal{H}_{AB}\colonequals
-\overline{\mathcal{H}}_{AB}$. We call $\mathcal{H}$ the {\em signed
  adjugate} of $H$. Then $\mathcal{H}$ is simple, symmetric, satisfies
$H^{AB}\mathcal{H}_{BC}=0$, and locally $\mathcal{H}_{AB}=fI_AI_B$ for
$f>0$.  So we can smoothly take a square root of $f$, and absorb
$\sqrt{f}$ into $I$. Thus without loss of generality we can take $f$
to be the constant function 1 and simply write $\mathcal{H}_{AB}=I_AI_B$,
locally, for a nonvanishing $I$.

\begin{proposition} \label{4.10}
Let $(M^n, \bp)$ be an projective manifold. Suppose that $H^{AB} \in \Gamma(\mathcal{E}^{(AB)})$ has rank $n$, with signed adjugate given locally by $\mathcal{H}_{AB}=I_AI_B$, $\zeta^{ab} \colonequals Z_{A}^{a}Z_{B}^{b}H^{AB}$, and $\sigma \colonequals X^{A}I_{A}$. Then the following hold locally: \\

$(i)$ $H^{AB}I_{A}=0.$ \\

$(ii)$ $\mathcal{D}(\zeta^{ab}) = \mathcal{Z}(\sigma)$.
\end{proposition}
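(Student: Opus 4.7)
The plan is to deduce both assertions directly from the two identities already assembled in the paragraph preceding the Proposition statement: the adjugate identity $H^{AB}\overline{\mathcal{H}}_{BC}=\tfrac{1}{n+1}\det(H^{AB})\,\delta^A_C$, which vanishes because $H$ has corank $1$, together with the tractor analogue of Lemma \ref{4.4}, which was noted in the preceding discussion to give $X^AX^B\overline{\mathcal{H}}_{AB}=\bdet(\zeta^{ab})$. Both facts transfer to $\mathcal{H}$ up to an overall sign, since $\mathcal{H}_{AB}=\pm\overline{\mathcal{H}}_{AB}$.

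For part (i), I would first observe that the rank-$n$ hypothesis forces $\det(H^{AB})=0$, so that the adjugate identity gives $H^{AB}\overline{\mathcal{H}}_{BC}=0$ and hence $H^{AB}\mathcal{H}_{BC}=0$. Substituting the local decomposition $\mathcal{H}_{BC}=I_BI_C$ yields the tensorial identity $(H^{AB}I_B)\,I_C=0$. Because $I_A$ is pointwise nonvanishing (as established in the paragraph preceding the Proposition, again a consequence of the constant rank-$n$ assumption), at each point one may choose a tractor $V^C$ with $V^CI_C\neq 0$; contracting the free $C$-index with $V^C$ immediately gives $H^{AB}I_B=0$.

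For part (ii), I would combine the tractor Lemma \ref{4.4} identity with $\mathcal{H}_{AB}=I_AI_B$ and the definition $\sigma=X^AI_A$ to compute
\[
\pm\,\bdet(\zeta^{ab}) \;=\; X^AX^B\mathcal{H}_{AB} \;=\; (X^AI_A)(X^BI_B) \;=\; \sigma^2,
\]
so $\bdet(\zeta^{ab})$ and $\sigma$ share the same zero locus. Since $\zeta^{ab}$ is a density-weighted symmetric bilinear form on $T^*M^n$, its degeneracy locus $\mathcal{D}(\zeta^{ab})$ is exactly $\mathcal{Z}(\bdet(\zeta^{ab}))$ by construction of the invariant determinant in Section \ref{background}, and the claim follows.

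I do not foresee any serious obstacle. The only step requiring a little care is the passage from $(H^{AB}I_B)I_C=0$ to $H^{AB}I_B=0$ in part (i), which is purely algebraic but relies essentially on the nowhere-vanishing of $I_A$ guaranteed by the constant-rank hypothesis; everything else is bookkeeping on identities already in place.
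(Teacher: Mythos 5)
Your proposal is correct and follows essentially the same route as the paper: part (i) from $H^{AB}\mathcal{H}_{BC}=0$ together with the nonvanishing of $I$, and part (ii) from the Lemma \ref{4.4}-type identity $X^AX^B\mathcal{H}_{AB}=\pm\,\text{\normalfont\bdet}(\zeta)$ combined with $\mathcal{H}_{AB}=I_AI_B$ and $\sigma=X^AI_A$. You merely spell out a couple of algebraic steps (the contraction removing the factor $I_C$, and $\mathcal{D}(\zeta)=\mathcal{Z}(\text{\normalfont\bdet}(\zeta))$) that the paper treats as immediate.
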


\begin{proof}
$(i)$ This follows trivially from the fact that $\mathcal{H}_{AC}H^{AB}=0$. \\

$(ii)$ This is just a simple computation,
\begin{align*}
\mathcal{D}(\zeta^{ab}) & = \mathcal{D}(Z_{A}^{a}Z_{B}^{b}H^{AB}) = \mathcal{Z}(\operatorname{\bdet}(Z_{A}^{a}Z_{B}^{b}H^{AB})) =\mathcal{Z}(X^{A}X^{B}\mathcal{H}_{AB}) =\mathcal{Z}(X^AX^BI_AI_B)
\end{align*}
where the third equality follows from the proof of Lemma \ref{4.4}.
\end{proof}

\begin{theorem}\label{4.11}
  Let $(M,\bp)$ be a connected, projective 
manifold equipped with a solution $\ze$ of the metrizablility 
equation such that $L(\zeta)$ has signature $(p,q,1)$. If 
$\mathcal{D}(\zeta)=\varnothing$, then $\zeta$ induces a scalar-flat pseudo-Riemannian metric of signature $(p,q)$ on $M$. Otherwise, if $\varnothing \subsetneq \mathcal{D}(\ze)\subsetneq M$, then $\mathcal{D}(\ze)$ is a smoothly embedded hypersurface. If $M$ and $\mathcal{D}(\ze)$ are orientable then $\mathcal{D}(\ze)$ is a separating hypersurface and stratifies $M$ by the strict sign of a canonical 1-density, $\si$, that is locally a square 
root of $\bdet(\ze)$ or $-\bdet(\ze)$. The partition of $M$ is given by  
$$
M=  \coprod\limits_{i \in \{+,0,-\}} M_i
$$ with $\si >0$ on $M_+$ and $\si <0$ on $M_-$, and $\si = 0$ on $M_0$. \\
\end{theorem}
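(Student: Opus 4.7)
The plan is to treat the cases $\mathcal{D}(\zeta)=\varnothing$ and $\varnothing \subsetneq \mathcal{D}(\zeta)\subsetneq M$ separately. In the former, $\zeta$ has rank $n$ throughout $M$, so by the Mikes--Sinjukov correspondence $g^{ab}\colonequals \operatorname{sgn}(\tau)\tau\zeta^{ab}$ (with $\tau\colonequals\bdet(\zeta)$) defines a pseudo-Riemannian metric whose Levi-Civita connection lies in $\bp$; in the corresponding $g$-scale Proposition~\ref{4.9} gives $L(\zeta)=(\operatorname{sgn}(\tau)\tau^{-1}g^{ab},0,\frac{1}{n}\operatorname{sgn}(\tau)\tau^{-1}R)^T$, and since $\mathcal{R}(L(\zeta))=n$ this forces $R\equiv 0$, yielding the claimed scalar-flat metric of signature $(p,q)$ (the null direction of $L(\zeta)$ necessarily occupying the bottom $X\otimes X$ slot).

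For the non-empty case I will apply Proposition~\ref{4.10}: on any local neighborhood the signed adjugate $\mathcal{H}_{AB}$ of $H^{AB}\colonequals L(\zeta)$ factors as $\mathcal{H}_{AB}=I_AI_B$ for a nowhere-vanishing cotractor $I_A$, and $\mathcal{D}(\zeta)=\mathcal{Z}(\sigma)$ where $\sigma\colonequals X^AI_A$. The crux of the argument will be the local identity $I_A=D_A\sigma$; once this is in hand, the fact that the top slot of $D_A\sigma$ is $\sigma$, combined with the nowhere vanishing of $I_A$, immediately forces $\nabla_a\sigma\neq 0$ on $\mathcal{Z}(\sigma)$, and the implicit function theorem then delivers $\mathcal{D}(\zeta)$ as a smoothly embedded hypersurface. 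I plan to verify this identity first on $M\setminus\mathcal{D}(\zeta)$ by specializing to the $g$-scale: there $L(\zeta)$ collapses to $(\operatorname{sgn}(\tau)\tau^{-1}g^{ab},0,0)^T$ and a direct computation of the signed adjugate yields $\mathcal{H}_{AB}=\sigma^2 Y_AY_B$, whence $I_A=\sigma Y_A$. Since $\nabla^g$ preserves both $\zeta$ and the canonical section $\epsilon^2$ it preserves $\bdet(\zeta)$ and hence (up to sign) $\sigma^2$, so $\nabla^g\sigma=0$ off $\mathcal{D}(\zeta)$; in this scale $D_A\sigma=Y_A\sigma+Z_A^a\nabla^g_a\sigma$ thus reduces to $\sigma Y_A=I_A$, and projective invariance of $I_A$ and $D_A\sigma$ makes the identity scale-free. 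The closure/continuity argument in the proof of Proposition~\ref{4.8} then extends the identity to $\overline{M\setminus\mathcal{D}(\zeta)}$, and the nowhere vanishing of $I_A$ rules out any non-empty interior of $\mathcal{Z}(\sigma)$: on such an interior $\sigma\equiv 0$ and $\nabla\sigma\equiv 0$ would force $D_A\sigma\equiv 0$, contradicting $I_A\neq 0$ on the boundary of the interior by continuity.

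For the orientability assertion, $I_A$ (and hence $\sigma$) is only defined locally up to the $\mathbb{Z}/2$ ambiguity $I\mapsto -I$ that preserves $\mathcal{H}=I\otimes I$. The plan is to use the hypothesis that $M$ and $\mathcal{D}(\zeta)$ are both orientable to trivialise the normal bundle of $\mathcal{D}(\zeta)$ in $M$ and thereby fix a global sign, patching the local $\sigma$'s into a global nowhere-vanishing section of $\mathcal{E}(1)$ satisfying $\sigma^2=\pm\bdet(\zeta)$; together with $\nabla_a\sigma\neq 0$ along $\mathcal{D}(\zeta)$ this delivers the separating hypersurface and the stratification $M=M_+\sqcup M_0\sqcup M_-$ by the strict sign of $\sigma$. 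The main obstacle I anticipate is the clean implementation of this closure and empty-interior step, together with tracking the signs carefully so that the signed-adjugate computation in the $g$-scale really gives $I_A=\sigma Y_A$ rather than its negative; once these are settled the remaining manipulations with tractor calculus and the prolonged system should proceed routinely.
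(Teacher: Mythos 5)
Your proposal is correct and follows essentially the same architecture as the paper's proof: the signed adjugate $\mathcal{H}_{AB}=I_AI_B$ and $\sigma\colonequals X^AI_A$ from Proposition~\ref{4.10}, the key identity $I_A=D_A\sigma$, nonvanishing of $I_A$ forcing $\nabla_a\sigma\neq 0$ along $\mathcal{Z}(\sigma)$, the implicit function theorem, and an orientability argument to patch signs; the case $\mathcal{D}(\ze)=\varnothing$ is handled exactly as in Proposition~\ref{4.9}. The two execution differences are minor but worth noting. Off the degeneracy locus you compute the adjugate directly in the $g$-scale to get $I_A=\sigma Y_A=D_A\sigma$, where the paper instead observes $(D_A\sigma)H^{AB}=0$ and uses that $H^{AB}$ has one-dimensional kernel with $X^AD_A\sigma=\sigma=X^AI_A$; these are interchangeable. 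To cross $\mathcal{Z}(\sigma)$ you invoke plain continuity of $I_A$ and $D_A\sigma$ on $\operatorname{cl}\{\sigma\neq 0\}$, whereas the paper derives $\sigma X^B\nabla_cI_B=0$ from the prolonged system \eqref{EMconn} and argues as in Proposition~\ref{4.8}; your shortcut is legitimate and reaches the same set (both only yield the identity on $\operatorname{cl}\{\sigma\neq 0\}$), though the paper's intermediate fact $X^B\nabla_cI_B=0$ gets reused in Lemma~\ref{4.13}. Your handling of a possible interior of $\mathcal{Z}(\sigma)$ is actually more explicit than the paper's bare ``nowhere dense'' assertion; just state that a boundary point of that interior lies in $\operatorname{cl}\{\sigma\neq 0\}$, where $I_A=D_A\sigma$ is already established, since that is what converts $D_A\sigma=0$ there into the contradiction with $I_A\neq 0$ (connectedness of $M$ and $\mathcal{D}(\ze)\subsetneq M$ then exclude the interior being clopen). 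Finally, one wording slip: the patched global $\sigma$ is not a nowhere-vanishing section of $\mathcal{E}(1)$ --- it vanishes on $M_0$; what is nowhere vanishing is $I_A=D_A\sigma$, i.e.\ $\sigma$ is a global defining density, which is exactly what the separation argument of Section~\ref{proj-c-sec} requires.
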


\begin{proof}
  Write $\mathcal{H}_{AB}$ for the signed adjugate of
  $H^{AB}=L(\zeta^{ab})$, as defined above. Then given any $x\in M$
  there is an open neighborhood $\tilde{U}$ of $x$ such that
  $\mathcal{H}_{AB}=I_AI_B$ for a nonvanishing cotractor $I_A$. We
  work locally on a fixed such set.

 Away from $\mathcal{D}(\ze^{ab})$ we can work in the scale $\nabla^g \in \bp$ where $\tau=\bdet(\zeta)$ and $g^{ab}=\operatorname{sgn}(\tau)\tau\ze^{ab}$. Then $\nabla^g$ preserves $g^{ab}$, $\tau$, and hence $\zeta$. Further, $\nabla^g$ preserves $\sigma$ since $\tau=\pm \sigma^2$. Thus it follows that, in the scale $\nabla^g$, $D_A\sigma=Y_A \sigma$ and by Proposition \ref{4.9} $H^{AB}=W_a^AW_b^B \operatorname{sgn}(\tau)\tau^{-1}g^{ab}$. So
\[
(D_A\sigma) H^{AB}= Y_A\sigma W_a^AW_b^B \operatorname{sgn}(\tau)\tau^{-1}g^{ab}=0.
\] Since $H^{AB}$ has a one-dimensional kernel and $X^AD_A\sigma=X^AI_A$ we see that $D_A\sigma = I_A$ on $\tilde{U} \cap (M \backslash \mathcal{D}(\ze^{ab}))$ so that $\mathcal{H}_{AB} = (D_{A}\sigma)(D_{B}\sigma)$ on $\tilde{U} \cap (M \backslash \mathcal{D}(\ze^{ab}))$. In order to conclude that $\mathcal{Z}(\sigma)=\mathcal{D}(\zeta^{ab})$ is a smoothly embedded hypersurface we will show that $\mathcal{H}_{AB} = (D_{A}\sigma)(D_{B}\sigma)$ also holds on $\mathcal{Z}(\sigma)\cap \tilde{U}$. Then $D_A\sigma$ nonvanishing implies that $\nabla_a\sigma \neq 0$ on $\mathcal{Z}(\sigma)$, and the result follows from the implicit function theorem.

The argument is now similar to that in the proof of Proposition \ref{4.8}.
The cotractor $I$, in any scale, is of the form
\begin{align*}
I_{A} = 
\left(
\begin{array}{c}
\sigma \\
\mu_{a} \\
\end{array} 
\right).
\end{align*}
We will show that $X^{B}\nabla_{c}I_{B} = 0$, which, according the formula for the tractor connection, implies that $\mu_a=\nabla_a\sigma$, whence  $I_{A} = D_{A}\sigma$. We begin by computing,

\begin{align*}
X^{A}X^{B}\nabla_{c}\mathcal{H}_{AB} & =  2X^{A}X^{B}I_{A}(\nabla_cI_{B}) =  2\sigma X^{B}\nabla_cI_{B}.  \\
\end{align*}
But we also have that
\begin{align*}
X^{A_n}X^{B_n}\nabla_{c}\mathcal{H}_{A_nB_n} & = \pm nX^{A_n}X^{B_n}\epsilon^2_{A_0\cdots A_nB_0\cdots B_n}H^{A_0B_0}\cdots H^{A_{n-2}B_{n-2}}(\nabla_{c}H^{A_{n-1}B_{n-1}})\\
& = \mp 2X^{A_n}X^{B_n}\epsilon^2_{A_0\cdots A_nB_0\cdots B_n}H^{A_0B_0}\cdots H^{A_{n-2}B_{n-2}} (X^{(A_{n-1}}W\indices{_{cE}^{B_{n-1})}_{F}}H^{EF}) \\
& = 0.
\end{align*}
Thus we see that $\sigma X^{B}\nabla_cI_{B}=0$.
Let $U \colonequals \{ x \in \tilde{U} : \sigma (x) \neq 0\}$, and suppose that $U$ is nonempty. Arguing exactly as in the proof of Proposition \ref{4.8} we conclude that 
%% It follows, from the continuity of $\sigma$, that $U$ is open. Then on the open set $U$ we see that $X^{B}\nabla_cI_{B} = 0$. On the other hand, the zero locus $\mathcal{Z}(X^{B}\nabla_cI_{B})$ is a closed set, also by continuity. But the smallest closed set containing a given open set is the closure of that open set, hence $\overline{U} \subseteq \mathcal{Z}(X^{B}\nabla_cI_{B})$, where $\overline{U}$ denotes the closure (in $\tilde{U}$) of $U$, implying that on $\overline{U}$, 
\begin{align*}
I_{A} = 
\left(
\begin{array}{c}
\sigma \\
\nabla_{b}\sigma \\
\end{array} 
\right).
\end{align*}
We have already that $I_A$ is nonvanishing on $\tilde{U}$, and in
particular, on $\overline{U}$. Thus $\nabla_b\sigma \neq 0$ on
$\partial\overline{U}$. Thus $\mathcal{Z}(\sigma)$ is  a smooth
hypersurface. We can conclude that $I_A=D_A\sigma$ everywhere on the
local region where $\mathcal{H}_{AB}=I_AI_B$.

Then, since $\nabla_a \sigma \neq 0$ on $\mathcal{Z}(\sigma)$ it follows that $\mathcal{Z}(\sigma)$ is a smoothly embedded hypersurface where either $\sigma = \sqrt{\bdet(\zeta)}$ or $\sigma = -\sqrt{\bdet(\zeta)}$. If $\mathcal{Z}(\si)$ is orientable then a consistent choice of sign for $\sigma$ can be made in which case the local sections $I_A$ can be patched to form a global nonvanishing section and $\mathcal{Z}(\sigma)$ is a separating hypersurface.
So, provided that there exists a point $x \in M$ such that $\sigma(x) \neq 0$, the zero locus of $\sigma$ is nowhere dense so we conclude by the implicit function theorem that $\mathcal{Z}(\sigma)$ is a smoothly embedded hypersurface with $\sigma$ a defining density of weight 1 for the hypersurface. The claimed manifold decomposition follows trivially. If no such point exists, i.e if $U \colonequals \{ x \in M : \sigma (x) \neq 0\} = \varnothing$ then $\mathcal{Z}(\sigma)$ is all of $M$. 
\end{proof}

As a simple example to show that the hypersurface in the Theorem \ref{4.11} is, in general, only locally separating consider $H\subseteq \mathbb{R}^{n+1}$, an n-plane through the origin. Projectivizing gives $\mathbb{RP}^{n-1} \subseteq \mathbb{RP}^n$ as a non-separating hypersurface. 

In the following we carry forward the notation introduced above in the Theorem \ref{4.11} and its proof.
\begin{proposition}\label{4.12}
Let $(M,\bp)$ be a connected, projective 
manifold equipped with a solution $\ze$ of the metrizablility 
equation such that $L(\ze)$ has signature $(p,q,1)$ and $\varnothing \subsetneq \mathcal{D}(\ze)\subsetneq M$. Then $\mathcal{R}(\zeta)=n-1$ on an open dense set of $\Sigma \colonequals \mathcal{D}(\ze)$. Further, $\Sigma$ decomposes according to the strict sign of a canonical density $\hat{\tau} \in \Gamma(\mathcal{E}(2)|_{\Sigma})$
\[
\Sigma =  \coprod\limits_{i \in \{+,0,-\}} \Sigma_i
\]
with $\hat{\tau} >0$ on $\Sigma_+$ and $\hat{\tau} <0$ on $\Sigma_-$, and $\hat{\tau} = 0$ on $\Sigma_0$. Moreover $\Sigma_0$ is a smoothly embedded hypersurface in $\Sigma$.

\end{proposition}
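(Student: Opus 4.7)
The plan is to restrict $\zeta$ to $\Sigma := \mathcal{D}(\ze)$, show it descends to a symmetric bilinear form $\hat{\zeta}$ on $T^*\Sigma$, take $\hat\tau := \bdet(\hat{\zeta})$ as the canonical weight-$2$ density on $\Sigma$, and use the metrizability equation to control its zero locus. Setting $\mu_a := \nabla_a\sigma$, Theorem \ref{4.11} gives that $I_A = D_A\sigma$ is nowhere vanishing and restricts on $\Sigma$ to $I_A = Z^a_A\mu_a$. Proposition \ref{4.10}$(i)$ then yields $H^{AB}I_A = 0$; writing this in a splitting, the $W^B_b$- and $X^B$-components are $\zeta^{ab}\mu_a + \sigma\lambda^b = 0$ and $\lambda^a\mu_a + \rho\sigma = 0$, which on $\Sigma$ reduce to $\zeta^{ab}\mu_a = 0$ and $\lambda^a\mu_a = 0$. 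Hence $\mu \in \ker\ze^{ab}|_\Sigma$ so that $\ze|_\Sigma$ descends to $\hat{\zeta}\in \Gamma(\mathcal{E}^{ij}(-2)|_\Sigma)$, and we set $\hat\tau := \bdet(\hat{\zeta})$. Since $\ze^{ab}$ is the $n\times n$ submatrix of the rank-$n$ tractor $H^{AB}$ obtained by removing the $Y$-slot, removing one row and column can drop the rank by at most two, so $\mathcal{R}(\ze)|_\Sigma \in \{n-1, n-2\}$.

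The core step is to show $\mathcal{R}(\ze) = n-2$ cannot hold on any open subset of $\Sigma$. Suppose $U \subseteq \Sigma$ is open with $\dim\ker\zeta = 2$ throughout, and pick a smooth nonvanishing cotangent section $v$ on $U$ with $v \notin \operatorname{span}(\mu)$ and $\zeta^{ab}v_a = 0$. A splitting computation yields $H^{AB}(Z^a_A v_a) = X^B(\lambda^a v_a)$, and since $Z^a_A v_a \notin \operatorname{span}(I_A)$, the $1$-dimensionality of $\ker H^{AB}$ forces $\lambda^a v_a \neq 0$ on $U$. Extending $v$ smoothly to a neighborhood of $U$ and setting $f := \zeta^{ab}v_a v_b$, we have $f|_U = 0$, so $\xi^c\nabla_c f|_U = 0$ for every $\xi \in T\Sigma$. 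Using the metrizability prolongation $\nabla_c\ze^{ab} = -(\delta^a_c\lambda^b + \delta^b_c\lambda^a)$ (from the proof of Proposition \ref{3.2}) together with $\zeta^{ab}v_a|_U = 0$, a direct calculation gives $\xi^c\nabla_c f|_U = -2(\xi^c v_c)(\lambda^a v_a)$. Combining with $\lambda\cdot v \neq 0$ forces $\xi^c v_c = 0$ for every $\xi\in T\Sigma$, i.e.\ $v\propto\mu$ on $U$, contradicting the choice of $v$.

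Thus $\Sigma_0 := \mathcal{Z}(\hat\tau)$ has empty interior, and setting $\Sigma_\pm := \{\pm\hat\tau > 0\}$ yields the decomposition $\Sigma = \Sigma_+ \sqcup \Sigma_0 \sqcup \Sigma_-$. For smoothness of $\Sigma_0$: at $x \in \Sigma_0$, $\hat{\zeta}$ has rank $n-2$ and locally $\operatorname{adj}(\hat{\zeta}) = c\,\alpha\otimes\alpha$, with $\alpha$ spanning $\ker\hat{\zeta}$. Jacobi's formula gives $\nabla_i\hat\tau = c\,\alpha_j\alpha_k\nabla_i\hat{\zeta}^{jk}$; lifting $\alpha$ to $v\in\ker\ze|_\Sigma$ with $v\notin\operatorname{span}(\mu)$ and rerunning the tangential metrizability computation from the previous paragraph produces $\nabla_i\hat\tau = -2c\,\alpha_i(\lambda^a v_a)$, which is nonzero since $\alpha\neq 0$ and $\lambda\cdot v \neq 0$. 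The implicit function theorem then gives that $\Sigma_0$ is a smoothly embedded hypersurface in $\Sigma$. The main obstacle is the contradiction argument of the second paragraph, where the rank hypothesis on $L(\ze)$ and the first-order metrizability equation interact to rule out pathological kernel behavior on $\Sigma$; once that is in hand, the same tangential computation simultaneously furnishes the gradient bound needed for smoothness of $\Sigma_0$.
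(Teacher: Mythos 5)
Your argument is correct, but it runs along a different track from the paper's. The paper never forms $\bdet(\hat{\zeta})$ at this stage: it chooses $\overline{I}^A$ with $I_A\overline{I}^A=1$, uses the projection $\hat{\delta}^A_B=\delta^A_B-I_B\overline{I}^A$ to invert $H^{AB}=L(\ze)$ on the subbundle $I^{\perp}$, defines the canonical density as $\hat\tau:=X^AX^B\hat{H}_{AB}$ (so $\mathcal{Z}(\hat\tau)$ is where $X$ is null for $\hat H$), and then gets $\xi^c\nabla_c\hat\tau\neq 0$ on $\mathcal{Z}(\hat\tau)$ by differentiating $\hat\delta^C_A=\hat H_{AB}H^{BC}$ and invoking the prolongation connection \eqref{EMconn} together with nondegeneracy of $\hat H_{AB}$; your route instead stays at slot level, taking $\hat\tau:=\bdet(\hat\zeta)$, using the adjugate of $\hat\zeta$ and Jacobi's formula, the explicit prolongation $\nabla_c\ze^{ab}=-2\delta_c^{(a}\lambda^{b)}$, and the kernel identity $H^{AB}I_A=0$ (Proposition \ref{4.10} plus $I_A=D_A\sigma$ from Theorem \ref{4.11}) to force $\lambda^av_a\neq 0$ on kernel directions transverse to $\mu_a=\nabla_a\sigma$. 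The two densities agree up to a nowhere-vanishing factor (the paper notes this in the proof of Theorem \ref{4.14}, via a Lemma \ref{4.4}-type computation), so the decomposition is the same; your approach is more elementary (no $\hat H_{AB}$, $\overline{I}^A$, or \eqref{EMconn} needed), and is close in spirit to the adjugate computation in Theorem \ref{4.6}(iii), while the paper's tractorial version feeds more directly into the later analysis of $\Sigma$. Two small points to tidy: the weight bookkeeping for $\bdet_\Sigma(\hat\zeta)$ as a section of $\mathcal{E}(2)|_\Sigma$ uses the identification of intrinsic and ambient densities via $\nabla_a\sigma$ (set up in Theorem \ref{4.14}(i), but elementary and independent of this proposition), and in your final paragraph the extension of $\alpha$ need not stay in $\ker\hat\zeta$ away from $x$ -- the computation still goes through because the Leibniz terms are killed pointwise by $\hat\zeta^{jk}\alpha_j|_x=0$ and because $\zeta$ is tangential along all of $\Sigma$, so the ambient extension computes the tangential derivative; it is worth saying this explicitly. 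Note also that your second paragraph (ruling out rank $n-2$ on an open set) is logically subsumed by the third, since the nonvanishing of $\nabla\hat\tau$ on $\mathcal{Z}(\hat\tau)$ already gives that this locus is a hypersurface, hence nowhere dense.
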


\begin{proof}
  Claim 1: $\mathcal{R}(\ze^{ab})\geq n-2$ on $M$.

  \smallskip
  
\noindent Proof of Claim 1: This follows from the argument used  in the proof of part (iii) Theorem
\ref{4.6}.  

\medskip

\noindent   Claim 2: The set of points in $\Sigma$ for which $\mathcal{R}(\zeta^{ab})= n-2$ is a smoothly embedded hypersurface of $\Sigma$.

\smallskip

\noindent Proof of Claim 2: We work locally as in Theorem \ref{4.11}.
The section $I_A \in \Gamma(\mathcal{E}_A|_{\Sigma})$ gives a well
defined subbundle $I^{\perp} \subset \mathcal{E}^A|_{\Sigma}$ given by
all $V^A \in \mathcal{E}^A$ such that $V^AI_A=0$. Choose
$\overline{I}^A\in \Gamma(\mathcal{E}^A)$ such that
$I_A\overline{I}^A=1$. Then $\hat{\delta}^A_B \colonequals \delta^A_B
- I_B\overline{I}^A$ defines a projection from $\mathcal{E}^A$ to the
subbundle $I^{\perp}$.  Let a hat over a section denote its projection
via $\hat{\delta}$ to $I^{\perp}$ or its dual $I_{\perp}$. For example writing $H^{AB}:=L(\zeta)$, we have
$\hat{H}^{AB}\colonequals \hat{\delta}^A_C\hat{\delta}^B_DH^{CD}$,
but note that in this particular case $\hat{H}^{AB}=H^{AB}$ as
$I_CH^{CB}=0$. Observe that, along $\Sigma$, $X^A$ is in $I^{\perp}$
so also $\hat{X}^A=X^A$. Then we have a bundle $\mathcal{E}^{\Sigma}_A$,
where $\mathcal{E}^{\Sigma}_A := i^*(I_{\perp}) \cong
i^*(\overline{I}^{\perp})$ and $i^*$ denotes the pullback along the
inclusion $i:\Sigma \into M$ on $\Sigma$, and this evidently has the
following composition series,
\[
0 \rightarrow  \mathcal{E}^{\Sigma}_a(1) \xrightarrow{\hat{Z}^a_A}  \mathcal{E}^{\Sigma}_A \cong i^*(I_{\perp}) \xrightarrow{X^A}  \mathcal{E}^{\Sigma}(1) \rightarrow 0.
\]
where $\mathcal{E}^\Sigma_a$ is a notation for $T^*\Sigma$.  Observe
that $\hat{H}^{AB}|_{I_{\perp}}$ is nondegenerate hence invertible,
because $H^{AB}I_B=0$ and $H^{AB}$ has rank $n$. Denote the image of
the inclusion of its pointwise inverse into
$\mathcal{E}_{(AB)}|_\Sigma$ by $\hat{H}_{AB}$. Let $\hat{\tau}
\colonequals X^AX^B\hat{H}_{AB}$. 
Then $\mathcal{Z}(\hat{\tau})$ is the set of points in
$\Sigma$ for which $X$ is null with respect to $\hat{H}$.
We will show that there exists a non-zero  tangential vector field
$\xi^c \in \Gamma(T\Sigma)$ such that $\xi^c\nabla_c\hat{\tau}$ is
nonzero on $\mathcal{Z}(\hat{\tau})$ whence we will conclude that
$\mathcal{Z}(\hat{\tau})$ is a smoothly embedded hypersurface in
$\Sigma$. In the following we calculate along $\Sigma$. Now for any
$\xi^c \in \Gamma(T\Sigma)$ we have
\begin{align*}
\xi^c\nabla_c \hat{\tau} & = \xi^c\nabla_c(X^AX^B\hat{H}_{AB})\\
& = X^AX^B\xi^c\nabla_c\hat{H}_{AB} + 2X^A\hat{H}_{AB}\xi^c\nabla_cX^B\\
& = X^AX^B\xi^c\nabla_c\hat{H}_{AB} + 2X^A\hat{H}_{AB}\xi^c\hat{W}_c^B.\\
\end{align*}
Observe that, at any point $x \in \Sigma_0:=\mathcal{Z}(\hat\tau)$,
there exists a $\xi^c$ such that $2X^A\hat{H}_{AB}\xi^c\hat{W}_c^B\neq
0$ when $\hat{\tau}=0$. Otherwise $\hat{H}_{AB}$ would be degenerate,
a contradiction. Thus it suffices to show that
$X^AX^B\xi^c\nabla_c\hat{H}_{AB} =0$ on
$\mathcal{Z}(\hat{\tau})$. Noting that
$\hat{\delta}^C_A=\hat{H}_{AB}H^{BC}$ we see that
\begin{align*}
0 & = \xi^c\nabla_c{\delta}^C_A \\
& = \xi^c\nabla_c\hat{\delta}^C_A + \xi^c\nabla_c(I_A\overline{I}^C)  \\
& = \hat{H}_{AB}\xi^c\nabla_c H^{BC} + H^{BC}\xi^c\nabla_c\hat{H}_{AB} + \overline{I}^C\xi^c\nabla_cI_A + I_A\xi^c\nabla_c\overline{I}^C.
\end{align*}

Solving for
$\xi^c\nabla_c\hat{H}_{AD}$ and applying the ``Eastwood-Matveev formula'' \eqref{EMconn}
gives us
\begin{align*}
\xi^c\nabla_c\hat{H}_{AD} & = - \hat{H}_{CD}\hat{H}_{AB}\xi^c\nabla_cH^{BC} - \hat{H}_{CD}\overline{I}^C\xi^c\nabla_cI_A - \hat{H}_{CD}I_A\xi^c\nabla_c\overline{I}^C\\
& = \frac{2}{n}\hat{H}_{CD}\hat{H}_{AB}\xi^cX^{(B}W\indices{_{cE}^{C)}_F}H^{EF} -
\hat{H}_{CD}\overline{I}^C\xi^c\nabla_cI_A  - \hat{H}_{CD}I_A\xi^c\nabla_c\overline{I}^C. \\
\end{align*} 
Note that, by the proof of Theorem \ref{4.11}, $I_A=D_A\si$, hence on $\Sigma$ it follows immediately that $X^A\nabla_cI_A=0$ and $X^AI_A=0$. Thus contracting in $X^AX^D$ with $\xi^c\nabla_c\hat{H}_{AD}$ gives
\begin{align*}
X^AX^D\xi^c\nabla_c\hat{H}_{AD} & = \frac{2}{n}X^AX^D\hat{H}_{CD}\hat{H}_{AB}\xi^cX^{(B}W\indices{_{cE}^{C)}_F}H^{EF} \\
& = \frac{2}{n}\hat{\tau}\xi^cX^{D}\hat{H}_{CD}W\indices{_{cE}^{C}_F}H^{EF}.
\end{align*}
So $X^AX^D\xi^c\nabla_c\hat{H}_{AD} =0$ along $\mathcal{Z}(\hat{\tau})$. We conclude that $\nabla_c\hat{\tau} \neq 0$ on $\mathcal{Z}(\hat{\tau})$ implying that $\mathcal{Z}(\hat{\tau})$ is a smoothly embedded separating hypersurface of $\Sigma$ with the weight 2 density $\hat\tau$ as a defining density for it. The Proposition thus follows.
\end{proof}

Recall that $\si$ is given locally as the square root of $\bdet(\ze)$ or $-\bdet(\ze)$ and that $I_A=D_A\si$. In the following technical lemma we show that the free contravariant index in $Z_E^e\Omega\indices{_{ce}^{B}_{F}}H^{EF}$ is ``tangential" along the zero locus of $\sigma$ in the sense that it is in the kernel of $I_B$. 

We will shortly need the following technical result.
\begin{lemma} \label{4.13}
Let $(M,\bp)$ be a connected, projective 
manifold equipped with a solution $\ze$ of the metrizablility 
equation such that $H^{AB}\colonequals L(\ze)$ has signature $(p,q,1)$ and $\varnothing \subsetneq \mathcal{D}(\ze) \subsetneq M$. Then, locally, along the zero locus of $\sigma$, $I_BZ_E^e\Omega\indices{_{ce}^{B}_{F}}H^{EF}=0$.
\end{lemma}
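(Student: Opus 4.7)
The plan is to differentiate the identity $H^{AB}I_B=0$ from Proposition~\ref{4.10}(i) using the Eastwood--Matveev prolongation equation~\eqref{EMconn}, then contract with $I_A$ and exploit that $X^AI_A=\sigma$ to force the desired vanishing. The main subtlety is correctly tracking the symmetrisation in \eqref{EMconn}; once that is in hand, only a Leibniz calculation and a density argument remain, so no genuine obstacle is anticipated.

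Concretely, abbreviate $T^B_c \colonequals W\indices{_{cE}^{B}_{F}}H^{EF}$, so that \eqref{EMconn} reads $\nabla_c H^{AB} = -\tfrac{1}{n}\bigl(X^A T^B_c + X^B T^A_c\bigr)$. Applying $\nabla_c$ to $H^{AB}I_B=0$ and substituting this expression, I obtain
\[
H^{AB}\nabla_c I_B \;=\; \tfrac{1}{n}\bigl(X^A(I_BT^B_c) + (X^BI_B)\,T^A_c\bigr).
\]
Since $I_A = D_A\sigma$ with $\sigma$ of weight $1$, the standard Thomas $D$-operator formula together with $X^AY_A=1$ and $X^AZ_A^a=0$ gives $X^BI_B = \sigma$, so the right hand side simplifies to $\tfrac{1}{n}\bigl(X^A(I_BT^B_c) + \sigma\,T^A_c\bigr)$. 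Contracting both sides with $I_A$ kills the left hand side because $I_AH^{AB}=0$, and after relabelling dummy indices the right hand side collapses to $\tfrac{2\sigma}{n}\,I_BT^B_c$. Hence
\[
\sigma\,I_BT^B_c \;=\; 0
\]
as a pointwise identity on the local set where $I_A$ is defined.

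To conclude, Theorem~\ref{4.11} establishes that $\mathcal{Z}(\sigma)$ is a smoothly embedded hypersurface, so $M\setminus\mathcal{Z}(\sigma)$ is open and dense. On that open dense set $\sigma \neq 0$, and the displayed identity forces $I_BT^B_c = 0$ there; by continuity the equation $I_BT^B_c = 0$ then extends to all of $M$, in particular to the zero locus $\mathcal{Z}(\sigma)$, which is the claim of the lemma.
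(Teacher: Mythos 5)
Your proof is correct, but it takes a genuinely different and more direct route than the paper's. The paper derives the identity through the signed adjugate: it computes $X^{B}\nabla_c\mathcal{H}_{AB}$ both from the defining formula \eqref{tradj} together with \eqref{EMconn}, and from $\mathcal{H}_{AB}=I_AI_B$ with $X^B\nabla_cI_B=0$; this descends to $\operatorname{adj}(\zeta)_{ab}W\indices{_{ce}^{b}_{f}}\zeta^{ef}=0$ along $\Sigma$, and then Proposition \ref{4.12} (rank of $\zeta$ equal to $n-1$ on the dense subset $\Sigma\setminus\Sigma_0$, so $\operatorname{adj}(\zeta)_{ab}=\beta_a\beta_b$ with $\beta$ proportional to $\nabla\sigma$) gives the claim on $\Sigma\setminus\Sigma_0$, extended to $\Sigma$ by continuity. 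You instead differentiate $H^{AB}I_B=0$ (Proposition \ref{4.10}) directly, substitute \eqref{EMconn}, and contract with $I_A$, obtaining the pointwise identity $\sigma\, I_BW\indices{_{cE}^{B}_{F}}H^{EF}=0$ on the whole local neighbourhood where $I$ is defined; since under the lemma's hypotheses Theorem \ref{4.11} makes $\mathcal{Z}(\sigma)=\mathcal{D}(\zeta)$ an embedded hypersurface, its complement is open and dense, so dividing by $\sigma$ there and using continuity yields $I_BZ^e_E\Omega\indices{_{ce}^{B}_{F}}H^{EF}=0$ everywhere locally, in particular along $\mathcal{Z}(\sigma)$. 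This is in fact slightly stronger than the lemma (away from $\Sigma$ the quantity is, in the scale $\nabla^g$, essentially the metric trace of the Cotton tensor, which vanishes precisely because $g_\pm$ is scalar-flat, so the stronger statement is consistent), and it avoids both the $\epsilon$-tensor/adjugate computation and any appeal to Proposition \ref{4.12}. One small presentational point: $X^BI_B=\sigma$ is simply the definition of $\sigma$ in Proposition \ref{4.10}; justifying it via $I_A=D_A\sigma$ reverses the paper's logical order (that identity is only established in Theorem \ref{4.11}), though since you invoke Theorem \ref{4.11} anyway for the hypersurface property, no circularity results.
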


\begin{proof}
Denote the signed adjugate of $H$ by $\mathcal{H}$. Given a point $x
\in M$ there exists an open set $U \subseteq M$ containing $x$ such
that $\mathcal{H}_{AB}=fI_AI_B$.  As in Proposition \ref{4.12} we absorb $f$
into $I$. We will first show that
$I_BZ_E^e\Omega\indices{_{ce}^{B}_{F}}H^{EF}=0$ on the set $U
\cap(\Sigma\backslash\Sigma_0)$. Along $\Sigma=\mathcal{D}(\ze)=\mathcal{Z}(\si)$ we
have that $\zeta^{ab}\nabla_{a}\sigma=0$. This follows at once from the fact
that $H^{AB}I_{B}=0$. But $\nabla_{a}\sigma$ is a (weighted) conormal to the
hypersurface, so $\zeta^{ab}$ is tangential along the hypersurface in
the sense that $\ze^{ab} \in \Gamma (S^2T\Sigma) \subset
\Gamma(S^2TM|_{\Sigma})$. Now we know the following:

\begin{align*}
X^{B_0} & \nabla_{c}\mathcal{H}_{A_0B_0}   = nX^{B_0}\epsilon^2_{A_0\cdots A_nB_0\cdots B_n}H^{A_1B_1}\cdots H^{A_{n-1}B_{n-1}}(\nabla_{c}H^{A_{n}B_{n}})  \\
& = -2X^{B_0}\epsilon^2_{A_0\cdots A_nB_0\cdots B_n}H^{A_1B_1}\cdots H^{A_{n-1}B_{n-1}}(X^{(A_{n}}W\indices{_{cE}^{B_{n})}_{F}}H^{EF})  \\
& = -X^{B_0}\epsilon^2_{A_0\cdots A_nB_0\cdots B_n}H^{A_1B_1}\cdots H^{A_{n-1}B_{n-1}}(X^{A_{n}}W\indices{_{cE}^{B_{n}}_{F}}H^{EF})  \\
& = (-1)^{n+1}\epsilon^2_{a_0\cdots a_{n-1}b_1\cdots b_{n}}Z^{a_0}_{A_0}\cdots Z^{a_{n-1}}_{A_{n-1}}Z^{b_1}_{B_1}\cdots Z^{b_n}_{B_{n}}H^{A_1B_1}\cdots H^{A_{n-1}B_{n-1}} W\indices{_{cE}^{B_{n}}_{F}}H^{EF}  \\
& = (-1)^{n+1}\epsilon^2_{a_0\cdots a_{n-1}b_1\cdots b_{n}}Z^{a_0}_{A_0}\cdots Z^{a_{n-1}}_{A_{n-1}}Z^{b_1}_{B_1}\cdots Z^{b_n}_{B_{n}}H^{A_1B_1}\cdots H^{A_{n-1}B_{n-1}} W^{B_n}_{h}W\indices{_{ce}^{h}_{f}}\ze^{ef}.
\end{align*}
Along $\Sigma$, this is equal to zero since, locally,

\begin{align*}
X^{B}\nabla_{c}\mathcal{H}_{AB}  & = X^{B}\nabla_{c}I_{A}I_{B} =  \sigma\nabla_{c}I_{A} + X^{B}I_{A}\nabla_{c}I_{B} = 0.
\end{align*} 
The last equality follows since $X^B\nabla_cI_B=0$, as established in the Proof of Theorem \ref{4.11}. Thus along $\Sigma$,  

\begin{align*}
0 & = \epsilon^2_{a_0\cdots a_nb_1\cdots b_n}\zeta^{a_1b_1}\cdots \zeta^{a_{n-1}b_{n-1}} W\indices{_{ce}^{b_{n}}_{f}}\zeta^{ef} \\
& = \pm \operatorname{adj}(\zeta)_{a_0b_n}W\indices{_{ce}^{b_{n}}_{f}}\zeta^{ef}, \\
\end{align*}
where we have used that $\epsilon^2_{a_0\cdots a_nb_1\cdots b_n}\zeta^{a_1b_1}\cdots \zeta^{a_{n-1}b_{n-1}}$ is the adjugate of $\zeta$, up to sign. But $\zeta$ has rank $n-1$ on the set $\Sigma \backslash \Sigma_0$, whence its adjugate is rank 1 and simple. So, locally,
 \begin{align*}
0 & =  \beta_{a}\beta_{b}W\indices{_{ce}^{b}_{f}}\zeta^{ef} 
\end{align*} 
for some nonvanishing 1-form $\beta$. It is clear that $\beta_{a}\zeta^{ab}=0$. Since $\zeta^{ab}$ has a rank $1$ kernel on $\Sigma \backslash \Sigma_0$ which contains both $\nabla_{a}\sigma$ and $\beta_{a}$, they must be proportional. Thus,
\begin{align*}
0 = (\nabla_{b}\sigma) W\indices{_{ce}^{b}_{f}} \zeta^{ef}.  
\end{align*} 
On $\Sigma$ this is equivalent to 
\begin{align*}
0 = I_{B}Z^e_E\Omega\indices{_{ce}^{B}_{F}} H^{EF}.  
\end{align*}
By continuity it follows that $I_BZ_E^e\Omega\indices{_{ce}^{B}_{F}}H^{EF}=0$ on the closure of this set, namely $\Sigma = \operatorname{cl}(\Sigma\backslash\Sigma_0)$. \\
\end{proof}

\begin{theorem} \label{4.14}
Let $(M,\bp)$ be a connected, projective 
manifold equipped with a solution $\ze$ of the metrizablility 
equation such that $L(\ze)$ has signature $(p,q,1)$ and $\varnothing \subsetneq \mathcal{D}(\ze) \subsetneq M$. Then  we have the partition of $M$ given by Theorem \ref{4.11} and  the following hold: 

$(i)$ $M_0 \colonequals \mathcal{D}(\ze)$ is totally geodesic and inherits a projective structure $\hat{\bp}$ whose projective densities agree with the restriction of the ambient projective densities to $M_0$.  

$(ii)$ $(M_\pm, g_\pm)$ are each scalar-flat, pseudo-Riemannian
manifolds with metric $g_{\pm}$ of the same signature as $\zeta$,
where $g_{\pm}^{ab} = \operatorname{sgn}(\tau)\tau
\zeta^{ab}|_{M_{\pm}}$ and $\tau\colonequals \text{\normalfont
  \bdet}(\ze^{ab})$. Moreover $\nabla^{g_\pm}\in\bp$, and if $M$ is
closed then $(M\backslash M_{\mp},\bp)$ is an order 1 projective
compactification of $(M_{\pm},\nabla^g)$ with boundary $M_0$.

$(iii)$ $(M_0,\hat{\bp})$ inherits a solution $\hat{\zeta}$ of the metrizability equation and $\Sigma \colonequals M_0$ decomposes according to the strict sign of the determinant, $\overline{\tau}$, of this solution such that $(\Sigma_{\pm},\hat{g}_{\pm})$ are pseudo-Riemannian for the metric $\hat{g}^{ab}_\pm \colonequals \operatorname{sgn}({\overline{\tau}})\overline{\tau}\hat{\ze}^{ab}$. Furthermore, $\nabla^{\hat{g_\pm}}\in \hat{\bp}$ and $(\Sigma \backslash \Sigma_{\mp}, \hat{\bp})$ is an order 2 projective compactification of $(\Sigma_{\pm},\nabla^{\hat{g}_\pm})$ with boundary $\Sigma_0$. Finally $\Sigma_0\colonequals \mathcal{D}(\hat{\ze})$, if nonempty, inherits a conformal structure of signature $(p-1,q-1)$.
%\edz{RS: Here $\overline{\tau}$ is det of $\hat{\zeta}$}
 \end{theorem}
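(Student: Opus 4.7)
My plan is to derive part (ii) first, from the standard metrizability dictionary together with the projective-compactness characterisation already used in the paper, then deduce total geodesicity in (i) from Lemma \ref{4.13}, and finally reduce (iii) to an application of Theorem \ref{4.6} on the induced projective manifold $(\Sigma,\hat{\bp})$.

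For (ii), away from $\mathcal{D}(\zeta)$ the Mikes--Sinjukov correspondence yields $g_\pm^{ab}=\operatorname{sgn}(\tau)\tau\zeta^{ab}|_{M_\pm}$ with $\nabla^{g_\pm}\in\bp$, and Proposition \ref{4.9} shows $g_\pm$ is scalar-flat. By Theorem \ref{4.11} the density $\sigma\in\Gamma(\mathcal{E}(1))$, which locally squares to $\pm\bdet(\zeta)$, is a defining density for $M_0$ of \emph{weight one}, and $\nabla^{g_\pm}$ preserves $\sigma$ since it preserves $\sigma^2=\pm\tau$. Thus $\nabla^\sigma=\nabla^{g_\pm}$ on $M_\pm$, and Proposition \ref{2.2} with $\alpha=1$ yields at once that $(M\setminus M_\mp,\bp)$ is the order-$1$ projective compactification of $(M_\pm,\nabla^{g_\pm})$ with boundary $M_0$, proving (ii).

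For (i), I would use the criterion of Proposition 3.1 of \cite{CG3} (as invoked in the proof of Proposition \ref{4.8}): a hypersurface defined by an order-$1$ projectively compactifying density is totally geodesic precisely when the smooth extension of $\sigma P_{ab}$ to the boundary vanishes there. Working in a local splitting and projecting the Eastwood--Matveev prolongation \eqref{EMconn} onto the $Z^a_AZ^b_B$-slot gives an expression for $\nabla_c\zeta^{ab}$ whose trace, combined with the tangentiality of $Z^e_E\Omega\indices{_{ce}^B_F}H^{EF}$ against $I_B$ furnished by Lemma \ref{4.13}, is exactly what is needed to force $\sigma P_{ab}|_{M_0}=0$. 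Since $M_0$ is then totally geodesic, $\bp$ restricts to a projective class $\hat{\bp}$ on $M_0$, and the weighted density bundles of $(M_0,\hat{\bp})$ are canonically the restrictions of those of $(M,\bp)$ (the identification arising from the fact that $\sigma^2$ and $\hat{\tau}$ provide compatible trivialisations on either side).

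For (iii), the tangentiality $\zeta^{ab}\nabla_a\sigma=0$ (a by-product of $H^{AB}I_B=0$, noted in Lemma \ref{4.13}) lets us define $\hat{\zeta}\colonequals \zeta|_\Sigma\in\Gamma(S^2T\Sigma(-2))$. Using $\nabla^{g_\pm}\hat{\zeta}=0$ on each $M_\pm$ together with total geodesicity, the intrinsic covariant derivative on $\Sigma$ satisfies the same trace-free condition, so $\hat{\zeta}$ solves the metrizability equation on $(\Sigma,\hat{\bp})$. The key identification is then that the quotient bilinear form $\hat H^{AB}$ of Proposition \ref{4.12}, viewed via $\mathcal{E}^\Sigma_A\cong i^*(I_\perp)$ as a symmetric bilinear form on the standard tractor bundle of $(\Sigma,\hat{\bp})$, coincides with $L(\hat{\zeta})$; this follows by matching slots, using Corollary \ref{3.3} and the Gauss-type relation between the Schouten tensors of $\bp$ and $\hat{\bp}$ along a totally geodesic hypersurface (which in turn relies on $\sigma P_{ab}|_{M_0}=0$ from (i)). Since $\hat H^{AB}$ is everywhere nondegenerate of signature $(p,q)$ by the hypothesis on $L(\zeta)$, Theorem \ref{4.6} applied to $(\Sigma,\hat{\bp},\hat{\zeta})$ delivers the stratification of $\Sigma$ according to the sign of $\overline{\tau}=\bdet(\hat{\zeta})$, the metrics $\hat g_\pm=\operatorname{sgn}(\overline{\tau})\overline{\tau}\hat{\zeta}|_{\Sigma_\pm}$ with $\nabla^{\hat g_\pm}\in\hat{\bp}$, the order-$2$ projective compactification of $(\Sigma_\pm,\nabla^{\hat g_\pm})$ with boundary $\Sigma_0$, and the induced conformal structure of signature $(p-1,q-1)$ on $\Sigma_0=\mathcal{D}(\hat{\zeta})$.

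The main obstacle is the identification of $\hat H^{AB}$ with $L(\hat{\zeta})$ in the intrinsic tractor calculus of $(\Sigma,\hat{\bp})$: the middle and bottom slots of $\hat H^{AB}$ must be reconciled with the prescribed derivatives of $\hat{\zeta}$ in Corollary \ref{3.3}, and this requires careful use of the extrinsic-intrinsic Schouten relation across $M_0$ together with the prolongation identity \eqref{EMconn}. Once this identification is secured, Theorem \ref{4.6} discharges the remainder of (iii).
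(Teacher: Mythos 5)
Your part (ii) is fine and coincides with the paper's argument ($\sigma$ is a weight-one defining density preserved by $\nabla^{g_\pm}$, so Proposition \ref{2.2} applies). The problems are in (i) and (iii). For (i), your route via the criterion ``$M_0$ is totally geodesic iff the extension of $\sigma P_{ab}$ vanishes on $M_0$'' leaves the decisive verification unproved, and the mechanism you indicate cannot supply it: the $Z^a_AZ^b_B$-slot of \eqref{EMconn} is just $\nabla_c\ze^{ab}+2\delta_c^{(a}\lambda^{b)}=0$, which contains no Schouten term, so neither it nor its trace can force $\sigma P_{ab}|_{M_0}=0$. The Schouten enters only through the middle slot, which in the scale $\nabla^{g}$ on $M_\pm$ reads $P_{cb}\ze^{ab}=\tfrac1n W\indices{_{cd}^{a}_{e}}\ze^{de}$; to convert this into boundary behaviour of $\sigma P_{ab}$ you must lower an index with $g$, which degenerates along $M_0$, so you would need a \emph{rate} of vanishing of $(\nabla_b\sigma)W\indices{_{ce}^{b}_{f}}\ze^{ef}$, whereas Lemma \ref{4.13} only gives its vanishing on $\Sigma$. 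The paper avoids this entirely: it proves total geodesicity by showing $\xi^c\nabla^{\mathcal{T}}_cI_A\propto I_A$ for tangential $\xi$, using $H^{AB}I_B=0$, \eqref{EMconn}, $X^AI_A=\sigma=0$ on $\Sigma$, and Lemma \ref{4.13}, and then reading off the slots to get $\xi^c\nabla_c\nabla_a\sigma\propto\nabla_a\sigma$. (Your density identification is also too quick: $\sigma$ vanishes on $M_0$, so it cannot ``trivialise'' anything there; the paper identifies $\mathcal{E}(-1)|_{M_0}$ with $\mathcal{E}^{\Sigma}(-1)$ via the weighted conormal $n_a=\nabla_a\sigma$ trivialising $\mathcal{N}^*(1)$.)

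For (iii), you reduce everything to the identification of the quotient form $\hat H^{AB}$ with the \emph{intrinsic} splitting operator $L(\hat\ze)$ of $(\Sigma,\hat{\bp})$, so that Theorem \ref{4.6} can be cited, and you explicitly leave that identification as ``the main obstacle''. That is the whole content of your reduction: it requires showing that the connection induced on $i^*(I^{\perp})$ by the ambient tractor connection is the \emph{normal} tractor connection of $(\Sigma,\hat{\bp})$, equivalently a projective Gauss-type relation between ambient and intrinsic Schouten tensors and splittings along the totally geodesic $M_0$; none of this is established here or elsewhere in the paper, so as written the proposal does not prove (iii). The paper never makes this identification: it shows directly that $\lambda^a$ and $\xi^c\nabla_c\ze^{ab}$ are tangential along $\Sigma$ (using $n_a\ze^{ab}=0$ and $\xi^c\nabla_cn_a\propto n_a$), hence $\hat\ze$ solves the intrinsic metrizability equation; it checks that $\overline{\tau}=\bdet(\hat\ze)$ agrees with $\hat\tau=X^AX^B\hat H_{AB}$ up to a nonvanishing factor and invokes Proposition \ref{4.12} (proved by ambient methods) for $\Sigma_0$ being a smooth hypersurface with defining density $\overline{\tau}$; order-two compactification then follows from Proposition \ref{2.2}, and the conformal structure on $\Sigma_0$ from a kernel argument as in Theorem \ref{4.6}(iii). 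Note also that your justification that $\hat\ze$ solves the intrinsic equation (``$\nabla^{g_\pm}\hat\ze=0$ on $M_\pm$ together with total geodesicity'') is not quite meaningful, since $\nabla^{g_\pm}$ does not extend to $\Sigma$; the correct argument restricts the ambient prolonged equation tangentially along $\Sigma$ as above.
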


\begin{proof}
  $(i)$ We will show that, locally along the hypersurface $\Sigma \colonequals \mathcal{Z}(\sigma)$, $\xi^c\nabla^{\mathcal{T}}_cI_A \propto I_A$ for tangential vector fields $\xi^c \in \Gamma(T\Sigma)$. Then, looking at the slots of $\xi^c\nabla^{\mathcal{T}}_cI_A$ and $I_A$, it follows that $\nabla_a \sigma \propto \xi^c\nabla_c\nabla_a \sigma$. This shows $\Sigma$ is totally geodesic and so inherits a projective structure $(\Sigma, \hat{\bp}=\bp |_{{T\Sigma}})$. We work locally with notation as above.
  Since $H^{AB}\colonequals L(\ze^{ab})$ has rank $n$  and $H^{AB}I_B=0$, we need only show that $H^{AB}\xi^c\nabla^{\mathcal{T}}_cI_B=0$. We know that 
\[
0 = \nabla^{\mathcal{T}}_c(H^{AB}I_B) = \nabla^{\mathcal{T}}_c(H^{AB})I_B + H^{AB}\nabla^{\mathcal{T}}_cI_B,
\] 
so it is sufficient to show that $(\xi^c\nabla^{\mathcal{T}}_cH^{AB})I_B=0$ for all $\xi^c \in \Gamma(T\Sigma)$. By Lemma \ref{4.13} we know that $I_{B}Z^e_E\Omega\indices{_{ce}^{B}_{F}} H^{EF}$ vanishes on $\Sigma$. We also know that $X^BI_B=\sigma$ vanishes on $\Sigma$. Thus, along $\Sigma$, we have,
\begin{align*}
0 & = -\frac{2}{n}\xi^c X^{(A}\Omega\indices{_{cd}^{B)}_{E}} H^{DE}Z^d_DI_B = \xi^c(\nabla^{\mathcal{T}}_cH^{AB})I_B,
\end{align*}
as required.

Next we show that the projective densities on $\Sigma$, of a given weight, agree with the densities of the same weight on $M$ restricted to $\Sigma$ (cf. Proposition 12 of \cite{CG2}). The canonical conormal bundle $\mathcal{N}^* \subset T^*M |_{\Sigma}$ is defined as the annihilator of $T\Sigma$. Then for the density bundles we have that $\mathcal{E}(-n-1)|_{\Sigma} = \Lambda ^{n} T^*M |_{\Sigma} \cong \mathcal{N}^* \otimes \Lambda^{n-1} T^*\Sigma = \mathcal{N}^* \otimes \mathcal{E}^{\Sigma}(-n)$, where where bundles on $(\Sigma, \bp |_{T\Sigma})$ are denoted by a superscript (or subscript when convenient) $\Sigma$. 
%Since the top slot of $I_A$ vanishes on $\Sigma$, its bottom slot is independent of choice. 

Defining $n_a\colonequals \nabla_a\sigma$ gives a nowhere vanishing section of $\mathcal{N}^*(1) \cong \mathcal{E}(-n)|_{\Sigma} \otimes \mathcal{E}^{\Sigma}(n)$. This induces a canonical isomorphism $\mathcal{E}(-n)|_{\Sigma} \cong \mathcal{E}^{\Sigma}(-n)$, and hence $\mathcal{E}(-1)|_{\Sigma} \cong \mathcal{E}^{\Sigma}(-1)$. Although $I_A$ only exists locally for non-orientable $\Sigma$, densities are independent of the orientability of their base manifold and the above isomorphism of density bundles extends globally.

$(ii)$ Scalar flatness of the inherited metrics $g^{ab}_{\pm}=\operatorname{sgn}(\tau)\tau\ze^{ab}|_{\pm}$ on the open components follows from the fact that the scalar curvature is proportional to $\operatorname{det}({H^{AB}})$. That $g$ and $\ze$ have the same signature is clear. Since $\tau=\si^2$ it follows that $\nabla^g$ preserves $\sigma$. Of course $\nabla^{g_\pm}\in \bp$, so 
Then projective compactification of order 1 follows from the fact that $\sigma$ is a $1$-density that is a defining density for $M_0$ (as shown in Theorem \ref{4.11}). 

$(iii)$  Note that $H^{AB}I_B=0$ implies that $H^{AB}$ is nondegenerate on the quotient bundle $\mathcal{E}^{\Sigma}_A$ of $i^*\mathcal{E}_A$. Thus $L(\zeta)$ defines a nondegenerate bundle metric on $\mathcal{E}^{\Sigma}_A$. $H^{AB}I_B=0$ also implies that $\zeta^{ab}\nabla_a \sigma =0$ and $(\nabla_b \zeta^{ab})(\nabla_a \sigma)=0$ on $\Sigma$, since $\sigma=0$ along $\Sigma$. But since $\zeta$ has rank $1$ kernel on a dense open subset on $\Sigma$ and $\nabla_a \sigma$ is normal to $\Sigma$, it follows that $\zeta$ is nondegenerate on $T\Sigma$ on a dense open subset of $\Sigma$. 

Let $\hat{\nabla} \colonequals \nabla|_{T\Sigma}$ denote the
connection induced on $T\Sigma$ along the inclusion $i:\Sigma \into
M$, since $\Sigma$ is totally geodesic. Since
$(\nabla_b\si)\ze^{ab}=0$ and $\nabla\si$ is a conormal field, we let
$\hat{\ze}\in S^2T\Sigma$ denote the symmetric bilinear tensor along
$\Sigma$ given by the restriction of $\ze^{ab}$. We will show that $\hat{\ze}^{ab}$ is
a solution to the metrizability equation, that is
\[
\hat{\nabla}_c\hat{\ze}^{ab} = \frac{1}{n}\hat{\delta}_c^{a}\hat{\nabla}_d\hat{\ze}^{db} + \frac{1}{n}\hat{\delta}_c^{b}\hat{\nabla}_d\hat{\ze}^{ad},
\]
along $\Sigma$.
Let $n_a \colonequals \nabla_a \sigma$. We know that $n_a\ze^{ab}=0$ along $\Sigma$. We also know that, for tangential vector fields $\xi^c \in \Gamma(\mathcal{E}_{\Sigma}^c)$ that $\xi^c\nabla_cn_a \propto n_a$. So, along $\Sigma$, recalling the notation from (11),
\begin{align*}
0 & = \xi^c\nabla_c(n_a\ze^{ab}) = \ze^{ab}\xi^c\nabla_cn_a + n_a\xi^c\nabla_c\ze^{ab} = n_a\xi^c\nabla_c\ze^{ab} =-2n_a\xi^c\delta_c^{(a}\lambda^{b)}=-n_a\xi^{b}\lambda^{a}.
\end{align*}
So we see that $\xi^c\nabla_c\ze^{ab}$ and $\lambda^a$ are tangential along the hypersurface. 
%Contracting  with $n_a$ and $\xi^c$ with the metrizability equation we have,
%\begin{align*}
%0 & = n_a\xi^c\nabla_c\ze^{ab} - \frac{1}{n+1}n_a\xi^c\delta_c^{a}\nabla_d\ze^{db} - \frac{1}{n+1}n_a\xi^c \delta_c^{b}\nabla_d\ze^{ad} \\
%& = -\frac{1}{n+1}n_a\xi^c \delta_c^{a}\nabla_d\ze^{db},
%\end{align*}
%whence $\delta^a_c$ is tangential along $\Sigma$. 
Recall from $(9)$ that the metrizability equation is given by $\nabla_c\ze^{ab} = -2\delta_c^{(a}\lambda^{b)}$. Thus along the hypersurface we have $\xi^c\hat{\nabla}_c\hat{\ze}^{ab} = -2\xi^{(a}\hat{\lambda}^{b)}$, where $\hat{\lambda}^a = -\frac{1}{n}\hat{\nabla}_c\hat{\ze}^{ac}$. 

Then, along $\Sigma$, $\hat\ze$ induces a pseudo-Riemannian structure
$\hat{g}_\pm$, with Levi-Civita connection $\nabla^{g_\pm}\in \bp$, on
$\Sigma \backslash \mathcal{D}(\hat{\zeta})$ by $\hat{g}^{ab}=
\operatorname{sgn}(\overline{\tau})\overline{\tau}\hat{\ze}^{ab}$. By
a similar argument to Lemma \ref{4.4} we have that $\overline{\tau}
\colonequals \bdet(\hat{\ze}) $ and $\hat{\tau}=X^AX^BH_{AB}$ agree up
to multiplication by a nowhere zero function. Thus from Proposition
\ref{4.12} $\Sigma_0:=\mathcal{D}(\hat{\ze})$ is a smoothly
embedded hypersurface in $\Sigma$ with $\overline{\tau}$ a defining
density for it.
%%the solution
%%$\hat{\zeta^{ab}} = \hat{Z}^a_A\hat{Z}^b_B\hat{H}^{AB}$ of the
%%metrizability equation on $(\Sigma, \hat{\bp})$ induces a metric on an
%% open dense set in $\Sigma$ giving
Moreover we have 
a decomposition $\Sigma = \Sigma_+
\cup \Sigma_0 \cup \Sigma_-$, according to the strict sign of
$\overline{\tau} \colonequals \bdet(\hat{\ze})$, or equivalently, the
strict sign of $\hat{\tau} \colonequals X^AX^B\hat{H}_{AB}$.
%% % Since the defining density $\hat{\tau}$ of the hypersurface
%% % $\Sigma_0$ is a 2-density it follows from the general theory that
%% % it inherits a conformal structure $(\Sigma_0, \hat{\bc})$.  %% By
%% % construction, the metric $g^{ab}_{\pm} \colonequals
%% % \operatorname{sgn}(\overline{\tau})\overline{\tau}\hat{\ze}^{ab}$,
%% % defined on $\Sigma_{\pm}$, is parallel for the connection in
%% % $\hat{\bp}$ preserving $\hat{\ze}$ away from
%% % $\mathcal{D}(\hat{\ze})$. Since this connection is torsion-free
%% % it is in fact the the Levi-Civita connection for $g_{\pm}$.
Since $\nabla^{\hat{g}_\pm}$ evidently preserves $\overline{\tau}$ on
$\Sigma \setminus \mathcal{D}(\hat{\zeta})$ it follows that $(\Sigma
\backslash \Sigma_{\mp}, \hat{\bp})$ is an order 2 projective
compactification of $(\Sigma_{\pm},\nabla^g)$. An argument analogous
to the proof of part $(iii)$ of Theorem \ref{4.6} shows that the weighted
conormal to $\Sigma_0$ is given by
\begin{align*}
\nabla_a \overline{\tau} = 2\operatorname{adj}(\hat{\ze})_{ac}\hat{\nabla}_e\hat{\ze}^{ce},
\end{align*}
and so $\nabla_a\overline{\tau}$ lies in, and hence spans, the kernel
of $\hat{\ze}^{ab}$, at each point of $\Sigma_0$. So
$\hat{\ze}^{ab}\in \Gamma(S^2T\Sigma_0(-2))$ is nondegenerate on
$T^*\Sigma_0$, whence it gives a conformal metric of the given signature claimed. (It is
straightforward to show that $\mathcal{E}(-2)|_{\Sigma_0}= \mathcal{E}_\Sigma[-2]$
where $(\mathcal{E}[-2])^{n-2}=((\Lambda^{n-2}T^*\Sigma_0)^2)$ is the
usual oriented line bundle said to have {\em conformal weight}
$-2(n-2)$, cf. \cite{CG1}.)
\end{proof}

Although not mentioned in the introduction, we note here that the
tractor bundles fit together nicely.
\begin{corollary}\label{4.15}
Let $(M,\bp)$ a projective manifold satisfying the conditions of Theorem \ref{4.14} with  $\mathcal{E}^{\Sigma}(1)\colonequals \mathcal{E}(1)|_{\Sigma}$. Then the tractor bundle   $\mathcal{E}^{\Sigma}_A = J^1\mathcal{E}^{\Sigma}(1)$ on the hypersurface $M_0\colonequals \Sigma$ fits into the following commutative diagram, along $\Sigma$, where $i:\Sigma \hookrightarrow M$ denotes the inclusion and $\hat{\pi}$ denotes the obvious subbundle projection.  \\

\begin{center}
\begin{tikzcd}%
	 & 0 \arrow[d] & 0 \arrow[d] \\
	 0 \arrow[r] & \mathcal{E}_{\Sigma} \arrow[r,"\sim" ] \arrow[d,hookrightarrow,"\nabla_a\sigma"] & \mathcal{E}_{\Sigma}  \arrow[d,hookrightarrow,"\nabla_a\sigma"]   \\
	0 \arrow[r] & i^*\mathcal{E}_a(1) \arrow[r,hookrightarrow,"Z^a_A"] \arrow[d,twoheadrightarrow,"\hat{\pi}"] & i^*\mathcal{E}_A = J^1\mathcal{E}(1)|_{\Sigma} \arrow[r,twoheadrightarrow,"X^A"]  \arrow[d,twoheadrightarrow,"\hat{\Pi}"] & i^*\mathcal{E}(1) \arrow[r] \isoarrow{d}  & 0 \\  
	0 \arrow[r] & \mathcal{E}^{\Sigma}_a(1) \arrow[r,hookrightarrow,"\tilde{Z}^a_A"] \arrow[d] & \mathcal{E}^{\Sigma}_A = J^1\mathcal{E}^{\Sigma}(1) \cong i^*(I^{\perp})^* \arrow[r,twoheadrightarrow,"X^A"] \arrow[d] & \mathcal{E}^{\Sigma}(1) \arrow[r] \arrow[d] & 0 \\
	& 0  & 0  & 0  \\
\end{tikzcd}%
\end{center}
\end{corollary}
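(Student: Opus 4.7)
The plan is to verify the diagram by constructing each vertical map canonically and then checking commutativity and exactness of the resulting short exact sequences. First, I would invoke from the proof of Theorem~\ref{4.14}(i) the canonical isomorphism $i^*\mathcal{E}(w)\cong \mathcal{E}^\Sigma(w)$ of weight-$w$ density bundles, which in particular identifies $i^*\mathcal{E}(1)\cong \mathcal{E}^\Sigma(1)$ and supplies the rightmost vertical map.

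For the leftmost column, the key observation is that although $\nabla_a\sigma$ depends on the choice of $\nabla\in\bp$, the projective ambiguity is $\hat\nabla_a\sigma=\nabla_a\sigma+\Upsilon_a\sigma$ (since $\sigma$ has weight $1$), which vanishes along $\mathcal{Z}(\sigma)=\Sigma$. Hence $\nabla_a\sigma|_\Sigma$ is a well-defined nonvanishing section of $i^*\mathcal{E}_a(1)$; moreover it annihilates $T\Sigma$, so it lies in and trivialises the weighted conormal subbundle $\mathcal{N}^*(1)\subset i^*\mathcal{E}_a(1)$, giving $\mathcal{N}^*(1)\cong\mathcal{E}_\Sigma$. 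The conormal short exact sequence $0\to\mathcal{N}^*\to i^*T^*M\to T^*\Sigma\to 0$, tensored with $\mathcal{E}(1)|_\Sigma$ and using $\mathcal{E}(1)|_\Sigma\cong \mathcal{E}^\Sigma(1)$, then yields the left column, with $\hat\pi$ the resulting quotient onto $\mathcal{E}^\Sigma_a(1)$.

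The middle column is the heart of the argument. In any splitting, $I_A=D_A\sigma=Y_A\sigma+Z_A^a\nabla_a\sigma$, so along $\Sigma$ one has $I_A=Z_A^a\nabla_a\sigma$; hence the composite $\mathcal{E}_\Sigma\hookrightarrow i^*\mathcal{E}_a(1)\hookrightarrow i^*\mathcal{E}_A$, given by multiplication by $\nabla_a\sigma$ followed by $Z_A^a$, has image precisely the line subbundle $\langle I_A\rangle$. I would then define $\hat\Pi\colon i^*\mathcal{E}_A\twoheadrightarrow i^*\mathcal{E}_A/\langle I_A\rangle$ as the quotient; by linear duality the target is canonically $i^*(I^\perp)^*$. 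To identify it further with $\mathcal{E}^\Sigma_A=J^1\mathcal{E}^\Sigma(1)$, I would exhibit the canonical restriction-of-$1$-jets map $J^1\mathcal{E}(1)|_\Sigma\to J^1\mathcal{E}^\Sigma(1)$ sending the $1$-jet of a section $f$ of $\mathcal{E}(1)$ at $x\in\Sigma$ to the $1$-jet of its restriction to $\Sigma$: this is surjective because any section on $\Sigma$ admits a local extension to $M$, and its kernel consists of $1$-jets with $f(x)=0$ and $\nabla_a f(x)$ annihilating $T_x\Sigma$, i.e.\ precisely $\mathcal{N}^*(1)\subset i^*\mathcal{E}_a(1)\subset i^*\mathcal{E}_A$, which under $Z_A^a$ is $\langle I_A\rangle$.

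Commutativity of each square then reduces to the compatibility of the canonical injectors and projectors $Z_A^a,\tilde Z_A^a,X^A$ with the restriction map and the middle-column quotient, which holds by construction; exactness of each row is the standard $1$-jet exact sequence on $M$ or on $\Sigma$, and exactness of each column follows from the conormal sequence (left column) and from the definition of $\hat\Pi$ (middle column). The step requiring the most care is the canonical identification $i^*\mathcal{E}_A/\langle I_A\rangle\cong J^1\mathcal{E}^\Sigma(1)$: one must verify that the restriction-of-$1$-jets map is a bundle isomorphism onto $\mathcal{E}^\Sigma_A$ rather than merely an abstract rank-$n$ quotient, but once this is in place the remaining diagram chase is routine.
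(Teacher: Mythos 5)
Your proposal is correct and follows essentially the same route as the paper, whose entire stated proof is the observation that along $\Sigma$ the section $\nabla_a\sigma$ trivializes the weighted conormal bundle $\mathcal{N}^*(1)$ and hence identifies it with $\mathcal{E}_{\Sigma}$; your well-definedness remark (that the projective change $\Upsilon_a\sigma$ vanishes on $\mathcal{Z}(\sigma)$) and the identity $I_A=Z_A^a\nabla_a\sigma$ along $\Sigma$ are exactly the implicit content of that step. The additional detail you supply --- identifying $i^*\mathcal{E}_A/\langle I_A\rangle\cong i^*(I^{\perp})^*$ with $J^1\mathcal{E}^{\Sigma}(1)$ via the restriction-of-$1$-jets map, whose kernel is precisely the weighted conormal line --- is left implicit in the paper (which leans on the construction of $\mathcal{E}^{\Sigma}_A$ in Proposition \ref{4.12} and the density identification in Theorem \ref{4.14}), and is a sound way to make that identification precise.
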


\begin{proof}
%As a 1-jet of a density, a projective cotractor can be viewed as assigning to each point $x$ in $\mathcal{E}(1)$ an infinitesimal neighborhood $(\delta, \epsilon)$ in $(\mathbb{R}_+,M)$. $\hat{\Pi}:i^*\mathcal{E}_A \rightarrow \hat{\mathcal{E}}_A$ is defined by $\hat{\Pi}_x((\delta,\epsilon)) = (\delta, \epsilon \cap \Sigma)$. 

Along $\Sigma$ the section $\nabla_a\sigma$ trivializes the weighted conormal bundle $\mathcal{N}^*(1)$ whence we identify it with the trivial bundle $\mathcal{E}_{\Sigma}$.
\end{proof} 

%% Let $(M,\bp)$ be a projective manifold satisfying the conditions of
%% the Theorem 4.14, with $\sigma$ a defining density for $M_0$ regarded
%% as the boundary of the projectively compact component $M_{+}$
%% (respectively $M_{-}$), and let $\nabla \in \bp$ be the connection
%% preserving $\si$. Then the section $P_{ab}\si$ extends smoothly from
%% $M_{+}$ (respectively $M_{-}$) to $M_0$, its restriction to $M_0$ can
%% be naturally viewed as a second fundamental form for the boundary, and
%% $M_0$ is totally geodesic if and only if it vanishes identically by
%% Proposition 3.1 of \cite{CG1}. Hence the second fundamental form for
%% $M_0$ vanishes identically.

Theorem 5 of \cite{CG3} shows that if the interior of a manifold with
boundary is equipped with a pseudo-Riemannian metric whose Levi-Civita
connection does not extend to the boundary while its projective
structure does, and such that the generalized scalar curvature is
non-zero everywhere (on the manifold with boundary), then the metric is
projectively compact of order 2. Recall that the generalized scalar
curvature condition means that $\mathcal{R}(L(\ze^{ab}))=n+1$
everywhere.  We provide the analogue of that result for scalar flat
metrics.

\begin{corollary} \label{4.16}
Let $\overline{M}$ be an orientable, connected manifold with boundary $\partial M$ and interior $M$, equipped with a scalar-flat pseudo-Riemannian metric $g$ on $M$, such that its Levi-Civita connection $\nabla^{g}$ does not extend to any neighborhood of a boundary point, but the projective structure $\bp \colonequals [\nabla^g]$ does extend to the boundary. Let $\tau \colonequals \operatorname{vol}(g)^{-\frac{2}{n+2}}$. Then $\ze^{ab}\colonequals \tau^{-1}g^{ab}$ extends to the boundary. If $\mathcal{R}(L(\ze^{ab}))=n$ on $\overline{M}$, then $(M,g)$ is projectively compact of order $1$.
\end{corollary}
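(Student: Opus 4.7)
The plan is to reduce the corollary to an application of Theorem \ref{4.14} (or, for the compactness statement, to Proposition \ref{2.2}) applied to $(\overline{M}, \bp, \ze)$. For this I must establish: (a) that $\ze = \tau^{-1}g^{-1}$ satisfies the metrizability equation on $M$ and extends smoothly to $\overline{M}$; (b) that $\mathcal{D}(\ze) = \partial M$; and then the conclusion follows because the canonical weight-$1$ density $\sigma$ produced by Theorem \ref{4.11} turns out to be a defining density of weight $1$ for $\partial M$, with scale connection $\nabla^\sigma$ agreeing with $\nabla^g$ on $M$.

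For (a), the fact that $\ze$ solves the metrizability equation on $M$ is the classical Mikes--Sinjukov identification used throughout Section \ref{BGGsec}: $\nabla^g$ is a special connection in $\bp$ whose preserved weight-$2$ density is exactly $\tau$, so $L(\ze) = H^{AB}$ takes the explicit form given in Corollary \ref{3.3}. To extend $\ze$ across $\partial M$, I would pass to the projectively invariant prolonged system \eqref{EMconn} for $H^{AB}$: its coefficients are built from the tractor projective Weyl curvature, and so extend smoothly to $\overline{M}$ as soon as $\bp$ does. I expect this to be the main technical obstacle, since smooth extension of a bundle-valued solution of a first-order linear system from the interior to the boundary is not automatic from smooth extension of the coefficients. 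The argument must combine ODE-type continuation of $H^{AB}$ along curves transverse to $\partial M$ with the projective invariance of the construction $\ze \mapsto L(\ze)$ and the smoothness of $\ze$ on the dense subset $M$, in the spirit of the techniques of \cite{CG1,CG2,CG3}.

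For (b), since $g$ is a nondegenerate metric on $M$, the tensor $\ze = \tau^{-1}g^{-1}$ is nondegenerate on $M$, so $\mathcal{D}(\ze) \cap M = \varnothing$ and hence $\mathcal{D}(\ze) \subseteq \partial M$. Conversely, if there existed $p \in \partial M \setminus \mathcal{D}(\ze)$, then $\ze$ would be nondegenerate on some neighborhood $U \subseteq \overline{M}$ of $p$, and Proposition \ref{4.9} would give $g^{ab} = \operatorname{sgn}(\bdet(\ze))\bdet(\ze)\,\ze^{ab}$ on $U \cap M$. Since both $\bdet(\ze)$ and $\ze$ extend smoothly and nondegenerately to $U$ by (a), this formula would define a smooth nondegenerate metric on all of $U$, and consequently $\nabla^g$ would extend smoothly to $U$, contradicting the hypothesis. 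Hence $\mathcal{D}(\ze) = \partial M$.

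With (a) and (b) in hand, the rank hypothesis $\mathcal{R}(L(\ze)) = n$ on $\overline{M}$ places us in the setting of Theorem \ref{4.11}. Orientability of $\overline{M}$ forces orientability of $\partial M = \mathcal{D}(\ze)$, so Theorem \ref{4.11} produces a global canonical weight-$1$ density $\sigma$ on $\overline{M}$ satisfying $\sigma^2 = \pm\bdet(\ze) = \pm\tau$ (the second equality using $g = \operatorname{sgn}(\bdet(\ze))\bdet(\ze)\ze = \tau\ze$ from Proposition \ref{4.9}), with $\mathcal{Z}(\sigma) = \partial M$ and $\nabla\sigma$ nonvanishing along $\partial M$, so that $\sigma$ is a defining density of weight $1$ for $\partial M$. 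Since $\nabla^g$ preserves $\tau$ and locally $|\sigma| = \sqrt{|\tau|}$, the connection $\nabla^g$ preserves $\sigma$ (up to a local sign choice) on $M$, whence $\nabla^g = \nabla^\sigma$. Proposition \ref{2.2} applied with $\alpha = 1$ then yields that $\nabla^g$ is projectively compact of order $1$.
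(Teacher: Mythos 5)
Your proposal is correct and follows essentially the same route as the paper: extend $\ze$ to $\overline{M}$ via the prolonged system \eqref{EMconn}, note $\mathcal{D}(\ze)=\partial M$ since otherwise $\nabla^g$ would extend to a boundary neighborhood, and conclude projective compactness of order $1$ from the weight-$1$ defining density produced by Theorem \ref{4.11}/Theorem \ref{4.14} together with Proposition \ref{2.2}. The only remark is that the step you flag as the main technical obstacle is in fact immediate: \eqref{EMconn} exhibits $L(\ze)$ as a parallel section of a connection (the Eastwood--Matveev prolongation connection) that is smooth on all of $\overline{M}$ because $\bp$ extends, so the extension is just parallel transport along curves hitting the boundary, which is precisely how the paper argues.
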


\begin{proof}
$\tilde{\nabla} L(\tau^{-1}g^{ab}) = 0$ on $M$ for $\tilde{\nabla}$
  the ``Eastwood-Matveev connection'' given by the left hand side of
  \eqref{EMconn}. But this connection is well-defined on all of
  $\overline{M}$ so we can extend $L(\tau^{-1}g^{ab})$ to a smooth
  parallel section of all of $\overline{M}$. As the projecting
  component, it follows that $\ze^{ab}\colonequals\tau^{-1}g^{ab}$
  smoothly extends to all of $\overline{M}$ as well. Clearly,
  $\mathcal{D}(\ze^{ab})=\partial M$, since otherwise $\nabla^{g}$
  would extend smoothly to the boundary, a contradiction. Now the
  result follows from Theorem \ref{4.14}.
\end{proof}

\bigskip


\begin{thebibliography}{99}


\bibitem{BEG} T. N. Bailey, M. G. Eastwood, A. R. Gover. {\em Thomas's structure bundle for conformal, projective and related structures}. Rocky Mountain J. Math. 24 (1994), no. 4, 1191--1217. 

\bibitem{BL} H. Baum, F. Leitner. {\em The geometric structure of Lorentzian manifolds with twistor spinors in low dimension}. Dirac operators: yesterday and today, 229--240, Int. Press, Somerville, MA, 2005.

\bibitem{BM} A.V. Bolsinov, V.S. Matveev. {Local normal forms for geodesically equivalent pseudo-Riemannian metrics}. Trans. Amer. Math. Soc. 367 (2015), no. 9, 6719--6749. 

\bibitem{BMMR} A. V. Bolsinov, V. S. Matveev, T. Mettler, S. Rosemann. {\em Four-dimensional K�hler metrics admitting c-projective vector fields}. J. Math. Pures Appl. (9) 103 (2015), no. 3, 619--657.

\bibitem{BCEG} T. Branson, A. \v{C}ap, M. Eastwood, A. R.  Gover. {\em Prolongations of geometric overdetermined systems}. Internat. J. Math. 17 (2006), no. 6, 641--664. 

\bibitem{BDE} R. Bryant, M. Dunajski, M. Eastwood. {\em Metrisability of two-dimensional projective structures.} J. Differential Geom. 83 (2009), no. 3, 465--499. 

\bibitem{CD} D. Calderbank, T.\ Diemer. {\em Differential invariants and curved Bernstein-Gelfand-Gelfand sequences}. J. Reine Angew. Math. 537 (2001), 67--103.
  
\bibitem{CEMN} D. M. J. Calderbank, M. G. Eastwood, V. S. Matveev, K. Neusser.  {\em C-projective geometry}. Mem. Amer. Math. Soc., in Press.  \quad arXiv:1512.04516

\bibitem{CG4} A. \v{C}ap, A. R. Gover. {C-projective compactification; (quasi)--K\"{a}hler metrics and CR boundaries}, 2016. arXiv:1603.07039 

\bibitem{CG1} A. \v{C}ap, A. R. Gover.
{\em Projective compactifications and Einstein metrics}. J. Reine Angew. Math. 717 (2016), 47--75. 

\bibitem{CG2} A. \v{C}ap, A. R. Gover.
{\em Projective compactness and conformal boundaries}. Math. Ann. 366 (2016), no. 3-4, 1587--1620.

\bibitem{CG3} A. \v{C}ap, A. R. Gover.
{\em Scalar curvature and projective compactness}.  J. Geom. Phys. 98 (2015), 475--481.

\bibitem{CGH1} A. \v{C}ap, A. R. Gover, M. Hammerl.
{\em Projective BGG equations, algebraic sets, and compactifications of Einstein geometries.} J. Lond. Math. Soc. (2) 86 (2012), no. 2, 433--454. 

\bibitem{CGH2} A. \v{C}ap, A. R. Gover, M. Hammerl.
{\em Holonomy reductions of Cartan geometries and curved orbit decompositions,} Duke Math. J. 163 (2014), no. 5, 1035--1070.

\bibitem{CGM} A. \v{C}ap, A. R. Gover, H. R. Macbeth.
{\em Einstein metrics in projective geometry}. Geom. Dedicata 168 (2014), 235--244. 

\bibitem{CSlov} A. \v{C}ap, J. Slov\'{a}k.
{\em Parabolic geometries. I. Background and general theory.} Mathematical Surveys and Monographs, 154. American Mathematical Society, Providence, RI, 2009. x+628 pp.

\bibitem{CSS} A. \v{C}ap, J. Slov\'{a}k, V. Sou\v{c}ek. {\em Bernstein-Gelfand-Gelfand sequences}. Ann. of Math. (2) 154 (2001), no. 1, 97--113.

\bibitem{CSouc} A. \v{C}ap, V. Sou\v{c}ek.
{\em Curved Casimir operators and the BGG machinery.} SIGMA Symmetry Integrability Geom. Methods Appl. 3 (2007), Paper 111, 17 pp. 

\bibitem{CurGov} S. Curry, A. R. Gover.  {\em An introduction to conformal geometry and tractor calculus, with a view to applications in general relativity.} In: Asymptotic Analysis in General Relativity, Eds. Daud\'e, H\"{a}fner, Nicolas, LMS, CUP, 2018. \quad arXiv:1412.7559

\bibitem{DE} M. Dunajski, M. Eastwood. {\em Metrisability of three-dimensional path geometries.} Eur. J. Math. 2 (2016), no. 3, 809--834.

\bibitem{Derd} A. Derdzinski.  {\em Zeros of conformal fields in any metric signature.} Classical Quantum Gravity 28 (2011), no. 7, 075011, 23 pp.

\bibitem{EM} M. Eastwood, V. Matveev
{\em Metric connections in projective differential geometry}. Symmetries and overdetermined systems of partial differential equations, 339--350, IMA Vol. Math. Appl., 144, Springer, New York, 2008.

\bibitem{FM} A. Fedorova, V.S. Matveev. {\em Degree of mobility for metrics of Lorentzian signature and parallel (0,2)-tensor fields on cone manifolds.} Proc. Lond. Math. Soc. (3) 108 (2014), no. 5, 1277--1312.

\bibitem{FG} C. Fefferman, C. R. Graham, {\em The ambient metric}. Annals of Mathematics Studies, 178. Princeton University Press, Princeton, NJ, 2012. x+113 pp. 

\bibitem{GLW} A. R. Gover, E. Latini, A. Waldron. {\em  Poincar�-Einstein holography for forms via conformal geometry in the bulk.} Mem. Amer. Math. Soc. 235 (2015), no. 1106, vi+95 pp. 

\bibitem{GLW2} A. R. Gover, E. Latini, A. Waldron. {\em Metric projective geometry, BGG detour complexes and partially massless gauge theories}.  Comm. Math. Phys. 341 (2016), no. 2, 667--697.

\bibitem{GH} A.R. Gover, H.R. Macbeth. {\em Detecting Einstein geodesics: Einstein metrics in projective and conformal geometry}, Differential Geometry and Its Applications, 33 (2014), 44--69 
  
\bibitem{GM} A.R. Gover, V. Matveev. {\em Projectively related metrics, Weyl nullity, and metric projectively invariant equations.} Proc. Lond. Math. Soc. (3) 114 (2017), no. 2, 242--292. 

\bibitem{GPW} A.R. Gover, R. Panai, T. Willse. {\em Nearly K�hler geometry and $(2,3,5)$-distributions via projective holonomy}. Indiana Univ. Math. J. 66 No. 4 (2017), 1351--1416

\bibitem{GZ} C.R. Graham, M. Zworski, {\em Scattering matrix in conformal geometry}. Invent. Math. 152 (2003), no. 1, 89--118.

\bibitem{Jez} J. Jezierski. {\em Conformal Yano-Killing tensors in anti-de Sitter spacetime}. Classical Quantum Gravity 25 (2008), no. 6, 065010, 17 pp.

\bibitem{Juh}A. Juhl.  Families of conformally covariant differential operators, Q-curvature and holography. Progress in Mathematics, 275. Birkh�user Verlag, Basel, 2009. xiv+488 pp. 

\bibitem{Z3} J. Jung, S. Zelditch. {\em Number of nodal domains and singular points of eigenfunctions of negatively curved surfaces with an isometric involution}. J. Differential Geom. 102 (2016), no. 1, 37--66.

\bibitem{KW1}J.L. Kazdan, F.W. Warner. {\em Existence and conformal deformation of metrics with prescribed Gaussian and scalar curvatures.} Ann. of Math. (2) 101 (1975), 317--331.

\bibitem{KW2}J.L. Kazdan, F.W. Warner.{\em Scalar curvature and conformal deformation of Riemannian structure.} J. Differential Geometry 10 (1975), 113--134.

\bibitem{KM} B. Kruglikov, V.S. Matveev. {\em The geodesic flow of a generic metric does not admit nontrivial integrals polynomial in momenta}. Nonlinearity 29 (2016), no. 6, 1755--1768.

\bibitem{Leit2} F. Leitner. {\em About twistor spinors with zero in Lorentzian geometry}. SIGMA Symmetry Integrability Geom. Methods Appl. 5 (2009), Paper 079, 12 pp.

\bibitem{Lis} A. Lischewski, {\em The zero set of a twistor spinor in any metric signature}. Rend. Circ. Mat. Palermo (2) 64 (2015), no. 2, 177--201.

\bibitem{Mar} T. Marugame, {\em  Volume Renormalization for the Blaschke Metric on Strictly Convex Domains}. J. Geom. Anal. 28 (2018), no. 1, 510--545.
  

  
\bibitem{Mat2} V.S. Matveev. {\em Geodesically equivalent metrics in general relativity.} J. Geom. Phys. 62 (2012), no. 3, 675--691.

\bibitem{MR} V.S. Matveev, S. Rosemann. {\em The degree of mobility of Einstein metrics}. J. Geom. Phys. 99 (2016), 42--56.

\bibitem{MM} R. R. Mazzeo, R. B. Melrose.{\em Meromorphic extension of the resolvent on complete spaces with asymptotically constant negative curvature.} J. Funct. Anal. 75 (1987), no. 2, 260--310.

\bibitem{Mettler} T. Mettler. {\em Weyl metrisability of two-dimensional projective structures.} Math. Proc. Cambridge Philos. Soc. 156 (2014), no. 1, 99--113.

\bibitem{Mettler2} T. Mettler. {\em On Kähler metrisability of two-dimensional complex projective structures.} Monatsh. Math. 174 (2014), no. 4, 599--616.

\bibitem{Mik} J. Mikes. {\em Geodesic mappings of affine-connected and Riemannian spaces}. Jour. Math. Sci. 78 (1996) 311--333.

\bibitem{Pap} G. Papadopoulos. {\em Killing-Yano equations and G structures}. Classical Quantum Gravity 25 (2008), no. 10, 105016, 8 pp.

\bibitem{Sinj} N. S. Sinjukov. {\em Geodesic mappings of Riemannian spaces}. (Russian) [Geodesic mappings of Riemannian spaces] ``Nauka'', Moscow, 1979.

\bibitem{Z1} S. Zelditch. {\em Local and global analysis of eigenfunctions on Riemannian manifolds}. Handbook of geometric analysis. No. 1, 545--658, Adv. Lect. Math. (ALM), 7, Int. Press, Somerville, MA, 2008.

\bibitem{Z2} S. Zelditch. {\em Eigenfunctions and nodal sets}. Surveys in differential geometry. Geometry and topology, 237-308, Surv. Differ. Geom., 18, Int. Press, Somerville, MA, 2013.

\end{thebibliography}
\end{document}